\let\itemref\ref
\newtheoremstyle{dtheoremnopar}{3 mm}{1 mm}{\itshape}{}{\bfseries}{.}{ }
{\thmname{#1}\thmnumber{ #2}\thmnote{ \mdseries(#3)\bfseries}}
\theoremstyle{dtheoremnopar}
\newcounter{theoremx}
\newtheorem{theoremalpha}[theoremx]{Theorem}
\newcommand{\tref}[1]{\ref{#1}} 
\theoremstyle{ddef}
\newtheorem*{definition*}{Definition}
\newcommand{\ZZ}{\mathbb{Z}}
\newcommand\xdashrightarrow[1]{\overset{#1}{\dashrightarrow}}
\DeclareMathOperator{\id}{id}
\newcommand{\inj}{\hookrightarrow}
\newcommand{\surj}{\twoheadrightarrow}
\DeclareMathOperator{\image}{im}
\DeclareMathOperator{\kar}{char}
\DeclareMathOperator{\rk}{rk}
\DeclareMathOperator{\Ext}{Ext}
\DeclareMathOperator{\Tor}{Tor}
\DeclareMathOperator{\Hom}{Hom}
\newcommand{\sheafHom}{\operatorname{\mathcal{H}om}}
\newcommand{\sheafExt}{\operatorname{\mathcal{E}xt}}
\DeclareMathOperator{\Gal}{Gal} 
\DeclareMathOperator{\Sym}{Sym}
\newcommand\sE{\mathcal{E}}
\newcommand\sF{\mathcal{F}}
\newcommand\sG{\mathcal{G}}
\newcommand\sL{\mathcal{L}}
\newcommand\sN{\mathcal{N}}
\newcommand\sO{\mathcal{O}}
\newcommand\im{\mathfrak{m}} 
\newcommand\A[1]{\mathbb{A}^{#1}}    
\DeclareMathOperator{\Spec}{Spec}
\newcommand\red{\mathrm{red}}   
\newcommand{\etale}{\'{e}tale}
\DeclareMathOperator{\stab}{stab}
\DeclareMathOperator{\Mor}{Mor}
\DeclareMathOperator{\Exal}{Exal}
\newcommand{\gen}{{\mathrm{gen}}} 
\newcommand{\rig}{{\mathrm{rig}}}
\newcommand{\cs}{{\mathrm{cs}}} 
\newcommand{\can}{{\mathrm{can}}} 
\newcommand{\stG}{\mathcal{G}}
\newcommand{\VV}{\mathbb{V}} 
\newcommand{\QCoh}{\mathsf{QCoh}} 
\newcommand{\VB}{\mathsf{VB}} 
\newcommand{\B}{\mathrm{B}} 
\renewcommand{\H}{{\operatorname{H}}} 
\newcommand{\Gm}{\mathbb{G}_m}
\newcommand{\Gmu}{\pmb{\mu}} 
\newcommand{\Galpha}{\pmb{\alpha}}
\DeclareMathOperator{\K}{K} 
\newcommand{\triv}{{\mathrm{t}}} 
\newcommand{\nt}{{\mathrm{nt}}} 
\newcommand{\D}{\mathsf{D}} 
\newcommand{\Dqc}{\mathsf{D}_\mathrm{qc}} 
\newcommand{\Rd}{\mathsf{R}}
\newcommand{\Ld}{\mathsf{L}}
\newcommand{\sptag}[1]{\href{http://stacks.math.columbia.edu/tag/#1}{Tag~#1}}
\begin{document}

\title[Functorial destackification]
{Functorial destackification and weak factorization of orbifolds}
\author{Daniel Bergh}
\address{Department of Mathematical Sciences\\
         Copenhagen University\\
         Universitetspark~5\\
         2100~Copenhagen~\O{}\\
         Denmark}
\email{dbergh@gmail.com}
\author{David Rydh}
\address{KTH Royal Institute of Technology\\
         Department of Mathematics\\
         SE\nobreakdash-100~44~Stockholm\\
         Sweden}
\email{dary@math.kth.se}
\date{2019-05-02}
\subjclass[2010]{14A20, 14E15}
\keywords{destackification, tame stack, weak factorization, resolution of singularities}
\hyphenation{Buon-erba}

\begin{abstract}
Let $X$ be a smooth and tame stack with finite inertia.  We prove that there is a
functorial sequence of blow-ups with smooth centers after which the stabilizers
of $X$ become abelian. Using this result, we can extend the destackification
results of the first author to any smooth tame stack. We give applications to
resolution of tame quotient singularities, prime-to-$\ell$ alterations of
singularities and weak factorization of Deligne--Mumford stacks. We also extend
the abelianization result to infinite stabilizers in characteristic zero,
generalizing earlier work of Reichstein--Youssin.
\end{abstract}

\maketitle


\begin{section}{Introduction}
Before carefully stating our main results and discussing some applications,
let us first briefly recall the notion of destackification from
\cite{bergh_functorial-destackification} and discuss it in a less formal setting.
For simplicity, we fix a field~$k$ of arbitrary characteristic and regard only separated,
tame Deligne--Mumford stacks of finite type over $k$ in this first part of the introduction.
A typical example of such a stack would be the quotient stack $[X/G]$ where
$G$ is a finite group with order invertible in $k$, and $X$ is a $G$-variety over~$k$.

A \emph{modification} of an algebraic space $X$ is a proper birational morphism
$X' \to X$ of algebraic spaces,
the typical example being a blow-up in a nowhere dense closed subspace.
Similarly, a \emph{stacky modification} of a stack $X$ is a proper
birational morphism $X' \to X$ of stacks.
Since blow-ups are local constructions, they make perfect sense also for algebraic stacks,
but there are also stacky modifications which are not representable.
A typical example of this is a \emph{root stack} in a Cartier divisor $D$ on~$X$.
By a \emph{stacky blow-up}, we will mean either an ordinary blow-up or a root stack in a divisor.

Under our assumptions, a stack $X$ admits a canonical morphism $X \to X_\cs$
to its coarse space.
This morphism is proper and initial in the category of algebraic spaces under~$X$.
Recall that even if $X$ is smooth, the same need not hold true for $X_\cs$ ---
in general $X_\cs$ will have finite tame quotient singularities.
A \emph{destackification} is a stacky modification $X' \to X$
such that $X'_\cs$ is smooth.
In particular, the induced morphism $X'_\cs \to X_\cs$ is a
resolution of singularities.

We are primarily interested in destackifications with additional nice properties.
Specifically, we ask our destackification $X' \to X$ to be the composition
of a sequence
\vspace{-2mm}
\begin{equation}
\label{eq-intro-destack}
X' = X_n \to \cdots \to X_0 = X
\end{equation}
of stacky blow-ups in smooth centers.
We also desire the canonical morphism $X' \to X'_\cs$ to be as nice as possible.
Recall that this morphism factors canonically through the rigidification $X' \to X'_\rig$.
The morphism $X' \to X'_\rig$ is a gerbe such that $X'_\rig$ contains an open dense
substack which is an algebraic space.
The destackifications we consider here have the additional property that $X'_\rig \to X'_\cs$ is an
\emph{iterated root stack} centered in a simple normal crossings divisor.

Destackification is analogous to resolution of singularities by a sequence of
smooth blow-ups. Instead of removing stackiness via
the singular coarse space $X\to X_\cs$, we have a sequence of smooth stacky
blow-ups and gerbes as in the previous paragraph
$X'_\cs \leftarrow X'_\rig \leftarrow X' \rightarrow \dots \rightarrow X$.
This helps us reduce questions about smooth stacks to questions about smooth
algebraic spaces if we understand how the question is affected by stacky blow-ups in smooth centers
and rigidifications.

In \cite{bergh_functorial-destackification} a destackification as in \eqref{eq-intro-destack}
is shown to exist provided that $X$ has abelian stabilizers.
This is not a severe restriction provided that the base field $k$ has characteristic zero.
Indeed, assume for simplicity that $X$ is generically an algebraic space.
A result by Reichstein--Youssin \cite[Theorem~4.1]{reichstein-youssin_ess-dim-resolution-thm} implies that if the stacky locus is contained in a simple normal
crossings divisor, then the stabilizers are automatically abelian.
Since we are in characteristic zero,
we can arrange for this to happen by using embedded resolution of singularities.
In positive characteristic, however, we do not have resolution of singularities and the abelian result is too restrictive for applications such as \cite{bls2016}.

Our first main result (Theorem~\ref{TM:divisorialification}) in this article is that we can abelianize the stabilizers
without using embedded resolution of singularities.
This gives us a destackification theorem (Theorem~\ref{TM:destackification}) which is more powerful than
the corresponding result \cite[Theorem~1.2]{bergh_functorial-destackification}.
In particular, it holds in positive characteristic and in the relative setting.
Our second main result (Theorem~\ref{TM:unipotent-stabs}) is a generalization of the result by
Reichstein--Youssin mentioned in the previous paragraph.
This gives the structure of the stabilizers of a smooth algebraic stack with
stabilizers of arbitrary dimension,
provided that the stacky locus is contained in a simple normal crossings divisor.

As reflected in the title of this article, we also intended to include a theorem on weak factorization
of orbifolds, i.e., Deligne--Mumford stacks which generically have trivial stabilizers, in characteristic zero.
This result is briefly mentioned in \cite{bergh_functorial-destackification} as Corollary~1.5.
Since then, a stronger result by Harper \cite{harper_factorization} on weak factorization of Deligne--Mumford stacks based
on destackification has appeared,
which makes our result less relevant.
Instead of discussing our previous result further,
we simply restate Harper's theorem as Theorem~\ref{TM:HARPER}.
We also give a minor simplification of its proof.
We decided to keep the title since this article has existed as a draft for
quite some time which is already referenced by other work~\cite{bls2016, kresch_restricted-destackification}.

\subsection*{Main results}
We will now make more detailed statements of our main results.
From now on, we will use the definitions of algebraic stacks and algebraic spaces given in
the stacks project \cite[\sptag{026O}, \sptag{025Y}]{stacks-project}.
In particular, we will not assume that our stacks are Deligne--Mumford stacks and we
will always state any separatedness conditions explicitly.  

We will usually work with stacks relative an arbitrary base and it will sometimes be convenient to let this
base be an algebraic stack.
To avoid tedious repetitions, we encapsulate some of our standard assumptions in the notion
of a \emph{standard pair} $(X, D)$ over the base~$S$.
This is an algebraic stack $X$ which is smooth and of finite presentation over~$S$,
together with an ordered simple normal crossings divisor on~$X$ (see Definition~\ref{def-snc}).

Our algebraic stacks will usually have \emph{finite inertia}.
Recall that this assumption implies that the stack in question has a coarse space.
Furthermore, the stabilizers will usually be tame in the sense that they are finite linearly reductive group schemes. 
An algebraic stack with finite inertia and tame stabilizers is what is called a \emph{tame Artin stack}
by Abramovich--Olsson--Vistoli in~\cite{abramovich_olsson_vistoli_tame_stacks}.
As we occasionally work with more general stacks in this article
(Theorem~\ref{TM:unipotent-stabs}), we do not assume that our standard pairs
are tame stacks as in
\cite[Definition 2.1]{bergh_functorial-destackification}.

Our first main theorem is a generalization of \cite[Theorem~1.1]{bergh_functorial-destackification}
to the situation with arbitrary tame stabilizers.
A more general formulation is given as Theorem~\ref{T:divisorialification}.
See Section~\ref{S:stacky-blowups} for details on standard pairs,
rigidifications, blow-up sequences and functoriality.

\begin{theoremalpha}[Divisorialification]\label{TM:divisorialification}
Let $S$ be an algebraic space and let $(X, D)$ be a standard pair over~$S$.
Assume that $X$ has finite inertia, tame stabilizers and relative dimension $\leq d$ over~$S$.
Then there exists a smooth ordinary blow-up sequence
\[
\Pi\colon (Y, E) = (X_n, D_n) \to \cdots \to (X_0, D_0) = (X, D)
\]
with $n \leq d$, such that $Y$ admits a rigidification
$r\colon Y \to Y_\rig$
and the geometric stabilizers of $Y_\rig$ are diagonalizable and
trivial outside $r(E)$.

The construction is functorial with respect to arbitrary base change $S' \to S$ and
with respect to smooth morphisms~$X' \to X$ which are either stabilizer preserving or tame gerbes.
\end{theoremalpha}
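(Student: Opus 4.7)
The plan is to proceed by induction on the relative dimension $d$ of $X$ over $S$. When $d = 0$, the finite-inertia hypothesis forces $X$ to be a gerbe over its rigidification (which is \etale over $S$); the rigidified stabilizers are then trivial, so the empty blow-up sequence suffices. For the inductive step $d \geq 1$, I build one canonical smooth blow-up and then appeal to the hypothesis in relative dimension $\leq d-1$ on the new exceptional divisor.

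To find the canonical center, I work locally via the Abramovich--Olsson--Vistoli local structure theorem for tame stacks, which \etale-locally presents $X$ as $[U/G]$ with $U$ smooth affine and $G$ finite linearly reductive. At a geometric point $x$, I decompose the $G_x$-representation on $T_xX$ into: (a) the $G_x$-fixed directions, (b) the directions on which $G_x$ acts through a single character whose vanishing locus is a component of $D$ through $x$, and (c) everything else (capturing both non-abelian content and non-divisorially aligned characters). I define an upper semi-continuous \emph{divisorial defect} $\delta(x)$ that vanishes exactly when, locally at $x$, $(X,D)$ has the form demanded by the conclusion, i.e.\ when, modulo a central generic stabilizer, $G_x$ is diagonalizable and acts non-trivially only along components of $D$ through $x$. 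Let $Z\subset X$ be the reduced locus of maximal defect. Because fixed loci of tame group actions on smooth stacks are smooth and, in the standard-pair setup, meet $D$ transversally, $Z$ is smooth and has snc with $D$, giving a canonical smooth center.

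Blow up $Z$ and add the exceptional divisor $E_1$ to the boundary to obtain $(X_1,D_1)$. The geometric effect is that the $G_x$-action on the normal bundle to $Z$ becomes, on the generic fibers of $E_1 \to Z$, a one-dimensional representation: the stabilizer of a generic point of $E_1$ contains a central subgroup acting by a single character precisely along $E_1$, which has just been added to the boundary. Hence the defect strictly drops on $X_1 \setminus E_1$, and the remaining non-divisorial behaviour is confined to $E_1$, which is itself a standard pair of relative dimension $\leq d-1$ with the induced divisor $D_1|_{E_1}$. Apply the inductive hypothesis to $(E_1,D_1|_{E_1})$ to obtain a blow-up sequence of length $\leq d-1$ divisorialifying it; since each such blow-up has smooth center inside $E_1$ with snc with the boundary, it lifts canonically to $X_1$ with the same effect, producing a total sequence of length $\leq d$ that achieves the conclusion on $X$.

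The main technical obstacle, and the genuinely new content beyond the abelian-stabilizer case, is that $G_x$ need not be commutative: the defect $\delta$ must simultaneously detect failure of diagonalizability of the rigidified stabilizer and failure of the non-generic stabilizer to be supported on $D$, and the blow-up of the maximal-defect stratum must push the non-abelian part onto $E_1$ in such a way that it appears there as a central subgroup of the generic stabilizer of $E_1$, so that an intermediate rigidification step removes it and the inductive hypothesis applies in a strictly smaller problem. Functoriality with respect to base change $S' \to S$ and with respect to smooth stabilizer-preserving or tame-gerbe morphisms $X' \to X$ is then automatic: $\delta$ and hence $Z$ are defined pointwise from the $G_x$-representation on $T_xX$, which such morphisms preserve (up to the permitted modifications), so the entire blow-up sequence is intrinsic.
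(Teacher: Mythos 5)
Your invariant-based strategy (a pointwise ``defect'' measuring failure of divisoriality, blow up its maximal locus, iterate) is the same basic idea as the paper's, which defines the \emph{divisorial index} at $x$ as the codimension of stackiness of $\tau_{D/S}\colon X\to \B\Gm^n\times S$ at $x$, computed via the relevant rank of the cotangent complex. But your inductive scheme has a genuine gap. After blowing up the locus $Z$ of maximal defect $m$, the defect on $X_1\smallsetminus E_1\cong X\smallsetminus Z$ is \emph{unchanged}, not zero: it merely takes values in $[0,m-1]$ there. So the ``remaining non-divisorial behaviour'' is not confined to $E_1$, and applying the inductive hypothesis only to $(E_1,D_1|_{E_1})$ leaves untreated all points of $X_1\smallsetminus E_1$ with positive defect. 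Relatedly, blowing up a center $W\subseteq E_1$ \emph{inside} $X_1$ is not the operation your inductive hypothesis produces (which lives on the $(d-1)$-dimensional stack $E_1$), and its effect on the defect at points of $X_1$ off the strict transform of $E_1$ is not controlled by that hypothesis. The correct induction is not on the relative dimension of the ambient stack but on the \emph{maximal value} of the invariant over all of $X_i$: one shows the invariant is bounded by $d$ and that blowing up the full maximal locus in $X_i$ strictly decreases the maximum, so the process terminates in at most $d$ steps.

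Two further points need more than you give them. First, the strict decrease of the invariant must be verified at \emph{every} point of the exceptional divisor, not just generic ones; the paper does this by a cotangent-complex computation showing that at any $x\in E$ the new index equals the relevant rank of $\Omega_{E/Z}$, a locally free sheaf of rank $m-1$, hence is at most $m-1$. Your heuristic about a central character appearing ``on the generic fibers of $E_1\to Z$'' does not rule out larger defect at special points of $E_1$. Second, for functoriality under \emph{arbitrary} base change $S'\to S$ you need the maximal locus to carry a unique closed substack structure smooth over $S$, including over non-reduced bases; this requires a deformation-theoretic argument (vanishing of the relevant $\Ext$ groups using tameness), not just the observation that the defect is defined pointwise from the tangent representation.
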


As a consequence of Theorem~\ref{TM:divisorialification},
we get the following stronger version of \cite[Theorem~1.2]{bergh_functorial-destackification}.
A more general formulation is given as Theorem~\ref{T:destackification}.
See Section~\ref{S:stacky-blowups} for details on standard pairs,
rigidifications, root stacks, stacky blow-up sequences and functoriality.

\begin{theoremalpha}[Destackification]\label{TM:destackification}
Let $S$ be a quasi-compact algebraic space and let $(X, D)$ be a standard pair over~$S$.
Assume that $X$ has finite inertia and tame stabilizers.
Then there exists a smooth stacky blow-up sequence
\begin{equation}
\label{eqn-destack}
\Pi\colon (Y, E) = (X_n, D_n) \to \cdots \to (X_0, D_0) = (X, D)
\end{equation}
such that the coarse space $(Y_\cs, E_\cs)$ is a standard pair
over~$S$.
Moreover, the stack $Y$ admits a rigidification $Y \to Y_\rig$
such that the canonical morphism $Y_\rig \to Y_\cs$
is an iterated root stack in $E_\cs$.

The construction is functorial with respect to arbitrary base change~$S' \to S$
and with respect to smooth morphisms~$X' \to X$ which are either stabilizer preserving or tame gerbes.
\end{theoremalpha}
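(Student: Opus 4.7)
The plan is to reduce to the abelian case already handled by \cite[Theorem~1.2]{bergh_functorial-destackification} by first applying the divisorialification theorem. First, I would apply Theorem~\ref{TM:divisorialification} to the standard pair $(X, D)$ over $S$. This yields a smooth ordinary blow-up sequence
\[
\Pi_1 \colon (X', D') = (X_m, D_m) \to \cdots \to (X_0, D_0) = (X, D),
\]
with $X'$ admitting a rigidification $r \colon X' \to X'_\rig$ whose geometric stabilizers are diagonalizable and trivial outside $r(E')$, where $E' \subseteq D'$ records the exceptional locus of~$\Pi_1$.

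The key observation is that once $X'_\rig$ has diagonalizable (in particular abelian) geometric stabilizers, the stack $X'$ itself has abelian geometric stabilizers. Indeed, the rigidification morphism $X' \to X'_\rig$ is a gerbe banded by a finite linearly reductive group scheme sitting centrally in the inertia of $X'$. Hence the geometric stabilizers of $X'$ are central extensions of the (abelian) stabilizers of $X'_\rig$ by a (diagonalizable) central subgroup, and are therefore abelian. Since $X'$ still has finite inertia and tame stabilizers (blow-ups in smooth centers preserve these conditions), $(X', D')$ is a standard pair over the quasi-compact base $S$ satisfying the hypotheses of the abelian destackification theorem of the first author.

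Second, I would apply \cite[Theorem~1.2]{bergh_functorial-destackification} to $(X', D')$ over $S$. This produces a smooth stacky blow-up sequence
\[
\Pi_2 \colon (Y, E) = (X'_{n-m}, D'_{n-m}) \to \cdots \to (X'_0, D'_0) = (X', D')
\]
such that $(Y_\cs, E_\cs)$ is a standard pair over $S$ and such that $Y$ admits a rigidification $Y \to Y_\rig$ with $Y_\rig \to Y_\cs$ an iterated root stack in $E_\cs$. The desired blow-up sequence \eqref{eqn-destack} is then the concatenation $\Pi \colon (Y, E) \xrightarrow{\Pi_2} (X', D') \xrightarrow{\Pi_1} (X, D)$.

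Functoriality then follows formally: each of Theorem~\ref{TM:divisorialification} and \cite[Theorem~1.2]{bergh_functorial-destackification} is functorial with respect to arbitrary base change $S' \to S$ and with respect to smooth morphisms $X' \to X$ that are stabilizer preserving or tame gerbes, and both classes of morphisms are stable under pullback along the smooth blow-ups and root stacks occurring in $\Pi_1$ and $\Pi_2$. Therefore the composition $\Pi$ inherits the required functoriality. The main obstacle is really packaged into Theorem~\ref{TM:divisorialification}: once we know how to abelianize the stabilizers while staying within the class of smooth ordinary blow-ups and while preserving the SNC structure, the destackification statement is a straightforward concatenation with the already-known abelian case. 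A secondary point to verify carefully is that the ``trivial outside $r(E')$'' conclusion of Theorem~\ref{TM:divisorialification} interacts well with the SNC divisor structure used by the abelian destackification, so that $(X', D')$ really is a valid input for the second step.
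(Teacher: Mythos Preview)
Your argument contains a genuine gap: the claim that $X'$ itself has abelian stabilizers after divisorialification is false. The assertion that the rigidification $X'\to X'_\rig$ is ``banded by a finite linearly reductive group scheme sitting centrally in the inertia'' is incorrect: the subgroup $I_{X'/X'_\rig}\subset I_{X'}$ defining the rigidification is normal but in general neither central nor abelian, so the stabilizers of $X'$ need not be abelian extensions of those of $X'_\rig$. A simple counterexample is $X=\B G$ over $S=\Spec k$ with $G$ any finite non-abelian tame group (e.g.\ $G=S_3$ in characteristic $\neq 2,3$). Since $X$ is a gerbe, the codimension of stackiness is identically zero (Lemma~\ref{lem-relevant-gerbe}), so divisorialification is trivial and $X'=\B G$. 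The total rigidification is $X'_\rig=\Spec k$, whose stabilizers are trivially diagonalizable, yet $X'$ retains its non-abelian stabilizer $G$. Hence you cannot feed $(X',D')$ into \cite[Theorem~1.2]{bergh_functorial-destackification}, whose hypothesis (abelian/diagonalizable stabilizers) fails.

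The paper avoids this by applying the earlier destackification algorithm (Algorithm~E of \cite{bergh_functorial-destackification}) to the \emph{rigidification} $(X'_\rig,D'_\rig)$, which by Proposition~\ref{prop-divisorial} genuinely has diagonalizable stabilizers, and then pulling the resulting stacky blow-up sequence back along the tame gerbe $X'\to X'_\rig$. This is precisely why functoriality with respect to tame gerbes enters the statement and must be verified: it is needed to transport the sequence from $X'_\rig$ to $X'$, not merely for external bookkeeping. So the ``secondary point'' you flag is actually essential, while your ``key observation'' that $X'$ becomes abelian is the step that fails.
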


\subsection*{Amplifications to destackification}
There are three important amplifications of the destackification theorem.

\begin{remark}[(Restricted root operations)]
Kresch~\cite{kresch_restricted-destackification} has recently proven the
following strengthening.
Let $S$ and $(X, D)$ be as in Theorem~\ref{TM:destackification}.
Furthermore, let $p$ be a prime and assume that $p$ does not divide the order of the
stabilizer at any point of $X$.
Then a modification of the construction in Theorem~\ref{TM:destackification} yields a stacky blow-up sequence in
which all root stacks have order prime to~$p$.
In particular, if $S$ is a field of characteristic~$p$ and we start with a tame Deligne--Mumford stack $X$, then all the stacks $X_i$ are tame Deligne--Mumford stacks.
The modified construction enjoys the same functorial properties as the construction in~Theorem~\ref{TM:destackification}.
\end{remark}

\begin{remark}[(Relative destackification)]
In Theorem~\ref{T:destackification} we formulate a relative version of the
destackification theorem. Instead of removing all the stackiness of $X$, we
only remove the stackiness relative to an arbitrary target stack $T$. The end
result is thus a stack $Y_{\cs/T}$ which is representable over $T$. This
amplification was inspired by Harper's proof of weak factorization for
Deligne--Mumford stacks, see \cite[Corollary~3.19.1]{harper_factorization}.
\end{remark}

\begin{remark}[(Projectivity)]\label{R:projectivity}
A stacky blow-up is \emph{projective}~\cite{kresch_geometry-of-DM-stacks} in
the sense that it is a relative global quotient stack with a projective
relative coarse space. To be precise, if $\Pi\colon (Y, E) = (X_n, D_n) \to
\cdots \to (X_0, D_0) = (X, D)$ is a stacky blow-up sequence and
$E_1,E_2,\dots,E_n\subseteq Y$ are the exceptional divisors of $\Pi$, then the
induced morphism $Y\to \B \Gm^n \times X$ is representable and there are
positive integers $N_1\gg N_2 \gg \dots \gg N_n$ such that the line bundle
$\sO(N_1E_1+N_2E_2+\dots+N_nE_n)$ descends to a relatively ample line bundle on
the relative coarse space $Y_{\cs/X}$.  It follows that $Y_\cs\to X_\cs$ is
projective (if $S$ is quasi-separated). In fact one can show that each
morphism $X_{i+1,\cs}\to X_{i,\cs}$ is a blow-up.
\end{remark}

\subsection*{Stabilizers of positive dimension}

Although not needed to prove the theorems above,
similar methods give the following theorem of independent interest, generalizing~\cite[Theorem~4.1]{reichstein-youssin_ess-dim-resolution-thm}.

\begin{theoremalpha}\label{TM:unipotent-stabs}
Let $(X, D)$ be a standard pair over~$\Spec k$ for an arbitrary field~$k$.
Assume that $X$ has separated diagonal
and that $X \smallsetminus D$ is an algebraic space.
Let $x\colon \Spec \overline{K} \to X$ be a geometric point in $|D|$.
Then the stabilizer of $x$ is a semi-direct product $U \rtimes \Delta$
where $\Delta$ is diagonalizable and $U$ is unipotent.
Furthermore, if $n$ components of $D$ passes through $x$, then the induced
representation $\Delta \to \Gm^n$ is faithful.
\end{theoremalpha}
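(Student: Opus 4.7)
My plan is to analyze the stabilizer group scheme $G$ of $x$, working over $\overline K$, via the formal completion $\hat R$ of $\sO_{X,x}$. Smoothness yields $\hat R \cong \overline K[[t_1, \ldots, t_d]]$, and the ordered SNC hypothesis with $n$ components through $x$ forces $G$ to preserve each ideal $(t_i)$ for $i \leq n$ (otherwise distinct ordered components would be identified in the quotient). I will extract characters from the $G$-action on the conormals $(t_i)/(t_i)\im$, define $U$ as their common kernel, and use freeness of $G$ on $X \smallsetminus D$ to prove $U$ is unipotent.

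For $i \leq n$, the relation $\phi(t_i) = u_i(\phi)\, t_i$ with $u_i(\phi) \in \hat R^\times$ defines a homomorphism $\chi_i \colon G \to \Gm$ by $\chi_i(\phi) = u_i(\phi)(0)$, multiplicativity following from $\phi$ fixing the origin. Set $\chi = (\chi_1, \ldots, \chi_n) \colon G \to \Gm^n$ and $U = \ker \chi$; then $G/U$ embeds in $\Gm^n$ and is therefore diagonalizable.

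The main technical step is to show $U$ is unipotent. I plan to use the structural fact that an affine group scheme of finite type over an algebraically closed field is unipotent iff it contains no non-trivial subgroup of multiplicative type; it thus suffices to show that every diagonalizable subgroup $\Delta' \subseteq U$ is trivial. Given such a $\Delta'$, linear reductivity lifts the weight decomposition of $\im/\im^2$ to a system of coordinates $\tau_1, \ldots, \tau_d$ for $\hat R$ in which each $\tau_i$ is a $\Delta'$-weight vector and $(\tau_i) = (t_i)$ for $i \leq n$: because $\Delta' \subseteq U$, the character of $\Delta'$ on $(t_i)/(t_i)\im$ is trivial, so the weight-zero component of $t_i$ (which lies in $(t_i)$) generates $(t_i)$ by Nakayama. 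In these coordinates $\phi \in \Delta'$ acts trivially on $\tau_i$ for $i \leq n$ and by a character $\mu_j(\phi)$ on $\tau_j$ for $j > n$. The prime $\mathfrak p = (\tau_{n+1}, \ldots, \tau_d)$ avoids $V(t_1 \cdots t_n)$, is $\Delta'$-stable, and has residue field on which $\Delta'$ acts trivially, placing $\phi$ in the stabilizer of a geometric point of $X \smallsetminus D$. Freeness forces $\phi = 1$, and hence $\Delta' = 1$.

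The hard part will be precisely this step: combining the formal equivariant linearization with the correct choice of test point — the generic point of the deep stratum $D_1 \cap \cdots \cap D_n$, since no closed $\overline K$-point of the formal neighborhood lies outside the origin — is what translates the global freeness hypothesis into rigidity for $U$. Once $U$ is known to be unipotent, the extension $1 \to U \to G \to G/U \to 1$ of a diagonalizable group by a unipotent group splits by the vanishing of $\H^1$ of a group of multiplicative type in a unipotent one, yielding $G = U \rtimes \Delta$ with $\Delta \xrightarrow{\sim} G/U$. The composition $\Delta \hookrightarrow G \twoheadrightarrow G/U \hookrightarrow \Gm^n$ is then the promised faithful representation, which coincides with the induced action of $\Delta$ on $\bigoplus_{i=1}^n N_{D_i/X}|_x$.
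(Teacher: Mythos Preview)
Your overall strategy coincides with the paper's: define $U$ as the kernel of the representation $\chi\colon\stab(x)\to\Gm^n$ coming from the conormal lines of the $D^i$, show that $U$ contains no non-trivial multiplicative subgroup (hence is unipotent), and then split the extension of a diagonalizable group by a unipotent one over $\overline K$ (SGA3, Expos\'e~17). The paper restricts to finite subgroups $\Gmu_r\subseteq U$, which suffices; you allow arbitrary diagonalizable $\Delta'$, which is fine too.

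The substantive gap is in your setup. You assert that the stabilizer $G$ acts by ring automorphisms on a formal local ring $\hat R\cong\overline K[[t_1,\dots,t_d]]$ of $X$ at $x$, and later that $\Delta'\subseteq G$ can be linearized in coordinates of $\hat R$. For a Deligne--Mumford stack this is essentially automatic, but the theorem is stated for smooth Artin stacks with merely separated diagonal and possibly positive-dimensional stabilizers. In that generality there is no a priori ``local ring of $X$ at $x$'' on which $G$ acts; producing a pointed stack $W$ together with a stabilizer preserving \'etale map $W\to[\A{m}/\Delta']$ and a flat \emph{representable} map $W\to X$ is exactly the content of a Luna-type slice theorem. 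The paper proves this (for finite linearly reductive $\Delta'$) via deformation theory and Artin approximation, and then uses it just as you propose: the $\Delta'$-action on the divisorial coordinates is trivial by construction of $U$, so the fixed locus is not contained in the divisor. The representability of $W\to X$ is also what lets one convert a $\Delta'$-fixed point of the slice off the divisor into a point of $X\smallsetminus D$ whose stabilizer contains $\Delta'$; working purely with a prime of $\hat R$, you still owe an explanation of why the image in $X$ carries $\Delta'$ in its stabilizer.

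A small slip: your test prime $\mathfrak p=(\tau_{n+1},\dots,\tau_d)$ is the generic point of the locus \emph{transversal} to the deep stratum $D^1\cap\cdots\cap D^n$, not of the deep stratum itself (the latter sits inside $D$). Your computations are consistent with the transversal reading; only the label is off.
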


\subsection*{Weak factorization}
Relative destackification reduces the problem of weak factorization of birational
morphisms of smooth Deligne--Mumford stacks to weak factorization of
representable birational morphisms, which is proven by
Abramovich--Karu--Matsuki--W\l{}odarczyk~\cite{abramovich-etal_weak-factorization}.
See \cite[\S 5]{harper_factorization} for details. Note that the relative
formulation of the destackification theorem given in
Theorem~\ref{T:destackification} simplifies Step~III of Harper's proof.

\begin{theoremalpha}[{Harper~\cite[Theorem~1.1]{harper_factorization}}]
\label{TM:HARPER}
Let $\varphi\colon Y \dashrightarrow X$
be a proper birational map of smooth, separated Deligne--Mumford stacks of finite type
over a field $k$ of characteristic zero.
Let $U \subset X$ be an open substack over which $\varphi$ is an isomorphism.
Then $\varphi$ factors as a sequence
\begin{equation}
\label{eq-intro-wf}
Y = X_n \xdashrightarrow{\varphi_n} \cdots \xdashrightarrow{\varphi_1} X_0 =: X
\end{equation}
of proper birational maps $\varphi_i$ satisfying the following properties:
\begin{enumerate}
\item
\label{it-wf-stacky-blow-up}
either $\varphi_i$ or $\varphi_i^{-1}$ is a stacky blow-up centered in a smooth locus;
\item
\label{it-wf-invariant-locus}
each $\varphi_i$ is an isomorphism over $U$;
\item
\label{it-wf-normal-crossings}
if the exceptional loci $D_i := X_i\smallsetminus U$ are simple normal crossing divisors for $i = 0, n$,
then the same holds true for all $i$,
and all stacky blow-ups have centers which have normal crossings with the respective~$D_i$.
\item
\label{it-wf-defined}
there is an integer $n_0$ such that $X_i \dashrightarrow X$ is everywhere defined whenever $i \leq n_0$ and
$X_i \dashrightarrow Y$ is everywhere defined whenever $i \geq n_0$;
\end{enumerate}
\end{theoremalpha}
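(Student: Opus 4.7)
The plan is to follow Harper's strategy: use the relative form of the destackification theorem (Theorem~\ref{TM:destackification}, in the generalization formulated as Theorem~\ref{T:destackification}) to reduce to weak factorization of \emph{representable} birational morphisms, which is the theorem of Abramovich--Karu--Matsuki--W\l{}odarczyk~\cite{abramovich-etal_weak-factorization}.

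\emph{Resolution of indeterminacy.} Let $\Gamma \subset Y \times_k X$ denote the scheme-theoretic closure of the graph of $\varphi$, a proper DM stack birational to both $Y$ and $X$. Combining destackification with embedded resolution of singularities in characteristic zero, one produces a smooth, separated DM stack $Z$ together with proper birational morphisms $p\colon Z \to X$ and $q\colon Z \to Y$, both obtained as compositions of smooth stacky blow-ups whose centers lie outside~$U$, and such that the total transforms of $X\smallsetminus U$ and $Y\smallsetminus U$ are SNC divisors on~$Z$.

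\emph{Reduction to the representable case.} Applying relative destackification with target~$X$ to $p$ produces a smooth stacky blow-up sequence $Z' \to Z$ such that the relative coarse space $Z'_{\cs/X}$ is a standard pair over~$X$ and $Z'_{\cs/X} \to X$ is representable. Applying the theorem again with target~$Y$ to the composition $Z' \to Z \to Y$ gives a further sequence $Z'' \to Z'$ with $Z''_{\cs/Y} \to Y$ representable; functoriality of destackification with respect to stabilizer-preserving smooth morphisms ensures that $Z''_{\cs/X} \to X$ remains representable. The canonical morphisms $Z'' \to Z''_{\cs/X}$ and $Z'' \to Z''_{\cs/Y}$ are iterated root stacks in SNC divisors, each such root stack being itself a stacky blow-up in a smooth center.

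\emph{Invoking AKMW and verification.} The representable proper birational morphisms $Z''_{\cs/X} \to X$ and $Z''_{\cs/Y} \to Y$ are now of the type covered by AKMW, who provide, for each, a factorization as a zigzag of smooth blow-ups and blow-downs compatible with the ambient SNC divisor and satisfying the analog of property~\ref{it-wf-defined}. Concatenating the two zigzags through the midpoint $Z''$ yields~\eqref{eq-intro-wf}, with $n_0$ the index corresponding to $Z''$, establishing property~\ref{it-wf-defined}. Property~\ref{it-wf-stacky-blow-up} holds by construction and property~\ref{it-wf-invariant-locus} because every center lies outside~$U$. The main obstacle is property~\ref{it-wf-normal-crossings}: the SNC structure of $D_i = X_i \smallsetminus U$ must be maintained throughout, and every center must have normal crossings with the ambient divisor. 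Since destackification is formulated on standard pairs, this compatibility is automatic on the stacky portion of the zigzag, and AKMW respects SNC structures on the representable side; the technical work consists in orchestrating these compatibilities and tracking the total transform of $X\smallsetminus U$ at each stage.
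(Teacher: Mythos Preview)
Your overall strategy matches the paper's (and Harper's): the paper does not prove Theorem~\ref{TM:HARPER} in detail but simply says that relative destackification (Theorem~\ref{T:destackification}) reduces the problem to weak factorization of \emph{representable} birational morphisms, then invokes~\cite{abramovich-etal_weak-factorization}, referring to \cite[\S5]{harper_factorization} for the remaining details. So at the level of architecture you are doing the right thing.

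There is, however, a genuine gap in your ``reduction to the representable case''. You apply relative destackification twice, first with target $X$ to get $Z'$, then with target $Y$ to get $Z''$, and assert that ``functoriality of destackification with respect to stabilizer-preserving smooth morphisms ensures that $Z''_{\cs/X} \to X$ remains representable.'' Functoriality says nothing of the sort: it is a statement about how the blow-up sequence transforms under certain base changes and smooth morphisms, not about preservation of representability of a relative coarse space under further stacky blow-ups. The second destackification $Z''\to Z'$ will in general include root stacks, and a root stack can create new relative inertia over $X$; there is no reason the passage to $Z''_{\cs/X}$ kills exactly this. Likewise your claim that both $Z''\to Z''_{\cs/X}$ and $Z''\to Z''_{\cs/Y}$ are iterated root stacks in SNC divisors is only supplied by the theorem for the target of the \emph{last} destackification you ran.

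The clean fix---and the simplification the paper alludes to in ``the relative formulation \dots\ simplifies Step~III of Harper's proof''---is to destackify \emph{once}, relative to the single target $T = X\times_k Y$ via the morphism $(p,q)\colon Z \to X\times_k Y$. Then Theorem~\ref{T:destackification} gives a smooth stacky blow-up sequence $Z'\to Z$ with $Z'_{\cs/T}\to X\times_k Y$ representable, hence representable over both $X$ and $Y$ simultaneously, and $Z'\to Z'_{\cs/T}$ is a gerbe followed by an iterated root stack in an SNC divisor. From there your concatenation-through-the-midpoint argument with AKMW goes through as you describe.
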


For further remarks on the various versions of weak factorization of
Deligne--Mumford stacks, see~\cite[Remark~1.3 and Remark~1.4]{bergh_factorization}.

\subsection*{Applications to resolutions of singularities}
Finally we list some important applications to resolution of singularities which easily follow from the
destackification theorem.

The first result is on resolution of tame quotient singularities.
Let $k$ be a perfect field, and let $X$ be an integral scheme over~$k$.
We say that $X$ has \emph{finite tame quotient singularities} if \'etale-locally on~$X$,
there exists a finite linearly reductive group scheme~$G$ over~$k$
and $G$-scheme $U$, smooth over~$k$, such that~$X=U/G$.

\begin{theoremalpha}[Resolution of tame quotient singularities]\label{TM:RESOLUTION-OF-QUOT-SINGS}
Let $k$ be a perfect field and let $X$ be a variety over $k$ with finite tame
quotient singularities. Then there exists a sequence of blow-ups
\[
X'=X_n\to X_{n-1}\to \dots \to X_1\to X_0=X
\]
such that $X'/k$ is smooth. Moreover, this sequence is functorial with respect
to field extensions and smooth morphisms $X'\to X$.
\end{theoremalpha}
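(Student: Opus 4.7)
The plan is to reduce the theorem to Theorem~\ref{TM:destackification} by lifting $X$ to its canonical smooth tame stack and descending the resulting destackification sequence to coarse spaces via Remark~\ref{R:projectivity}.

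First, I would construct the \emph{canonical stack} $\stX$ associated to $X$. Since $X$ has finite tame quotient singularities over a perfect field $k$, it admits an \'etale cover by opens of the form $U/G$, with $U$ smooth over $k$ and $G$ a finite linearly reductive group scheme acting on $U$. The local quotient stacks $[U/G]$ glue canonically to a smooth tame algebraic stack $\stX$ with finite inertia, equipped with a canonical morphism $\stX \to X$ identifying $X$ with the coarse moduli space $\stX_\cs$. This construction is intrinsic to $X$, and hence functorial with respect to field extensions and smooth morphisms $X' \to X$; in particular, a smooth morphism of coarse spaces lifts uniquely to a smooth stabilizer-preserving morphism between canonical stacks.

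Second, I would apply Theorem~\ref{TM:destackification} to the standard pair $(\stX, \emptyset)$ over $S = \Spec k$, obtaining a smooth stacky blow-up sequence
\[
\Pi\colon (Y, E) = (\stX_n, D_n) \to \cdots \to (\stX_0, D_0) = (\stX, \emptyset)
\]
whose coarse space $(Y_\cs, E_\cs)$ is a standard pair over $k$; in particular $Y_\cs$ is smooth over $k$. By Remark~\ref{R:projectivity}, each induced morphism $\stX_{i+1,\cs} \to \stX_{i,\cs}$ on coarse spaces is a blow-up. Setting $X_i := \stX_{i,\cs}$ then yields the desired sequence $X' = X_n \to \cdots \to X_0 = X$ with $X'$ smooth over $k$. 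Functoriality with respect to field extensions and smooth morphisms follows by combining the functoriality of the canonical stack construction with that of Theorem~\ref{TM:destackification}.

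The main obstacle will be Step One: setting up the canonical stack rigorously and verifying its functoriality in the \'etale topology on varieties with tame quotient singularities. Specifically, one must check that the local quotient presentations $[U/G]$ glue unambiguously and that a smooth morphism of coarse spaces lifts to a smooth stabilizer-preserving morphism between the corresponding canonical stacks, since this is what transports the functoriality of Theorem~\ref{TM:destackification} down to the coarse spaces. Once these foundational issues are settled, the theorem is an immediate consequence of destackification together with the projectivity remark.
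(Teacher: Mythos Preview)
Your approach is essentially the same as the paper's: build the canonical stack, apply Theorem~\ref{TM:destackification}, and descend to coarse spaces. The paper dispatches your ``main obstacle'' by citing Satriano~\cite[Theorem~1.10]{satriano_CST-finite-lin-red} for the existence and uniqueness of the canonical stack $X_\can \to X$ rather than gluing by hand, and is terser about the passage to coarse spaces; your explicit invocation of Remark~\ref{R:projectivity} to justify that each $X_{i+1,\cs}\to X_{i,\cs}$ is a blow-up fills in a step the paper leaves implicit.
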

\begin{proof}
By a result by Satriano~\cite[Theorem~1.10]{satriano_CST-finite-lin-red},
the variety $X$ admits a (unique) canonical stack $X_\can \to X$.
That is, there is a smooth tame stack $X_\can$ with coarse space $X$ such that
the canonical morphism $X_\can \to X$ is an isomorphism over an open
subvariety $U \subset X$ such that $X \smallsetminus U$ has codimension $\geq 2$.
The theorem now follows by applying destackification (Theorem~\tref{TM:destackification})
to $X_\can$.
\end{proof}

A result similar to Theorem~\ref{TM:RESOLUTION-OF-QUOT-SINGS} has been obtained independently by
Buonerba without the use of destackification~\cite{buonerba_resolution-tame-quotient}.
Buonerba only considers varieties with quotient singularities that are tame and \'etale.

Combining destackification with de Jong's theory of
alterations~\cite{deJong_smoothness-semistability-alterations,deJong_families-of-curves-and-alterations},
we easily recover two other results on resolution of singularities.
Recall that a morphism $\pi\colon X' \to X$ of integral schemes is called an \emph{alteration}
if it is proper and generically finite.
Furthermore, if $G$ is a finite group and $X'$ is a $G$-scheme over $X$,
we say that $(X', G)$ is a \emph{Galois alteration}~(cf.~\cite[Situation~5.3]{deJong_families-of-curves-and-alterations})
if the naturally defined group homomorphism from $G$ to the Galois group
$\Gal(K(X')/K(X))$ is an isomorphism, that is, if $K(X')^G/K(X)$ is purely
inseparable.
We formulate a version of a result by de Jong suited to our needs. 

\begin{theorem}[{de Jong \cite[Corollary~5.15]{deJong_families-of-curves-and-alterations}}]
\label{thm-galois-alt}
Let $X$ be an integral scheme of finite type over a field,
and let $Z \subsetneq X$ be a closed subset.
Then there exists a Galois alteration $(X', G)$ of $X$ such that $X'$ is regular
and the inverse image of $Z$ in $[X'/G]$ is the support of a simple normal crossings divisor.
\end{theorem}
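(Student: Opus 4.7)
The plan is essentially to cite \cite[Corollary~5.15]{deJong_families-of-curves-and-alterations} directly, since the statement is a minor reformulation of that result. The only translation required is from de Jong's formulation---which produces a regular scheme $X'$ with a $G$-action and a $G$-invariant SNC divisor $D' \subset X'$ whose support contains the preimage of $Z$---to the statement here, phrased in terms of SNC divisors on the quotient stack $[X'/G]$. For this, one simply notes that a $G$-invariant SNC divisor on a regular scheme equipped with an action of a finite group descends to an SNC divisor on the quotient stack in the sense of Definition~\ref{def-snc}: \etale-locally on $[X'/G]$ the action is linearizable, so the smooth components of $D'$ glue coherently to smooth components of an SNC divisor on the stack.

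If one wanted to reconstruct de Jong's argument rather than cite it, the plan would proceed by induction on $\dim X$. After replacing $X$ by a Galois alteration, one constructs a fibration $X \to Y$ onto a variety of strictly smaller dimension whose generic fiber is a smooth projective curve, and arranges for $Z$ to be contained in the union of the horizontal divisor plus the preimage of the discriminant. The semistable reduction theorem for families of curves---the main technical input of \cite{deJong_families-of-curves-and-alterations}---then produces a further Galois alteration along which the fibration acquires nodal reduction. Applying the inductive hypothesis to $Y$ together with the enlarged discriminant locus $\Delta \cup \pi(Z)$, pulling back, and performing equivariant normalized blow-ups along the nodal locus of the relative curve gives the required regular Galois alteration with SNC preimage of $Z$.

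The hard part in any stand-alone proof is the semistable reduction theorem for families of curves, which occupies essentially all of \cite{deJong_families-of-curves-and-alterations}; as the statement in the paper is an immediate citation, the write-up need only record the reference and, if necessary, spell out the descent of equivariant SNC to stacky SNC sketched above.
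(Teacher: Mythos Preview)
Your proposal is correct and matches the paper exactly: the paper gives no proof of this theorem at all, simply stating it as a citation of de Jong's \cite[Corollary~5.15]{deJong_families-of-curves-and-alterations}. Your remark about translating de Jong's $G$-strict normal crossings divisor into an SNC divisor on $[X'/G]$ is more than the paper actually spells out, but it is the right point and harmless to include.
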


The following theorem is of course just a special case of Hironaka's famous result ---
the interesting point is that the proof is by entirely different methods. 

\begin{theoremalpha}[Abramovich--de Jong~\cite{abramovich-deJong_smoothness-semistability-torgeom}]\label{TM:A-DJ}
Let $X$ be an integral scheme of finite type over a field~$k$ of characteristic zero,
and let $Z \subsetneq X$ be a closed subset.
Then there exists a modification $\pi\colon X' \to X$ such that $X'$ is smooth
and the inverse image of $Z$ is the support of a simple normal crossings divisor.
\end{theoremalpha}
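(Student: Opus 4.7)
The plan is to combine de Jong's theorem on Galois alterations (Theorem~\ref{thm-galois-alt}) with destackification (Theorem~\ref{TM:destackification}), in direct parallel with the proof of Theorem~\ref{TM:RESOLUTION-OF-QUOT-SINGS}. The idea is that de Jong replaces $X$ by a tame quotient stack whose preimage of $Z$ is already snc, after which destackification converts this stack into a smooth coarse space while preserving the snc structure.

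Concretely, I first apply Theorem~\ref{thm-galois-alt} to $(X, Z)$ to produce a Galois alteration $(X'', G)$ such that $X''$ is regular and the inverse image of $Z$ in $\mathscr{X} := [X''/G]$ is the support of a simple normal crossings divisor~$D$. Since $\kar k = 0$, the scheme $X''$ is smooth, every finite group scheme over $k$ is linearly reductive, and $\mathscr{X}$ is a smooth tame Deligne--Mumford stack with finite inertia. Moreover, the Galois condition that $K(X'')^G/K(X)$ be purely inseparable becomes an equality in characteristic zero, so the induced map on coarse spaces $\mathscr{X}_\cs = X''/G \to X$ is proper and birational, i.e., a modification.

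Next I apply Theorem~\ref{TM:destackification} to the standard pair $(\mathscr{X}, D)$ over $\Spec k$, obtaining a smooth stacky blow-up sequence $(Y, E) \to (\mathscr{X}, D)$ with $(Y_\cs, E_\cs)$ again a standard pair over $\Spec k$. Setting $X' := Y_\cs$ and letting $\pi\colon X' \to \mathscr{X}_\cs \to X$ denote the composite, the morphism $\pi$ is proper with $X'$ smooth. It is also birational: $\mathscr{X}$ is generically an algebraic space (since $X''/G \to X$ is birational), so each stacky blow-up in the sequence is birational, and hence so is the induced morphism $Y_\cs \to \mathscr{X}_\cs$. Thus $\pi$ is a modification of $X$.

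The main obstacle is to identify $\pi^{-1}(Z)$ with the support of a simple normal crossings divisor on~$X'$. By construction $|E_\cs|$ is snc, and its components are either strict transforms of components of $|D_\cs|$ (which lie over $Z$) or exceptional divisors of the stacky blow-ups in the destackification. The centers of these blow-ups may be chosen inside the divisor---a structural feature of Theorem~\ref{TM:destackification} inherited from divisorialification (Theorem~\ref{TM:divisorialification}) and the subsequent root stack operations---so every exceptional divisor lies over $|D|$, and hence over $Z$. Therefore $\pi^{-1}(Z)$ coincides with $|E_\cs|$ as a closed subset of $X'$, and is in particular the support of an snc divisor, completing the argument.
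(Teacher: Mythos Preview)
Your overall strategy matches the paper's proof exactly: apply de Jong's Galois alteration theorem, observe that $[X''/G]$ is smooth and tame in characteristic zero with $X''/G \to X$ a modification, and then destackify the standard pair $([X''/G], D)$. The first two paragraphs are correct and more detailed than the paper's terse ``the result follows.''

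The final paragraph, however, contains a genuine error. Your claim that ``the centers of these blow-ups may be chosen inside the divisor'' is not a feature of Theorem~\ref{TM:destackification}. In the divisorialification phase (Theorem~\ref{TM:divisorialification}), the center at each step is the locus where the divisorial index is maximal (see Proposition~\ref{prop-di-blow-up}); this locus lies in the stacky locus of $\mathscr{X}$, but the stacky locus need not be contained in $|D|$. For instance, take $\mathscr{X} = [\A{2}/\Gmu_2]$ with action $(x,y)\mapsto (-x,y)$ and $D = \{y=0\}$: the first center is $\{x=0\}$, which is disjoint from $Z$ in codimension~$0$. Consequently your conclusion that $\pi^{-1}(Z) = |E_\cs|$ is false in general; $|E_\cs|$ typically has extra components not lying over~$Z$.

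What is true, and what suffices, is that $\pi^{-1}(Z)$ is a union of \emph{some} components of $E_\cs$. This follows by induction using only that each center $Z_i$ has normal crossings with $D_i$. If $C_i \subset |D_i|$ is a union of components and $p\colon X_{i+1}\to X_i$ is the blow-up in $Z_i$, then for each component $D_i^j \subset C_i$ either $Z_i \subset D_i^j$, in which case $p^{-1}(D_i^j)$ is the strict transform together with the whole exceptional divisor, or $Z_i$ meets $D_i^j$ transversally, in which case $p^{-1}(D_i^j)$ equals the strict transform set-theoretically. Either way $p^{-1}(C_i)$ is again a union of components of $D_{i+1}$; root stacks are homeomorphisms and cause no trouble. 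Starting from $C_0 = |D|$ one obtains that the inverse image of $|D|$ in $Y$ is a union of components of $E$, hence its image in $Y_\cs$ is the support of a sub-divisor of the simple normal crossings divisor $E_\cs$.
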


\begin{proof}
Find a Galois alteration as in Theorem~\ref{thm-galois-alt}.
Since we assume that the characteristic is zero,
the stack quotient $[X'/G]$ is automatically smooth and tame,
and $X'/G$ is a modification of~$X$.
The result follows by destackification (Theorem~\tref{TM:destackification})
of the standard pair $([X'/G], D)$,
where $D$ is the simple normal crossings divisor whose support is the inverse image of~$Z$ in~$[X'/G]$.
\end{proof}


\begin{theoremalpha}[{Gabber 2005~\cite[Expos\'e~X, Theorem~2.1]{travaux-de-Gabber}}]\label{TM:GABBER}
Let $X$ be an integral scheme of finite type over a field $k$,
and let $Z \subsetneq X$ be a closed subset.
Let $\ell\neq \kar k$ be a prime number.
Then there exists an
alteration $\pi\colon X'\to X$ of degree prime to $\ell$ such that $X'$ is
regular and $\pi^{-1} (Z)$ is the support of a simple normal crossings divisor.
Moreover, $X'$ is smooth over a finite purely inseparable field extension
$k'/k$.
\end{theoremalpha}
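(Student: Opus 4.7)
The plan is to mimic the proof of Theorem~\ref{TM:A-DJ}, replacing de Jong's theorem by a tame Galois alteration and then tracking degrees carefully.

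First I would appeal to a tame refinement of Theorem~\ref{thm-galois-alt} to produce a Galois alteration $(X', G)$ of $X$ such that $|G|$ is prime to $\kar k$, the scheme $X'$ is regular and smooth over some finite purely inseparable extension $k'/k$, and the preimage of $Z$ in the quotient stack $\stX := [X'/G]$ is supported on a simple normal crossings divisor $D$. This refinement of de~Jong in positive characteristic is due to Gabber, following the strategy of \cite[Exposé~X]{travaux-de-Gabber}. Because $|G|$ is prime to $\kar k$, the stack $\stX$ is a smooth tame Artin stack over $\Spec k'$ with finite inertia, and $(\stX, D)$ is a standard pair.

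Next I would apply the destackification theorem (Theorem~\ref{TM:destackification}) to $(\stX, D)$ over $\Spec k'$. This yields a smooth stacky blow-up sequence ending in $(Y, E)$ whose coarse space $Y_\cs$ is smooth over $k'$ and has the preimage of $Z$ as the support of the simple normal crossings divisor $E_\cs$.

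Finally I would verify that the composition $\pi\colon Y_\cs \to \stX_\cs = X'/G \to X$ is an alteration of degree prime to $\ell$. The first factor $Y_\cs \to X'/G$ is birational: the stacky blow-up sequence induces a sequence of ordinary blow-ups on coarse spaces, since root stacks leave the coarse space unchanged while ordinary blow-ups descend to ordinary blow-ups of the coarse space. The second factor $X'/G \to X$ is purely inseparable of degree a power of $\kar k$, because $K(X')^G/K(X)$ is purely inseparable by definition of a Galois alteration. Hence $\deg \pi$ is a power of $\kar k$, which is prime to $\ell$ by hypothesis. The main obstacle is the first step: securing a tame Galois alteration in positive characteristic is not produced by Theorem~\ref{thm-galois-alt} as stated and requires Gabber's refinement of de~Jong. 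Once this is available, destackification handles the passage from the smooth tame stack $\stX$ to a smooth scheme with SNC boundary, and the prime-to-$\ell$ condition on the degree falls out automatically from the purely inseparable character of $X'/G \to X$.
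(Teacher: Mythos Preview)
Your proposal correctly identifies destackification as the main tool, but the gap you yourself flag --- needing a \emph{tame} Galois alteration, i.e., one with $|G|$ prime to $\kar k$ --- is both unnecessary and essentially circular: producing such an alteration is already part of Gabber's contribution in \cite[Expos\'e~X]{travaux-de-Gabber}, so invoking it to prove Theorem~\ref{TM:GABBER} begs the question.

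The paper bypasses this entirely with a Sylow trick. One uses only de~Jong's result (Theorem~\ref{thm-galois-alt}) to obtain a Galois alteration $(X',G)$, with no control on~$|G|$. Then one passes to an $\ell$-Sylow subgroup $H\subseteq G$. Since $\ell\neq \kar k$, the order of $H$ is prime to $\kar k$, so $[X'/H]$ is smooth and tame --- this is what makes destackification applicable. Moreover $X'/H\to X$ factors as $X'/H\to X'/G\to X$, where the first map has degree $[G:H]$ prime to~$\ell$ and the second is purely inseparable; hence the alteration has degree prime to~$\ell$. Destackification of $([X'/H],D)$ then finishes as in your argument. For imperfect~$k$ one first works over $k^{\mathrm{perf}}$ and descends to a finite purely inseparable~$k'/k$.

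So the missing idea is not a stronger alteration theorem but the observation that one may replace $G$ by its $\ell$-Sylow: this simultaneously forces tameness (order prime to $p$) and prime-to-$\ell$ degree (index prime to $\ell$), with de~Jong's theorem as the only external input.
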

\begin{proof}
Assume first that $k$ is perfect.
Find a Galois alteration as in Theorem~\ref{thm-galois-alt},
and let $H$ be an $\ell$-Sylow subgroup of $G$.
Then the stack $[X'/H]$ is smooth and tame,
and $X'/H \to X$ is an $\ell'$-alteration, i.e., the degree is prime to $\ell$.
The result follows by destackification (Theorem~\tref{TM:destackification}) of the standard pair $([X'/H], D)$,
where $D$ is the simple normal crossings divisor whose support is the inverse image of~$Z$ in~$[X'/H]$.
For general $k$, we first find an alteration of $\bigl(X\times_k
\Spec k^\mathrm{perf}\bigr)_\red$ with the desired properties.
This alteration is defined over some finite purely inseparable field extension $k'/k$.
\end{proof}

The original proofs of Theorem~\ref{TM:A-DJ} and Theorem~\ref{TM:GABBER}
by Abramovich--de Jong and Gabber, respectively,
are along the same lines as the proofs presented here.
But instead of using destackification,
the finite quotient singularities are resolved by first replacing $X'$
by a suitable blow-up such that $G$ (resp.\ $H$) acts toroidally on~$X'$.
Then $X'/G$ becomes toroidal and the singularities can be resolved using~\cite{KKMS}.

Recently, Temkin has improved Gabber's prime-to-$\ell$ alterations theorem and
obtained $p$-alterations~\cite{temkin_distillation}.
It would be interesting to find a proof of this using stacks,
cf.\ Remark~\ref{R:NpS}.

\subsection*{Outline}
In Section~\ref{S:preliminaries} we give some preliminaries on gerbes,
coarsenings and rigidifications.
In
Section~\ref{S:stacky-blowups}, we define standard pairs and recall the notion
of stacky blow-ups
from~\cite{bergh_functorial-destackification}.
In Section~\ref{S:charts} we give charts for smooth tame stacks:
\'etale-locally such a stack is simply a linear representation of a tame group
scheme.  In Section~\ref{S:codimension} we define our main technical invariant:
the \emph{codimension of stackiness}. We give a flexible definition via the
cotangent complex. In the case of a linear representation it is the dimension
of the non-trivial subspace of the representation. It is also indeed exactly
the codimension of the locus of maximal stabilizer.

In Section~\ref{S:divisorialification} we introduce our main invariant, the
\emph{divisorial index}, via codimension of stackiness. We then prove
divisorialification,
Theorem~\ref{TM:divisorialification}/\ref{T:divisorialification}.
In Section~\ref{S:destackification},
we combine Theorem~\ref{TM:divisorialification} with destackification for
stacks with diagonalizable stabilizers~\cite{bergh_functorial-destackification}
and prove destackification,
Theorem~\ref{TM:destackification}/\ref{T:destackification}.
In Section~\ref{S:infinite-stabilizers}
we prove Theorem~\ref{TM:unipotent-stabs} on the structure of stabilizers
when the stackiness is contained in a simple normal crossings divisor.

There are also two appendices on tame gerbes. In Appendix~\ref{A:gerbes}, we
show that every quasi-coherent sheaf on a tame gerbe splits canonically as a
direct sum of a sheaf with trivial action and a sheaf with purely non-trivial
action. We also give a similar result for complexes. In
Appendix~\ref{app-mult-type} we prove that the class of the cotangent complex
$L_{\B G/k}$ in the Grothendieck group $\K_0(\B G)$ is zero when $G/k$ is tame
and $k$ is algebraically closed.

\subsection*{Acknowledgments}
The first author was supported by the Danish National Research Foundation through the
\emph{Centre for Symmetry and Deformation} (DNRF92).
The second author was supported by the Swedish Research Council (2011-5599 and
2015-05554).

\end{section}
\setcounter{secnumdepth}{3}


\begin{section}{Preliminaries on stacks}\label{S:preliminaries}


\subsection{Inertia}
The \emph{relative inertia stack} $I_{X/S}$ of a morphism between algebraic
stacks $X\to S$ is defined as the
pull-back of the diagonal $\Delta\colon X \to X\times_S X$ along itself.
The stack $I_{X/S}$ has a natural structure of group object
over $X$ and this group is trivial if and only if the structure morphism
$X \to S$ is representable by algebraic spaces. A composition
$X\to Y\to S$ gives rise to an exact sequence of groups:
\begin{equation}\label{E:inertia-sequence}
1 \to I_{X/Y} \to I_{X/S} \to f^* I_{Y/S}
\end{equation}
We say that $X\to Y$ is \emph{stabilizer preserving} or \emph{inert} over $S$
if $I_{X/S}\to f^*I_{Y/S}$ is an isomorphism.

\subsection{Coarse spaces}
An algebraic stack $X$ with finite inertia $I_X$ admits a \emph{coarse (moduli)
  space}~$\pi\colon X \to X_\cs$, which is initial among morphisms to algebraic
spaces~\cite{keel_mori_quotients,rydh_finite_quotients}. 
Since the formation of coarse spaces respects flat base change,
this generalizes to the relative setting as follows. If $X \to S$ is a morphism
of algebraic stacks with finite relative inertia $I_{X/S}$, then it factors
canonically through its \emph{relative coarse space} $X \to X_{\cs/S} \to S$.
The morphism $X_{\cs/S}\to S$ is representable and the factorization is initial
among factorizations $X\to Y\to S$ with $Y\to S$ representable.

When $Y$ is a tame stack, a smooth morphism $X\to Y$ is stabilizer preserving over $S$ if and only if
$X_\cs\to Y_\cs$ is smooth and $X=X_\cs\times_{Y_\cs} Y$,
see~\cite[Proposition~6.7]{rydh_finite_quotients} for the \'etale case and \cite{rydh_luna-fundamental}
for the smooth case. Tameness is not needed in the \'etale case but necessary
in the smooth case.

\subsection{Gerbes}
An \emph{fppf gerbe} is a morphism $\pi\colon X\to S$ between stacks such
that for any $S$-scheme $T$ the following holds: (1) there exists a section
fppf-locally on $T$ and (2) two sections over $T$ are isomorphic fppf-locally
on $T$. Equivalently,
$\pi$ and $\Delta_\pi$ are epimorphisms of fppf stacks. If $X$ and $S$ are
algebraic stacks, then $\pi$ is smooth and $\Delta_\pi$ is faithfully flat and
locally of finite presentation.

Every stack $X$ is an fppf gerbe over its associated space $\pi_0(X)$: the fppf
sheafification of the associated presheaf of \emph{sets} $T\mapsto \pi_0\bigl(
X(T) \bigr)$. If $X$ is an algebraic stack, then $\pi_0(X)$ is an algebraic
space if and only if the inertia $I_X\to X$ is flat and locally of finite
presentation. An
\emph{(absolute) gerbe} is an algebraic stack $X$ such that $I_X\to X$
is flat and locally of finite presentation.

\subsection{Rigidifications}
\label{sec-rigid}
Let $X \to S$ be a morphism of algebraic stacks.
By a (partial) \emph{rigidification} of $X$ over~$S$,
we mean a factorization $X \to X' \to S$ such that the first morphism is a gerbe.
Such factorizations are in one to one correspondence with subgroups $K$
of the relative inertia $I_{X/S}$ which are flat and locally of finite presentation~\cite[Appendix~A]{abramovich_olsson_vistoli_tame_stacks}. Moreover, the
$2$-category of rigidifications of $X$ is equivalent to the partially ordered
set of flat subgroups.

We will only be interested in the case when both $K$ and $I_{X/S}$ are finite,
in which case we have a factorization $X \to X' \to X_{\cs/S}$. If there is a
final object among rigidifications, then we will call it the \emph{total
rigidification} and denote it $X_{\rig/S}$. The total rigidification of $X$ over
$S$ exists when $X$ is normal and irreducible and Deligne--Mumford over $S$: it
corresponds to
the union $I^{\gen}_{X/S}$ of the irreducible components of $I_{X/S}$ that dominate $X$. Note that $I^{\gen}_{X/S}\to X$ is \'etale because it is unramified and $X$ is
normal~\cite[Corollaire~18.10.3]{egaIV}.
If $X\to X'\to S$ is a rigidification of stacks with finite inertia over $S$
such that $X'\to S$ is representable
over a schematically dense open substack $U'\subseteq X'$, then $X'=X_{\rig/S}$.
Indeed, then $I_{X'/S}\times_{X'} X$ is the closure of
$I_{X/S}|_{U'\times_{X'} X}$ in $I_{X/S}$ and this is the largest flat subgroup.

\subsection{Coarsenings}
A morphism $f\colon X\to Y$ is called a \emph{coarsening} morphism if $f$ is a
universal homeomorphism with finite diagonal such that $f_*\sO_X=\sO_Y$. We
also say that $Y$ is a (partial) coarsening of $X$. Rigidifications $X\to X'$
with finite inertia and coarse space maps $X\to
X_{\cs/S}$ are examples of coarsenings~\cite[Theorem~6.12]{rydh_finite_quotients}.

The $2$-category of coarsenings of $X$ is equivalent to a partially ordered set
with initial object $X$ and final object $X_\cs$~\cite[\S
  2]{abramovich-temkin-wlodarczyk_destackification}. Not every coarsening
corresponds to a subgroup of inertia. There is an example of a coarsening
morphism $X\to Y$, with $X$ a wild Deligne--Mumford stack and $Y$ an algebraic
stack with finite inertia but not Deligne--Mumford, such that
$I_{X/Y}=0$~\cite[\S4.5]{romagny-rydh-zalamansky_complexity}.
But coarsening morphisms
$X\to Y$ such that $Y$ is Deligne--Mumford with finite inertia is in one-to-one
correspondence with open and closed subgroups of the inertia stack of
$I_X$~\cite[Appendix]{abramovich-temkin-wlodarczyk_destackification}.

The canonical factorization $X\to X_{\cs/S}\to S$ is not only initial among
factorizations where the second map is representable but also final among
factorizations where the first map is a coarsening morphism. That is,
coarsening morphisms and representable morphisms constitute an orthogonal
factorization system in the appropriate $2$-categorical sense. In particular,
$X\to S$ is a coarsening morphism if and only if $X_{\cs/S}=S$.
\end{section}


\begin{section}{Stacky blow-up sequences}\label{S:stacky-blowups}
In this section, we recall the notions of standard pairs and stacky blow-up sequences
from~\cite[Section~2]{bergh_functorial-destackification}.
 
\begin{definition}[Simple normal crossings divisor]
\label{def-snc}
Let $\pi\colon X \to S$ be a smooth morphism of algebraic stacks.
\begin{enumerate}
\item
A family $\{D^i \hookrightarrow X\}_{i \in I}$ of closed immersions
is called a \emph{labeled simple normal crossings divisor} on $X$ over~$S$
provided that $\bigcap_{j \in J} D^j$ is smooth of codimension $|J|$ over $S$
for every finite subset $J \subseteq I$.
\item
If, in addition, the indexing set $I$ is endowed with a well-ordering,
we say that $\{D^i \hookrightarrow X\}_{i \in I}$ is an
\emph{ordered simple normal crossings divisor} on $X$ over~$S$.
\item
The \emph{underlying divisor} of a labeled or ordered simple normal crossings divisor
$\{D^i \hookrightarrow X\}_{i \in I}$, with locally finite indexing set, is the closed substack $D = \bigcup_{i \in I} D^i$.
\item
A \emph{simple normal crossings divisor} $D$ on~$X$ relative~$S$ is a closed substack
of $X$ which is the underlying divisor of some labeled simple normal crossings divisor.
\item
A closed substack $Z \hookrightarrow X$ has \emph{normal crossings} with a given
labeled simple normal crossings divisor $\{D^i \hookrightarrow X\}_{i \in I}$
provided that its intersection with $\bigcap_{j \in J} D^j$ is smooth over~$S$
for each finite subset $J \subseteq I$.
\end{enumerate}
By abuse of notation, we usually denote a labeled or ordered simple normal crossings divisor
simply by~$D$.
\end{definition}

In this article, we will only consider ordered simple normal crossings divisors with finite indexing sets. 

\begin{remark}
Let $D = \{D^i \hookrightarrow X\}_{i \in I}$ be a labeled simple normal crossings divisor on $X$
relative~$S$ as in the definition above.
Then it is clear that each $D^i \hookrightarrow X$ is a smooth relative effective Cartier divisor on $X$
over~$S$, and $Z$ is smooth over $S$ if it has normal crossings with~$D$.
The condition that $Z$ has normal crossings with $D$ only depends on the underlying divisor
of~$D$, so the notion also makes sense for unlabeled simple normal crossings divisors.
\end{remark}

\begin{remark}
Note that we allow the divisors $D^i$ to be empty or to have multiple connected components.
This feature is crucial to obtain the right functoriality properties for our algorithms.
\end{remark}

\begin{remark}
\label{rem-snc-morphism}
Let $\pi\colon X \to S$ be a morphism of algebraic stacks,
and let $D = \{D^i \hookrightarrow X\}^n_{i = 1}$ be a sequence of effective Cartier divisors on~$X$.
Then we get an induced sequence $\{\mathcal{O}_X \to \mathcal{O}_X(D^i)\}_{i = 1}^n$ of
line bundles together with global sections.
Such a datum corresponds to a morphism
\begin{equation}
\label{eq-snc-morphism}
\delta_{D/S} \colon X \to [\mathbb{A}^n_S/\Gm^n]
\end{equation}
over~$S$.
Note that $D$ has simple normal crossings relative $S$ if and only if the morphism $\delta_{D/S}$ is smooth.
\end{remark}

\begin{definition}[Standard pair]
\label{def-standard-pair}
Let $S$ be an algebraic stack.
A \emph{standard pair} $(X, D)$ is a pair where $X$ is an algebraic stack that is smooth and of finite
presentation over~$S$ and $D$ is an ordered simple normal crossings divisor on $X$ relative~$S$ indexed
by a finite set.
\end{definition}

In the rest of the section we describe our conventions on how standard pairs transform
under certain operations.
We fix an algebraic stack $S$ and a standard pair $(X, D)$ over~$S$,
with~$D = \{D^i \hookrightarrow X\}_{i \in I}$.

\subsection{Base change of standard pairs}
\label{S:sp-base-change}
There is an obvious notion of base change of $(X, D)$ along a morphism $S' \to S$
of algebraic stacks.
This is the standard pair $(X_{S'}, D_{S'})$ over~$S'$ where
$X_{S'} = X \times_S S'$, $D^i_{S'} = D^i \times_S S'$
and $D_{S'} = \{D_{S'}^i \hookrightarrow X_S\}_{i \in I}$.
Similarly, we may base change along a smooth morphism $X' \to X$
and obtain a standard pair $(X', D')$ over~$S$
with $D' = \{D_{X'}^i \hookrightarrow X'\}_{i \in I}$.

\subsection{Coarsenings of standard pairs}
\label{S:sp-coarse}
Given a morphism $X \to T$ over~$S$ such that $I_{X/T}$ is finite,
we may form the pair $(X_{\cs/T}, D_{\cs/T})$ over~$S$,
where we take $D_{\cs/T} = \{D_{\cs/T}^i \hookrightarrow X_{\cs/T}\}_{i \in I}$
with $D_{\cs/T}^i \hookrightarrow X_{\cs/T}$ the schematic image of $D^i \hookrightarrow X$ in~$X_{\cs/T}$.
The pair $(X_{\cs/T}, D_{\cs/T})$ is of course usually not a standard pair ---
the whole point with destackification being to try to reach a situation where it is.
If the fibers of $I_{X/T}$ are tame,
then at least the pair $(X_{\cs/T}, D_{\cs/T})$ has some good properties:
\begin{enumerate}
\item
$X_{\cs/T}$ is flat and of finite presentation over~$S$;
\item
$D^i_{\cs/T}$ is the relative coarse space of $D^i \to T$; 
\item
$D^i_{\cs/T} \hookrightarrow X_{\cs/T}$ is a flat family of $\mathbb{Q}$-Cartier divisors over $S$.
\end{enumerate}
These are easy consequences of the definitions, \cite[Theorem 3.2, Corollary 3.3]{abramovich_olsson_vistoli_tame_stacks} and \cite[Proposition~6.1]{olsson_G-torsors}.
Note that without the tameness hypothesis, none of these properties hold in general.

\subsection{Rigidifications of standard pairs}
\label{S:sp-rigidification}
Now let $X \to T$ be a morphism over~$S$ and assume that a
total rigidification $\pi\colon X \to X_{\rig/T}$, as in Section~\ref{sec-rigid}, exists.
In particular,  the morphism $\pi$ is a coarsening,
so we get an induced pair,
which we denote by $(X_{\rig/T}, D_{\rig/T})$,
as in the previous subsection.
This is always a standard pair over~$S$,
even if the relative inertia of $\pi$ is not tame.
This follows from the facts that gerbes are faithfully flat
(in fact, even smooth)
and that $D^i=\pi^{-1}(D_{\rig/T}^i)$.

\begin{warning}
Note our slightly misleading abuse of notation:
the canonical morphism $D^i\to D^i_{\rig/T}$ is typically only a partial
rigidification and not the total rigidification.
\end{warning}

\subsection{Blow-ups of standard pairs}
\label{S:sp-blow-up}
As blow-ups are local constructions, they make sense also for algebraic stacks.
Given our standard pair $(X, D)$,
we will only consider blow-ups in (smooth) centers $Z$
that have normal crossings with~$D$.
This produces a new standard pair $(\widetilde{X}, \widetilde{D})$.
Here $\widetilde{D} = \{\widetilde{D}^i \hookrightarrow \widetilde{X}\}_{i \in I^+}$,
where $I^+$ is $I$ with an extra element ${+}$ appended,
$\widetilde{D}^+$ is the exceptional divisor of the blow-up and $\widetilde{D}^i$ is the strict transform of $D^i$
for every $i \in I$.
We call $(\widetilde{X}, \widetilde{D})$ the \emph{transform} of the blow-up centered in $Z$.

\subsection{Root stacks}
\label{S:sp-root}
Root stacks were first studied systematically by Cadman~\cite[\S2]{cadman_root-stacks}
and play a vital role in the destackification process. 
Given an algebraic stack $Y$ and an effective Cartier divisor~$E$ on~$Y$,
the \emph{root stack} $\widetilde{Y} \to Y$ of order~$r$ is
obtained by formally adjoining an $r$th root $\widetilde{E}$ of $E$.
We say that the pair $(E, r)$ is the \emph{center} of the root stack.

We will only consider root stacks centered in one of the divisors $D^j$
of our standard pair $(X, D)$.
The construction produces a new standard pair $(\widetilde{X}, \widetilde{D})$.
Here we define $\widetilde{D}$ as $\{\widetilde{D}^i \hookrightarrow \widetilde{X}\}_{i \in I}$ with
$\widetilde{D}^j$ being the $r$th root of $D^j$ and $\widetilde{D}^i = D^i\times_X \widetilde{X}$
for $i \neq j$.
Note that the morphism $\widetilde{X} \to X$ is a coarsening and that
$(X, D)$ coincides with the standard pair $(\widetilde{X}_{\cs/X}, \widetilde{D}_{\cs/X})$
as defined in Section~\ref{S:sp-coarse}.
We call $(\widetilde{X}, \widetilde{D})$ the \emph{transform} of the root stack centered in $Z = (D^j, r)$.
By a \emph{stacky blow-up} of a standard pair we mean either a root stack or an ordinary blow-up
as described in Section~\ref{S:sp-blow-up}.

\begin{remark}
A natural way to combine blow-ups and root stacks is to first blow-up a closed
substack $Z\subset X$ and then take the $r$th root stack of the exceptional
divisor $E$. This is called a stacky blow-up with center $(Z,r)$
in~\cite{rydh_compactification-tame-stacks}. When $r=1$, this is an ordinary
blow-up and when $Z$ is a divisor, this is a root stack.  If $I$ is the ideal
sheaf defining $Z$, then the stacky blow-up with center $(Z,r)$ equals the
stacky Proj of the generalized Rees algebra $\bigoplus_{d\geq 0} I^{\lceil d/r
  \rceil}$.
Note however that this point of view is not quite compatible with the conventions for transforms of standard pairs
for root stacks used in this article.
\end{remark}
 
\subsection{Stacky blow-up sequences}
\label{S:sp-blow-up-sequence}

A \emph{stacky blow-up sequence} starting at $(X, D)$ is a sequence
$$
\Pi\colon (X_n, D_n) \xrightarrow{\pi_n} \cdots \xrightarrow{\pi_1} (X_0, D_0) = (X, D)
$$
of smooth stacky blow-ups in centers $(Z_i,r_i)$,
where $(X_{i + 1}, D_{i+1})$ denotes the transform of $(X_i, D_i)$.
We consider the centers of the stacky blow-ups to be part of the datum. 
Two such sequences are considered the same if they only differ by insertion or deletion
of stacky blow-ups with empty centers.
If all stacky blow-ups in the sequence are in fact ordinary blow-ups,
we say that $\Pi$ is an \emph{ordinary blow-up sequence}.

Just as for standard pairs there are obvious notions of base change of a stacky blow-up sequences
starting at $(X, D)$.
As before, we can consider two cases: base change along an arbitrary morphism $S' \to S$
and base change along a smooth morphism $X' \to X$.

Consider a construction which to each algebraic stack $S$ and each standard pair $(X, D)$ over $S$
associates a blow-up sequence $\Pi$ starting at $(X, D)$.
We say that the construction is \emph{functorial} with respect to a base change $S' \to S$
if the blow-up sequence obtained by applying the construction to~$(X_{S'}, D_{S'})$
equals the base change of $\Pi$ along $S' \to S$.
We say that the construction is \emph{functorial} with respect to a smooth morphism $X' \to X$
if the blow-up sequence obtained by applying the construction to~$(X', D')$
equals the base change of $\Pi$ along $X' \to X$.

\end{section}


\begin{section}{Charts for smooth tame stacks}\label{S:charts}
In this section we generalize the local charts
of~\cite[Proposition~5.3]{bergh_functorial-destackification} from stacks with
diagonalizable stabilizers to stacks with finite linearly reductive
stabilizers.

Let $G\to S$ be a group scheme.
A linear action of $G$ on $\A{n}_S$ is equivalent to giving a rank $n$ vector
bundle $\sE$ on $\B G_S$ together with a trivialization of $s^*\sE$ where $s$
denotes the
tautological section $S\to \B G_S$. This equivalence is
given by $[\A{n}_S/G]=\VV_{\B G}(\sE)$. If $G$ is tame, then $\sE=\sE_\triv\oplus \sE_\nt$ (Theorem~\ref{T:tame-splitting:QCoh}) and the
$G$-fixed locus of $\A{n}_S$ is $\VV_S(\sE_\triv)$ which is smooth of codimension
$\rk \sE_\nt$. The conormal bundle of the fixed locus is $s^*\sE_\nt$.
That a coordinate hyperplane $\{x_i=0\}$ is $G$-invariant means that
$\sE$ has a
corresponding rank $1$ direct summand.

\begin{theorem}\label{T:Luna-slice-finite}
Let $X$ be an algebraic stack with finite tame inertia and smooth over a
scheme $S$. Let $x\in |X|$ be a point and $s\in |S|$
its image. If $\kappa(x)/\kappa(s)$ is finite and separable, then there exists
a commutative diagram
\[
\xymatrix{%
([\A{n}_{S'}/G],0)\ar[d] & (W,w)\ar[l]_-q\ar[r]^-{p} & (X,x)\ar[d] \\
(S',s')\ar[rr] && (S,s)}
\]
where the horizontal morphisms are \'etale and stabilizer preserving and $G\to
S'$ is a finite locally free tame group scheme acting linearly
on $\A{n}_{S'}$.
If in
addition $D=\sum_i D^i$ is a divisor on $X$ with simple normal crossings
over $S$ (Definition~\ref{def-snc}),
such that $x\in D^i$ for all
$i$, then it can be arranged that $E^i=\{x_i=0\}\subset \A{n}_{S'}$ is
$G$-invariant and $q^{-1}(E^i)=p^{-1}(D^i)$.
\end{theorem}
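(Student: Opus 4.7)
The plan is to combine the Abramovich--Olsson--Vistoli local structure theorem for tame stacks with a Luna-style slice argument, reducing to the diagonalizable case of \cite[Proposition~5.3]{bergh_functorial-destackification}; the main extra work will go into making the linearization compatible with the divisor. Since the stabilizer $G_x$ at $x$ is a finite linearly reductive group scheme over $\kappa(x)$ and $\kappa(x)/\kappa(s)$ is finite separable, I would first descend $G_x$ to a finite locally free tame group scheme $G \to S'$ over an étale neighborhood $(S', s') \to (S, s)$ with $\kappa(s') = \kappa(x)$, and then apply the local structure theorem over $S'$ to obtain a stabilizer-preserving étale morphism $([V/G], \bar v) \to (X, x)$ with $V$ affine and smooth over $S'$ and $\bar v$ the image of a $G$-fixed point $v \in V(\kappa(s'))$.

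To linearize at $v$ I would invoke a Luna-type slice argument in the linearly reductive setting. The cotangent space $\Omega_{V/S'} \otimes \kappa(v)$ carries a linear $G$-representation, and by the tame splitting (Theorem~\ref{T:tame-splitting:QCoh}) I can lift a chosen $G$-equivariant basis to $G$-equivariant functions $y_1, \ldots, y_n$ on a $G$-stable open neighborhood $V'$ of $v$ in $V$. These define a $G$-equivariant morphism $V' \to \A{n}_{S'}$, étale at $v$ by construction, with the linear $G$-action on $\A{n}_{S'}$ determined by the cotangent representation. Passing to quotient stacks with $(W, w) := ([V'/G], \bar v)$, both induced morphisms $q\colon W \to [\A{n}_{S'}/G]$ and $p\colon W \to X$ are étale and stabilizer preserving.

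For the divisor $D = \sum_i D^i$ with $x \in D^i$, each $D^i_V := V \times_X D^i$ is a smooth $G$-invariant Cartier divisor through $v$. The simple normal crossings hypothesis on $D$ translates into the conormal lines $L^i \subseteq \Omega_{V/S'} \otimes \kappa(v)$ being independent one-dimensional $G$-subrepresentations. I would choose the $G$-equivariant basis above to contain a nonzero element of each $L^i$ as its first vectors, and lift each such element to an equation for $D^i_V$; the discrepancy between a generic equivariant lift and a local equation of $D^i_V$ is a unit, which by linear reductivity of $G$ can be taken $G$-invariant after averaging. The resulting coordinates then cut out $D^i_V$ schematically, so $E^i := \{x_i = 0\} \subseteq \A{n}_{S'}$ is $G$-invariant and $q^{-1}(E^i) = p^{-1}(D^i)$ after possibly shrinking $V'$.

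The main obstacle is the slice step itself, since the classical Luna theorem is stated for reductive groups over a field whereas what we need is its counterpart for finite linearly reductive group schemes over a base, with the additional constraint that a chosen $G$-invariant simple normal crossings divisor be taken to coordinate hyperplanes. Tameness is what drives this through: Theorem~\ref{T:tame-splitting:QCoh} and its consequences ensure that the relevant $G$-equivariant data on the fiber at $v$---the chosen basis of the cotangent space and the defining equations of each $D^i_V$---extend to a $G$-stable neighborhood without obstruction.
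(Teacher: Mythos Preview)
Your overall strategy matches the paper's: pass to an \'etale neighborhood where the stabilizer extends, apply Abramovich--Olsson--Vistoli to present $X$ \'etale-locally as $[U/G]$, and then linearize at the fixed point using linear reductivity. The paper carries this out in the language of vector bundles on $\B G$ --- lifting the conormal bundle $\sN_{\stG_x/X_s}$ from $\B G_{\kappa(s)}$ to a bundle $\sE$ on $\B G_{S'}$ via Lemma~\ref{L:deform-bundle-tame-gerbe}, and then using projectivity of $\sE$ in $\QCoh(\B G)$ to produce the map $X \to \VV(\sE)$ --- but this is your equivariant lifting rephrased stackily.

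The one step that does not work as written is your divisor argument. A ``generic equivariant lift'' $y_i \in \mathfrak{m}_v$ of a generator of $L^i$ need not cut out $D^i_V$: it defines \emph{some} smooth hypersurface through $v$ with the correct conormal direction, but there is no reason it should equal $u \cdot f_i$ for a unit $u$ and a local equation $f_i$ of $D^i_V$, so the ``discrepancy is a unit'' claim is false and the averaging step is moot. The fix is to lift directly into the ideal sheaf: the $G$-equivariant surjection $\mathcal{I}(D^i_V) \surj L^i$ admits a $G$-equivariant splitting by linear reductivity, yielding a semi-invariant $y_i \in \mathcal{I}(D^i_V)$ that generates near $v$ by Nakayama. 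This is exactly what the paper does when it lifts $\sL_i \to \sL_{s,i}=i^*\sN_{D^i/X}$ to a map $\sL_i \to r_*\sN_{D^i/X}$, and no unit-averaging enters.
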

\begin{proof}
Since the structure morphism $\stG_x\to \Spec \kappa(x)$ of the residual gerbe
is smooth, we can find a finite separable extension $k/\kappa(x)$ such that
$\stG_x(k)\neq \emptyset$. After replacing $(S,s)$ with an \'etale
neighborhood, we can thus assume that $k=\kappa(x)=\kappa(s)$ and $\stG_x=\B G_x$
where $G_x\to \Spec k$ is a finite tame group scheme. After replacing $(S,s)$
with an \'etale neighborhood, we may assume that $G_x$ is the restriction of a
finite locally free tame group scheme $G\to
S$~\cite[Proposition~2.18]{abramovich_olsson_vistoli_tame_stacks}. We can also assume
that $S$ is affine. \'Etale-locally on the coarse space $X_{\cs}$, we may then
write $X=[U/G]$ for an affine scheme $U$ over
$S$~\cite[Proposition~3.6]{abramovich_olsson_vistoli_tame_stacks} (follow Step 1, but
in the the first paragraph use the tame group scheme given as the pull-back of
$G\to S$ to $X_{\cs}$).

Next, we consider the conormal bundle $\sE_s:=\sN_{\stG_x/X_s}=\im/\im^2$ of
the closed immersion $i\colon \stG_x\inj X_s$. This is a locally free sheaf of
rank $n=\dim X_s$ on $\B G_x$. If $D=\sum_{i=1}^m D^i$ is a divisor with
simple normal crossings and
every component passes through $x$, then we let
$\sL_{s,i}=i^*\sN_{D^i/X}$ --- a locally free sheaf of rank
$1$. This gives rise to a splitting
\[
\sE_s=\sL_{s,1}\oplus \dots \oplus\sL_{s,m}\oplus \sE'_s
\]
where $\sE'_s=\sN_{\stG_x/\cap_i (D^i)_s}$.
By Lemma~\ref{L:deform-bundle-tame-gerbe}, we may lift these vector bundles to
$\B G$, after replacing $(S,s)$ with an \'etale neighborhood, obtaining a locally
free sheaf of rank $n$
\[
\sE=\sL_1\oplus \dots \oplus\sL_m\oplus \sE'
\]
on $\B G$. Equivalently, we have a linear action of $G$ on $\A{n}_S$ together
with $m$ invariant divisors $E^i=\{x_i=0\}$ and $\VV(\sE):=\Spec_{\B G} \Sym(\sE)
= [\A{n}_S / G]$.

Let $r\colon X=[U/G]\to \B G$ denote the structure morphism. Giving a map
$q\colon X\to \VV(\sE)$ is then equivalent to giving a ring homomorphism
$\Sym(\sE)\to r_*\sO_X$ in $\QCoh(\B G)$, that is, a homomorphism of modules
$\sE\to r_*\sO_X$ in $\QCoh(\B G)$. The surjection $\sE\to \sE_s=\im/\im^2$
lifts to a map $\sE\to r_*\im \to r_*\sO_X$ since $\sE$ is projective.
More precisely, for each $i$ we lift $\sL_i\to \sL_{s,i}=i^*\sN_{D^i/X}$ to
$\sL_i\to r_*\sN_{D^i/X}$ and we lift $\sE'\to \sE'_s\to \im/\im^2$ to $\sE'\to
r_*\im$.  By construction, $q$ is then \'etale at $x$ (this can be checked in
the fiber over $s$) and $q^{-1}(E)=p^{-1}(D)$ in a neighborhood of $x$.  We
conclude by replacing $X$ with an open neighborhood of $x$.
\end{proof}

\end{section}


\begin{section}{Codimension of stackiness}\label{S:codimension}
In this section, we introduce the notion of \emph{codimension of stackiness} at a point on a smooth, tame Artin stack,
and investigate its basic properties.
Roughly, the codimension of stackiness is defined as the dimension of the non-trivial part of the linear representation of the stabilizer on the tangent space at the point in question. 
Here some care must be taken, since the tangent space of a point on an Artin stack need not be a
vector space, but rather a Picard stack.
To avoid working with representations on such stacks,
we will instead take advantage of the fact that the necessary information is encoded into the cotangent complex.
We will also work in a relative sense,
since this will make our proofs in the next section easier.

Let $\mathcal{G}$ be an (absolute) gerbe with finite tame inertia,
and let $\mathcal{E}$ be a finite locally free sheaf on $\mathcal{G}$.
Recall that we have a canonical splitting $\mathcal{E} = \mathcal{E}_\triv  \oplus \mathcal{E}_\nt$
into a trivial and a non-trivial part (Theorem~\ref{T:tame-splitting:QCoh})
and a canonical splitting $\mathcal{P} = \mathcal{P}_\triv \oplus \mathcal{P}_\nt$
for any perfect complex on $\mathcal{G}$ (Theorem~\ref{T:tame-splitting:Dqc}).
Recall that the rank of a perfect complex is a locally constant function.
In particular, the following definition makes sense.

\begin{definition}
\label{def-relevant-rank}
Let $\mathcal{G}$ be a connected gerbe with finite tame inertia.
The \emph{relevant rank} of a perfect complex $\mathcal{P}$
on $\mathcal{G}$ is defined as the rank of the complex $\mathcal{P}_\nt$,
as defined above.
\end{definition}

\begin{lemma}
\label{lem-relevant-bs}
Let $f\colon \mathcal{G'} \to \mathcal{G}$ be a stabilizer preserving morphism of connected gerbes with finite tame inertia,
and let $\mathcal{P}$ be a perfect complex on~$\mathcal{G}$.
Then the relevant rank of $f^*\mathcal{P}$ with respect to $\mathcal{G}'$ equals the relevant rank of $\mathcal{P}$
with respect to~$\mathcal{G}$. 
\end{lemma}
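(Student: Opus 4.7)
The plan is to show that the canonical trivial/non-trivial decomposition of perfect complexes is preserved under pullback along stabilizer preserving morphisms of tame gerbes; once that is established, the equality of relevant ranks is immediate.

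First I would recall that, according to Theorem~\ref{T:tame-splitting:Dqc}, the canonical splitting $\mathcal{P} = \mathcal{P}_\triv \oplus \mathcal{P}_\nt$ on a connected tame gerbe $\mathcal{G}$ arises from the natural action of the inertia stack $I_\mathcal{G}$ on any object of $\Dqc(\mathcal{G})$: tameness makes the category of $I_\mathcal{G}$-representations semisimple in a functorial way, so $\mathcal{P}_\triv$ is the sub-object on which $I_\mathcal{G}$ acts trivially and $\mathcal{P}_\nt$ is its canonical complement. The key point, which I would invoke directly from the construction in the appendix, is that this splitting is functorial in the pair $(\mathcal{G}, \mathcal{P})$ with respect to morphisms of gerbes that are compatible with the inertia action.

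Next I would use the hypothesis that $f$ is stabilizer preserving, i.e., $I_{\mathcal{G}'/\mathcal{G}} = 1$ so that the natural map $I_{\mathcal{G}'} \to f^* I_\mathcal{G}$ is an isomorphism. Under this identification, the action of $I_{\mathcal{G}'}$ on $f^*\mathcal{P}$ is precisely the pullback of the action of $I_\mathcal{G}$ on $\mathcal{P}$. Applying $f^*$ to the decomposition on $\mathcal{G}$ and comparing with the decomposition on $\mathcal{G}'$ then yields the identifications
\[
f^*(\mathcal{P}_\triv) = (f^*\mathcal{P})_\triv, \qquad f^*(\mathcal{P}_\nt) = (f^*\mathcal{P})_\nt.
\]

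Finally, since $\mathcal{G}$ and $\mathcal{G}'$ are connected and the rank of a perfect complex is invariant under pullback, we obtain
\[
\rk(f^*\mathcal{P})_\nt = \rk f^*(\mathcal{P}_\nt) = \rk \mathcal{P}_\nt,
\]
which is the claimed equality of relevant ranks. The only real content is the functoriality of the canonical splitting in Theorem~\ref{T:tame-splitting:Dqc} under stabilizer preserving pullback; I expect this to be the main obstacle, but it should be a direct consequence of the explicit construction given in Appendix~\ref{A:gerbes}, so no further work is needed here.
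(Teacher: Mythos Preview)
Your argument is correct and in fact proves slightly more than needed (that the whole splitting commutes with $f^*$, not just the ranks), but it is organized differently from the paper and leans on a characterization of the splitting that the appendix does not state.

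The paper's proof is shorter and avoids the inertia action entirely. It observes that a stabilizer preserving morphism of gerbes sits in a \emph{cartesian} square over the coarse spaces, writes the relevant rank as $\rk\mathcal{P}-\rk\pi_*\mathcal{P}$ for $\pi\colon\mathcal{G}\to\mathcal{G}_\cs$, and then uses that rank is preserved under pullback together with tor-independent base change for $\pi_*$. Your route instead identifies $(f^*\mathcal{P})_\triv$ with $f^*(\mathcal{P}_\triv)$ via the inertial action; this is valid, but note that Appendix~\ref{A:gerbes} defines the splitting by $\mathcal{P}_\triv=\pi^*\pi_*\mathcal{P}$, not via inertia, and the base-change compatibility of the splitting is only stated for $\QCoh$ (Theorem~\ref{T:tame-splitting:QCoh}\ref{TI:tame-gerbe:exact}), not for $\Dqc$. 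So the ``direct consequence'' you defer to the appendix still needs the cartesian square plus base change for $\pi_*$, which is exactly the paper's argument in disguise.

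One small slip: writing $I_{\mathcal{G}'/\mathcal{G}}=1$ only gives that $I_{\mathcal{G}'}\to f^*I_{\mathcal{G}}$ is a monomorphism (equivalently, $f$ is representable). The isomorphism you use is precisely the \emph{definition} of stabilizer preserving and should be invoked as such, not deduced from triviality of relative inertia.
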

\begin{proof}
Since $f$ is a stabilizer preserving morphism of gerbes,
the induced commutative square formed by the map between the coarse spaces is cartesian.
Let $\pi\colon \mathcal{G} \to \mathcal{G}_\cs$ denote the canonical map to the coarse space.
Then the relevant rank of a perfect complex $\mathcal{P}$ is equal to $\rk \mathcal{P} - \rk \pi_*\mathcal{P}$.
In particular, the result follows since the rank of a perfect complex is preserved under pull-back
and $\pi_*$ respects tor-independent base change.
\end{proof}

Let $X$ be an algebraic stack.
A point $x$ in $X$ may be represented by a morphism
$\xi\colon \mathcal{G} \to X$,
where $\mathcal{G}$ is a singleton algebraic stack.
A common choice for $\mathcal{G}$ is the spectrum of a field or the residual gerbe at the point.
For our purposes, it will be convenient to choose $\mathcal{G}$ slightly differently.

\begin{definition}
\label{def-standard-rep}
Let $f\colon X \to S$ be a morphism of algebraic stacks, and let $x$ be a point of $X$.
We call a morphism $\xi\colon \mathcal{G} \to X$ a \emph{standard representative} for $x$ over $S$
provided that it satisfies the following conditions.
\begin{enumerate}
\item
\label{def-standard-rep-i}
The stack $\mathcal{G}$ is a singleton gerbe and the unique point of $\mathcal{G}$ maps to~$x$.
\item
\label{def-standard-rep-ii}
The morphism $\xi$ is stabilizer preserving over~$S$.
\item
\label{def-standard-rep-iii}
The canonical morphism $I_{\mathcal{G}/S} \to I_{\mathcal{G}}$ is an isomorphism.
\end{enumerate}
\end{definition}

\begin{remark}
\label{rem-standard-rep}
Under the assumption that $\mathcal{G}$ is a gerbe,
Condition~\itemref{def-standard-rep-iii} is equivalent to saying that the composition $f\circ \xi\colon \mathcal{G} \to S$
factors (uniquely up to unique isomorphism) through the coarse space of $\mathcal{G}$.
\end{remark}

\begin{example}
Assume that $S$, as in Definition~\ref{def-standard-rep}, is an algebraic space.
Then the residual gerbe, as defined in~\cite[\sptag{06MU}]{stacks-project},
at a given point would be a natural choice of standard representative,
provided that it exists.
Residual gerbes are known to exist under quite general circumstances
--- for instance if $X$ is quasi-compact and quasi-separated over~$S$ \cite[\sptag{06RD}]{stacks-project}.
\end{example}

\begin{example}
In general, one can always construct a standard representative from a given morphism $x\colon \Spec k \to X$
by taking $\mathcal{G}$ as the residual gerbe of the stack $X\times_S\Spec k$ at the section induced by~$x$,
and letting $\xi\colon \mathcal{G} \to X$ be the inclusion of $\mathcal{G}$ in $X\times_S\Spec k$ composed
with the projection to~$X$.
Note that the residual gerbe always exists in this situation by~\cite[\sptag{06G3}]{stacks-project}.
Also note that in this case,
the gerbe $\mathcal{G}$ is canonically isomorphic to the the classifying stack of the relative
stabilizer $G_{x/S}$ at $x$ over~$S$.
\end{example}

\begin{lemma}
\label{lem-standard-ind}
Let $X \to S$ be a morphism of algebraic stacks and $x \in X$ a point.
Then any pair $\xi, \xi'$ of standard representatives for $x$ can be completed
to a commutative diagram
$$
\xymatrix{
& \mathcal{G}'' \ar[dl]_\eta\ar[dr]^{\eta'}&\\
\mathcal{G} \ar[dr]_\xi & & \mathcal{G}' \ar[dl]^{\xi'}\\
& X & \\
}
$$
such that $\xi\eta = \xi'\eta'$ is a standard representative for~$x$.
In particular, the morphisms $\eta$ and $\eta'$ are stabilizer preserving.
\end{lemma}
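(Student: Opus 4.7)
The strategy is to take $\mathcal{G}''$ to be the residual gerbe at a point of the fiber product $\mathcal{H} := \mathcal{G} \times_X \mathcal{G}'$, with $\eta$ and $\eta'$ induced by the projections $p\colon \mathcal{H}\to \mathcal{G}$ and $p'\colon \mathcal{H}\to \mathcal{G}'$. Both $\xi$ and $\xi'$ have image exactly $\{x\}$, so $\mathcal{H}$ is non-empty. Since stabilizer preservation over $S$ is stable under base change, as follows directly from the exact sequence \eqref{E:inertia-sequence}, the projections $p$ and $p'$, being base changes of the stabilizer preserving $\xi'$ and $\xi$, are themselves stabilizer preserving over $S$; the common composite $\mathcal{H}\to X$ is therefore also stabilizer preserving over $S$.

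Now pick any point $y \in |\mathcal{H}|$ and let $\iota\colon \mathcal{G}''\hookrightarrow \mathcal{H}$ be its residual gerbe; set $\eta = p\iota$ and $\eta' = p'\iota$. Condition \itemref{def-standard-rep-i} for $\mathcal{G}''$ is immediate from the construction. Condition \itemref{def-standard-rep-iii} follows from Remark~\ref{rem-standard-rep}, since the composite $\mathcal{G}''\to \mathcal{G}\to S$ factors through the scheme $\mathcal{G}_\cs$ by the corresponding property of $\mathcal{G}$, and any morphism from an algebraic stack to a scheme factors through the coarse space. For condition \itemref{def-standard-rep-ii}, that $\xi\eta = \xi'\eta'\colon \mathcal{G}''\to X$ is stabilizer preserving over $S$, it suffices to show that $\iota$ is itself stabilizer preserving over $S$ and then compose with the stabilizer preservation of $\mathcal{H}\to X$ established above. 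This last verification is essentially tautological for a residual gerbe: the inertia $I_{\mathcal{G}''}$ is the automorphism group at $y$, while $I_{\mathcal{H}/S}$ at $y$ is the pullback of $I_{X/S}$ from $x$, which is the same group scheme lifted to the residue field.

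Once \itemref{def-standard-rep-ii} is established, the final claim that $\eta$ and $\eta'$ are stabilizer preserving over $S$ follows by applying \eqref{E:inertia-sequence} to $\mathcal{G}''\xrightarrow{\eta}\mathcal{G}\xrightarrow{\xi}X$ and using the stabilizer preservation of both the outer composite $\xi\eta$ and of $\xi$ (and symmetrically for $\eta'$). The main obstacle will be confirming that $\iota$ is stabilizer preserving, since residual gerbes are by convention reduced and could a priori lose non-reduced inertia present in $\mathcal{H}$; the resolution is that $\mathcal{H}$ acquires its inertia by pullback from $X$ via the stabilizer preserving map $\mathcal{H}\to X$, so no non-reduced excess is introduced along the way.
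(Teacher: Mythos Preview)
Your approach is the same as the paper's: take $\mathcal{G}''$ to be the residual gerbe at a point of $\mathcal{G}\times_X\mathcal{G}'$. The difference is in how you verify that $\iota\colon \mathcal{G}''\hookrightarrow \mathcal{H}$ is stabilizer preserving over $S$. Your argument for this is hand-wavy (``essentially tautological'', with an informal comparison of automorphism groups at $y$), and the worry in your final paragraph about reducedness of the residual gerbe possibly losing ``non-reduced inertia'' is a red herring. The clean fact, which is what the paper invokes, is simply that $\iota$ is a \emph{monomorphism}, and monomorphisms of algebraic stacks are always stabilizer preserving over any base: full faithfulness of $g\colon Z\to Y$ means $\mathrm{Aut}_{Z/S}(z)\to \mathrm{Aut}_{Y/S}(g(z))$ is a bijection for every $T$-point $z$, i.e., $I_{Z/S}\to g^*I_{Y/S}$ is an isomorphism. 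With this, $\eta=p\circ\iota$ is a composition of two stabilizer preserving morphisms, which gives condition~\ref{def-standard-rep-ii} directly and makes the detour through the exact sequence in your last paragraph unnecessary.
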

\begin{proof}
One can take $\mathcal{G}''$ as the residual gerbe at any point in~$\mathcal{G}\times_X\mathcal{G}'$.
Indeed, it is clear that $\eta$ (and $\eta'$) will be stabilizer preserving over $S$,
since this property is stable under base change and under composition with monomorphisms.
Since $I_{\mathcal{G}}\to I_S$ is trivial, so is the composition
$I_{\mathcal{G''}}\to I_{\mathcal{G}}\to I_S$ and $I_{\mathcal{G''}/S} \cong
I_{\mathcal{G''}}$.
\end{proof}

Let us briefly recall some of the basic properties of the cotangent complex for an algebraic stack.
We refer to \cite[Chapitre~17]{laumon} and \cite[Section~8]{olsson_sheaves-on-artin-stacks}
for a more complete account.
The cotangent complex $L_{X/S}$ is defined%
\footnote{Strictly speaking, the cited references only construct
the pro-object $\{\tau_{\geq -n} L_{X/S}\}_n$ and assumes that $\pi$ is quasi-compact and quasi-separated.
The first assumption can be removed using~\cite[Example~2.2.5]{laszlo-olsson_six-operations-I} and the second is apparently not used.
In any case, in our applications $\pi$ is smooth and of finite presentation
so we only need $\tau_{\geq 0} L_{X/S}$.}
for any morphism $\pi\colon X \to S$.
If $\pi$ is smooth then $L_{X/S}$ is perfect with tor-amplitude in $[0, 1]$,
and the rank of $L_{X/S}$ equals the relative dimension of $\pi$~\cite[Proposition~17.10]{laumon}.
Moreover, $L_{X/S} \cong \Omega_{X/S}[0]$ in the case when $\pi$ is smooth and
represented by a Deligne--Mumford stack~\cite[Corollaire~17.9.2]{laumon}.
If $\pi$ is a regular immersion, then $L_{X/S} \cong \mathcal{N}_{X/S}[1]$,
where $\mathcal{N}_{X/S}$ denotes the conormal bundle of the immersion~\cite[\sptag{08SK}]{stacks-project}.

For computations with the cotangent complex,
we will frequently use the \emph{fundamental triangle}~\cite[Proposition~17.3(3)]{laumon}
\begin{equation}
\Ld f^*L_{Y/S} \to L_{X/S} \to L_{X/Y} \to,
\end{equation}
induced by a morphism $f\colon X \to Y$ over $S$,
and the \emph{tor-independent base change property}~\cite[Proposition~17.3(4)]{laumon},
which asserts that for any tor-independent cartesian square
\begin{equation}
\vcenter{\xymatrix{
X' \ar[r]^g\ar[d] & X\ar[d]\\
S' \ar[r] & S,}}
\end{equation}
we have $L_{X'/S'} \cong \Ld g^*L_{X/S}$.

We are now ready to make the main definition of the section.
\begin{definition}
\label{def-codim-of-stack}
Let $X \to S$ be a morphism of algebraic stacks,
which is smooth and has finite, tame relative inertia.
The \emph{codimension of stackiness} of $X$ over $S$ at the point $x \in X$ is defined
as the relevant rank (see Definition~\ref{def-relevant-rank})
of the pull-back $\xi^*L_{X/S}$ of the cotangent complex along a standard
representative (see Definition~\ref{def-standard-rep})
$\xi\colon \mathcal{G} \to X$ for~$x$ over~$S$.\\
\emph{Note that in light of Lemma~\ref{lem-relevant-bs} and Lemma~\ref{lem-standard-ind},
the definition does not depend on the choice of standard representative~$\xi$ for~$x$.}
\end{definition}

The following lemma encapsulates the main result of Appendix~\ref{app-mult-type}
and will be used several times.

\begin{lemma}
\label{lem-relevant-gerbe}
Let $f\colon X \to S$ be a gerbe with finite tame inertia,
and let $\xi\colon \mathcal{G} \to X$ be a morphism from a connected gerbe with finite tame inertia.
Then the relevant rank of $\xi^*L_{X/S}$ vanishes.
\end{lemma}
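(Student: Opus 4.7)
The plan is to derive this lemma from the main result of Appendix~\ref{app-mult-type}, namely that $[L_{\B G/k}] = 0$ in $\K_0(\B G)$ for any finite tame group scheme $G$ over an algebraically closed field~$k$. Since the rank of a perfect complex is locally constant and $\mathcal{G}$ is connected, the relevant rank is constant on $\mathcal{G}$, so it suffices to verify its vanishing at a single geometric point.

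First, I would choose a geometric point $\bar x\colon \Spec k \to \mathcal{G}$ with $k$ algebraically closed and form the residual gerbe $\B G_{\bar x}$, where $G_{\bar x}=\operatorname{Aut}(\bar x)$ is a finite tame $k$-group scheme. The inclusion $\iota\colon \B G_{\bar x}\hookrightarrow \mathcal{G}$ is stabilizer preserving between connected gerbes with finite tame inertia, so by Lemma~\ref{lem-relevant-bs} the relevant rank of $\xi^*L_{X/S}$ equals that of $(\xi\circ \iota)^*L_{X/S}$ on $\B G_{\bar x}$. I would then establish the stronger statement that its class in $\K_0(\B G_{\bar x})$ vanishes; by the splitting of Theorem~\ref{T:tame-splitting:Dqc}, $\K_0(\B G_{\bar x})=\K_0^\triv\oplus \K_0^\nt$, and vanishing of the total class forces vanishing of both summands, in particular of the relevant rank.

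Next, let $\bar z\colon \Spec k\to S$ be the image geometric point of $\xi\circ\iota$, and form the fiber $X_{\bar z}:=X\times_S \Spec k$. Since $X\to S$ is an fppf gerbe with finite tame inertia and $k$ is algebraically closed, $X_{\bar z}\cong \B H$ for some finite tame $k$-group scheme~$H$ (all $H^1_{\mathrm{fppf}}$-obstructions to trivializing tame torsors vanish over algebraically closed fields). The composition $\xi\circ\iota$ factors through $X_{\bar z}=\B H$, yielding a morphism $\zeta\colon \B G_{\bar x}\to \B H$; this factorization is the most delicate point, relying on the fact that $X\to S$ is a gerbe, so that the induced homomorphism $G_{\bar x}\to \operatorname{Aut}_S(\bar z)$ factors through the relative inertia $I_{X/S}$ and is therefore trivial. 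By tor-independent base change along the smooth morphism $X\to S$, the pullback of $L_{X/S}$ to $\B H$ equals $L_{\B H/k}$, and hence $(\xi\circ\iota)^*L_{X/S}\cong \zeta^*L_{\B H/k}$.

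Finally, the appendix yields $[L_{\B H/k}]=0$ in $\K_0(\B H)$, and pullback along $\zeta$ transports this vanishing to $\K_0(\B G_{\bar x})$, completing the argument. The main obstacle is the appendix statement itself; everything else is a careful reduction exploiting the local structure of a tame gerbe over an algebraically closed point.
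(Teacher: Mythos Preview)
Your overall strategy---reduce to a gerbe over an algebraically closed field and invoke Proposition~\ref{prop-well-split-gerbe}---is the right one, and coincides with the paper's. The gap is in your factorization step. You claim that $\xi\circ\iota\colon \B G_{\bar x}\to X$ factors through the fiber $X_{\bar z}=X\times_S \Spec k$. This requires the composition $\B G_{\bar x}\to X\to S$ to factor through $\bar z\colon\Spec k\to S$, i.e., that the induced map $G_{\bar x}\to I_S$ at $\bar z$ is trivial. Your justification, that this map ``factors through the relative inertia $I_{X/S}$'', is not correct: $I_{X/S}$ is the \emph{kernel} of $I_X\to f^*I_S$, and there is no reason why the image of $G_{\bar x}$ in $I_X$ should land in it---the morphism $\xi\colon\mathcal{G}\to X$ is arbitrary. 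Concretely, take $S=\B H$, $X=\B G$ for a surjection $G\twoheadrightarrow H$ with kernel $K$, and $\mathcal{G}=\B G$ with $\xi=\id$. Then $X_{\bar z}=\B K$ and the map $\B K\to \B G$ is induced by $K\hookrightarrow G$; there is no $\zeta\colon \B G\to \B K$ making the triangle commute. Since the lemma is applied in Proposition~\ref{prop-cs-functorial} precisely in a situation where $S$ is a stack, this is not a cosmetic issue.

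The paper circumvents this by pulling back $f\colon X\to S$ along the full map $f\circ\xi\colon \mathcal{G}\to S$ (rather than along $\Spec k\to S$), obtaining $g\colon X_{\mathcal{G}}\to\mathcal{G}$ with a tautological section $\sigma$ induced by $\xi$. Then $\xi^*L_{X/S}\cong\sigma^*L_{X_{\mathcal{G}}/\mathcal{G}}$ by tor-independent base change, and both $\mathcal{G}$ and $X_{\mathcal{G}}$ are classifying stacks of finite tame group schemes over $k$. The fundamental triangle for $X_{\mathcal{G}}\to\mathcal{G}\to\Spec k$ together with two applications of Proposition~\ref{prop-well-split-gerbe} then gives the vanishing in $\K_0$. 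Your argument becomes correct if you make this change; alternatively, your direct approach works verbatim whenever $S$ is an algebraic space.
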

\begin{proof}
In light of Lemma~\ref{lem-relevant-bs},
we may assume that $\mathcal{G}$ is a gerbe over $\Spec k$ for some algebraically closed
field~$k$.
Let $g\colon X_\mathcal{G}\to \mathcal{G}$ be the pull-back of $f$ along the composition
$f\circ\xi$,
and let $\sigma\colon \mathcal{G} \to X_\mathcal{G}$ denote the section induced by $f$.
By tor-independent base change,
it suffices to show that the relevant rank of $\sigma^*L_{X_\mathcal{G}/\mathcal{G}}$ vanishes.
Consider the fundamental triangle
\begin{equation}
\label{gerbe-fundamental}
g^*L_{\mathcal{G}/k} \to L_{X_\mathcal{G}/k} \to L_{X_\mathcal{G}/\mathcal{G}} \to
\end{equation}
Since $k$ is algebraically closed, the gerbes $X_\mathcal{G}$ and $\mathcal{G}$
are both isomorphic to classifying stacks of some well-split finite tame group schemes over~$k$.
In particular, the two first terms of \eqref{gerbe-fundamental} are zero in $\K_0(X_\mathcal{G})$
by Proposition~\ref{prop-well-split-gerbe}.
Hence the same must hold for the third term,
which gives the desired vanishing.
\end{proof}

Next we study the functorial properties of codimension of stackiness.

\begin{proposition}
\label{prop-cs-functorial}
Let $\pi\colon X \to S$ be a morphism of algebraic stacks that is smooth with finite and tame relative inertia.
Then the codimension of stackiness is invariant under arbitrary base change $S' \to S$ and under
smooth morphisms $Y \to X$ that are either stabilizer preserving over~$S$ or tame gerbes.
\end{proposition}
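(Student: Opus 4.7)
The plan is to handle the three types of maps — arbitrary base change $S'\to S$, smooth stabilizer-preserving $Y\to X$, and tame gerbes $Y\to X$ — separately, in each case reducing via the fundamental triangle of the cotangent complex to showing that a certain relative cotangent complex has zero relevant rank after pull-back to a standard representative.

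For a base change $g\colon S'\to S$ and a point $x'\in X\times_S S'$ above $x\in X$, I would pick a standard representative $\xi\colon\mathcal{G}\to X$ for $x$ over $S$ and base-change it along a suitable field extension of its coarse space (chosen to accommodate the residue field of $x'$) to produce a standard representative $\xi'\colon\mathcal{G}'\to X\times_S S'$ for $x'$ over $S'$ together with a stabilizer-preserving morphism $h\colon\mathcal{G}'\to\mathcal{G}$. Tor-independent base change of the cotangent complex yields $(\xi')^{*}L_{X\times_S S'/S'}\cong h^{*}\xi^{*}L_{X/S}$, after which Lemma~\ref{lem-relevant-bs} immediately gives the desired equality of relevant ranks.

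For a smooth morphism $f\colon Y\to X$ with $y\in Y$ above $x\in X$, I would choose a standard representative $\eta\colon\mathcal{H}\to Y$ for $y$ over $S$. The fundamental triangle
\[
\eta^{*}f^{*}L_{X/S}\to\eta^{*}L_{Y/S}\to\eta^{*}L_{Y/X}\to
\]
reduces the problem to (a) identifying the first term with the pull-back of $L_{X/S}$ along a standard representative for~$x$ and (b) showing that $\eta^{*}L_{Y/X}$ has zero relevant rank. If $f$ is stabilizer preserving over $S$, then $f\circ\eta$ is itself a standard representative for $x$ over $S$, settling (a); for (b) I would base-change $f$ to a geometric fiber $\Spec k$ of $S$ through which $\eta$ factors and invoke the characterization $Y_k = Y_{k,\cs}\times_{X_{k,\cs}}X_k$ of smooth stabilizer-preserving morphisms of tame stacks, showing that $L_{Y/X}$ pulls back from the coarse space $Y_{k,\cs}$ and therefore acquires trivial inertia action after restriction along $\eta$. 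If $f$ is a tame gerbe, (b) follows immediately from Lemma~\ref{lem-relevant-gerbe}; for (a), the morphism $f\circ\eta$ factors as $\xi\circ h$ through the rigidification $\mathcal{G}:=\mathcal{H}_{\rig/X}$, where $\xi\colon\mathcal{G}\to X$ is a standard representative for $x$ over $S$ and $h\colon\mathcal{H}\to\mathcal{G}$ is itself a tame gerbe, so that pull-back along $h$ respects the canonical trivial/non-trivial splitting and hence preserves the relevant rank.

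The principal technical obstacle I anticipate is the vanishing in part (b) for the stabilizer-preserving smooth case: the coarse-space characterization is formulated for absolutely tame targets, so one must first pass to a geometric fiber of $S$ and then track the identification back to $\mathcal{H}$ through tor-independent base change. The remaining ingredients are largely formal consequences of the fundamental triangle, the tor-independent base change property of the cotangent complex, and the splitting results of Appendix~\ref{A:gerbes}.
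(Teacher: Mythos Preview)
Your overall strategy coincides with the paper's: split into the three cases, use the fundamental triangle for $L_{Y/S}$, and reduce to showing that the pull-back of $L_{Y/X}$ (resp.\ the base-change identification) has vanishing relevant rank, invoking Lemma~\ref{lem-relevant-gerbe} for gerbes and the coarse-space cartesian square for the stabilizer-preserving case. The differences are purely in bookkeeping, and in two places the paper's direction is cleaner than yours. For base change, the paper starts with a standard representative $\xi\colon\mathcal{G}\to X'$ for $x'$ over $S'$ and simply checks that the composite $\mathcal{G}\to X'\to X$ is already a standard representative for $x$ over $S$; this avoids your construction of $\xi'$ from $\xi$ via field extensions. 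For the tame gerbe case, the paper goes in the opposite direction from you: it picks a standard representative $\eta\colon\mathcal{H}\to X$ and takes $\mathcal{G}:=\mathcal{H}\times_X Y$, so that $\mathcal{G}\to\mathcal{H}$ is a gerbe (base change of $f$) and one only needs that pull-back along a gerbe preserves relevant rank (because it induces an epimorphism on inertia); this sidesteps checking that your rigidification $\mathcal{H}_{\rig/X}\to X$ satisfies the standard-representative axioms. For the stabilizer-preserving case the paper likewise base-changes $f$ along the composite $\mathcal{G}\to Y\to X$ rather than to a fiber of $S$, but this is essentially what you describe. None of your choices introduces a genuine gap; they just require a few more verifications than the paper's.
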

\begin{proof}
Let $X'$ be the base change of $X$ along an arbitrary morphism $S'\to S$.
Choose a standard representative $\xi\colon \mathcal{G} \to X'$ over $S'$.
Then we have a commutative diagram
$$
\xymatrix{
I_\mathcal{G} \ar[r]\ar[d] & I_{X'/S'} \ar[r]\ar[d] & I_{X/S} \ar[d] \\ 
\mathcal{G} \ar[r]^\xi\ar[d] & X' \ar[r]^g\ar[d] & X \ar[d]\\
\mathcal{G}_\cs \ar[r] & S' \ar[r] & S\\
}
$$
where the lower left hand square exists by Remark~\ref{rem-standard-rep}.
The same remark gives $I_{\mathcal{G}/S} = I_{\mathcal{G}}$.
The upper left hand square is cartesian by the assumption on $\xi$,
and the upper right hand square is cartesian since the same holds for the
lower right hand square.
It follows that the composition $g\circ\xi$ is a standard representative for the
image of $\xi$ under $g$,
so invariance follows from tor-independent base change of the cotangent complex.

Now assume that $f\colon Y \to X$ is smooth,
and let $\xi\colon \mathcal{G} \to Y$ be a standard representative for a point in~$Y$.
By invariance under base change together with Lemma~\ref{lem-relevant-bs},
we may assume that $S = \mathcal{G}_\cs = \Spec k$, where $k$ is a field. 

First assume that $f$ is stabilizer preserving.
Then it follows that $f\circ \xi$ is a standard
representative for the image of $\xi$ under $f$.
By the triangle
$$
f^*L_{X/S} \to L_{Y/S} \to L_{Y/X} \to
$$
it therefore suffices to show that the relevant rank of $\xi^*L_{Y/X}$ vanishes.
Let $\widetilde{Y}\to \mathcal{G}$ be the base change of $Y\to X$ along the map $f\circ \xi$,
and let $\sigma\colon \mathcal{G} \to \widetilde{Y}$ be the section induced by~$\xi$.
By the base change property of the cotangent complex, it is enough to show
that the relevant rank of $\sigma^* L_{\widetilde{Y}/\mathcal{G}}$ vanishes.
By our assumption, the morphism $\widetilde{Y} \to \mathcal{G}$ is stabilizer preserving.
In particular, $\widetilde{Y}$ is a gerbe, since the same holds for $\mathcal{G}$,
and the commutative square
$$
\xymatrix{
\widetilde{Y} \ar[r]\ar[d]& \widetilde{Y}_{\cs} \ar[d]\\
\mathcal{G} \ar[r] & S\\
}
$$
is cartesian.
Again by the base change property of the cotangent complex,
it suffices to show that the relevant rank of the pull-back of $L_{\widetilde{Y}_{\cs}/S}$ vanishes.
But $\widetilde{Y}_{\cs}$ is representable,
so the map $\mathcal{G} \to \widetilde{Y}_{\cs}$ factors through $S=\Spec k$,
and the result follows.

Finally, we assume instead that $f$ is a gerbe.
Then we may obtain $\xi$ as the base change
\begin{equation}
\label{eq-smooth-functorial}
\vcenter{\xymatrix{
\mathcal{G} \ar[r]^{\smash{\xi}}\ar[d]_\pi & Y \ar[d]^f\\
\mathcal{H} \ar[r]_\eta  & X\\
}}
\end{equation}
of a standard representative $\eta\colon \mathcal{H} \to X$.
Since $\pi$ is a gerbe,
it induces an epimorphism on inertia. 
In particular, pull-back along $\pi$ preserves the relevant rank.
Similarly as before, it therefore suffices to prove that the relevant rank
of $\xi^*L_{Y/X}$ vanishes,
which it does by~Lemma~\ref{lem-relevant-gerbe}.
\end{proof}

In the particular case when we are considering a global quotient stack,
the codimension of stackiness has a rather explicit description in the spirit of the
main idea described in the introduction of this section.
We review some facts about the tangent space of an equivariant algebraic space.

Let $X$ be an algebraic space over a field~$k$,
and let $G$ be a group scheme over~$k$ acting on~$X$.
Recall that $G$ and~$X$ are characterized by their point functors
$$
G(A) = \Mor_S(\Spec A, G), \qquad X(A) = \Mor_S(\Spec A, X),
$$
where $A$ is a $k$-algebra,
and that the action of $G$ on~$X$ is characterized by the induced action of $G(A)$ on $X(A)$,
which is functorial in $A$.
The \emph{tangent bundle} $T_{X/k}$ is the functor $T_{X/k}(A) := X(A[\varepsilon]/\varepsilon^2)$.
It comes with a structure morphism $T_{X/k} \to X$ which is represented by the morphism
$\mathbb{V}_X(\Omega_{X/k}) \to X$.
Note that $T_{X/k}$ is naturally endowed with a $G$-action given on $\Spec A$-points by
$$
G(A)\times T_{X/k}(A) \to T_{X/k}(A) \cong X(A[\varepsilon]/\varepsilon^2), \qquad (\gamma, \xi) \mapsto \gamma'\xi,
$$
where $\gamma'$ denotes the image of $\gamma$ under $G(A) \to G(A[\varepsilon]/\varepsilon^2)$.
Moreover, the structure morphism $T_{X/k} \to X$ is equivariant with respect to this group action.
Given a $k$-point $x \in X(k)$, we also consider the \emph{tangent space} $T_{X/k, x}$ at $x$,
which is the fiber of the morphism $T_{X/k} \to X$ at $x$.
The tangent space is endowed with a canonical action by the stabilizer $G_x$ of $G$ at $x$,
making it a $k$-linear representation of the group scheme~$G_x$.

\begin{proposition}
\label{prop-cs-quotient}
Let $X$ be a smooth algebraic space over a field~$k$,
and let $G$ be a finite tame group scheme over~$k$,
acting on~$X$.
Choose a $k$-point $x$ on $X$, and denote the stabilizer group scheme at $x$ by $G_x$.
Then the codimension of stackiness at the corresponding point of the quotient stack~$[X/G]$ over~$k$
equals the relevant rank of the tangent space $T_{X/k, x}$ of $X$ at $x$ considered as a
$k$-linear $G_x$-representation.
\end{proposition}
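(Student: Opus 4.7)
The plan is to exhibit a canonical standard representative $\xi\colon \B G_x \to [X/G]$ and compute the relevant rank of $\xi^*L_{[X/G]/k}$ via the fundamental triangle coming from the factorization $[X/G] \xrightarrow{f} \B G \to \Spec k$, where $f$ classifies the $G$-torsor $X \to [X/G]$. For the standard representative, I would use that $x$ is fixed by $G_x$, so $x\colon \Spec k \to X$ is $G_x$-equivariant and descends to a section $\B G_x \to [X/G_x]$ of the structure morphism $[X/G_x] \to \B G_x$; composing with the natural map $[X/G_x] \to [X/G]$ induced by $G_x \hookrightarrow G$ gives the morphism~$\xi$. This $\xi$ is manifestly stabilizer-preserving (both sides have inertia~$G_x$ at the relevant point), and $I_{\B G_x/k} = I_{\B G_x}$ because $\Spec k$ has trivial inertia, so $\xi$ is a legitimate standard representative.

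The main computation analyzes the three terms in the triangle
\[
\xi^*f^*L_{\B G/k} \to \xi^*L_{[X/G]/k} \to \xi^*L_{[X/G]/\B G} \to.
\]
For the right-hand term I would apply tor-independent base change twice: first, pulling the atlas $\Spec k \to \B G$ back along $f$ gives a cartesian square with corner $X$, so $L_{[X/G]/\B G}$ corresponds to $\Omega^1_{X/k}$ equipped with its natural $G$-equivariant structure; second, $\xi$ factors through the analogous cartesian square $[X/G_x] = [X/G] \times_{\B G} \B G_x$ and then through the section $\B G_x \to [X/G_x]$ given by~$x$, and combining these identifies $\xi^*L_{[X/G]/\B G}$ with the $G_x$-representation $T^\vee_{X/k, x}$, the action being the natural one inherited from $G_x$ acting on $X$ and fixing~$x$. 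For the left-hand term, Lemma~\ref{lem-relevant-gerbe} applied to the gerbe $\B G \to \Spec k$ and the morphism $f\circ\xi\colon \B G_x \to \B G$ from the connected tame gerbe $\B G_x$ gives that $\xi^*f^*L_{\B G/k}$ has vanishing relevant rank.

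Taking relevant ranks in the triangle then shows that the relevant rank of $\xi^*L_{[X/G]/k}$ equals the relevant rank of $T^\vee_{X/k, x}$. To finish, I would invoke tameness of $G_x$ and Theorem~\ref{T:tame-splitting:QCoh} to obtain a $G_x$-equivariant splitting $T_{X/k, x} = (T_{X/k, x})^{G_x} \oplus (T_{X/k, x})_{\nt}$; since dualization preserves this splitting into trivial and non-trivial parts, the relevant ranks of $T_{X/k, x}$ and $T^\vee_{X/k, x}$ coincide, which is what we want. The main subtlety will be the careful bookkeeping of the $G_x$-equivariant structure through the two applications of tor-independent base change above; once that identification is in hand, everything else is a formal consequence of Lemma~\ref{lem-relevant-gerbe} and the tame splitting result.
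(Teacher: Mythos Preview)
Your proposal is correct and follows essentially the same route as the paper: both use the standard representative $\xi\colon \B G_x \to [X/G]$, the fundamental triangle for $[X/G]\to \B G\to \Spec k$, and Lemma~\ref{lem-relevant-gerbe} to kill the $\B G$-term. The only cosmetic difference is that the paper identifies $\xi^*L_{[X/G]/\B G}$ with the tangent space directly via the isomorphism $T_{[X/G]/\B G}\cong [T_{X/k}/G]$ and the representability by $\mathbb{V}_{[X/G]}(\Omega_{[X/G]/\B G})$, whereas you compute $\xi^*L_{[X/G]/\B G}\cong T^\vee_{X/k,x}$ via tor-independent base change and then invoke the tame splitting to pass from cotangent to tangent.
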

\begin{proof}
We have a $G$-equivariant sequence of morphisms
$$
T_{X/k} \xrightarrow{\tau} X \xrightarrow{\pi} \Spec k,
$$
which descends to a sequence
$$
[T_{X/k}/G] \cong T_{[X/G]/\B G} \xrightarrow{\tau_G} [X/G] \xrightarrow{\pi_G} \B G,
$$
of quotient stacks.
Let $\xi\colon \B G_x \to [X/G]$ be the standard representative of the point on $[X/G]$ over~$k$
induced by~$x$.
Then we have an isomorphism $[T_{X/k, x}/G_x] \cong \xi^*T_{[X/G]/\B G}$ over $\B G_x$.
In particular, since $T_{[X/G]/\B G}$ is represented by $\mathbb{V}_{[X/G]}(\Omega_{[X/G]/\B G})$,
the relevant rank of the tangent space equals the relevant rank of $\xi^*\Omega_{[X/G]/\B G}$,
which is quasi-isomorphic to $\xi^*L_{[X/G]/\B G}$.
Now the composition $[X/G] \xrightarrow{\pi_G} \B G \to \Spec k$ gives a triangle
$$
\pi_G^*L_{\B G/k} \to L_{[X/G]/k} \to L_{[X/G]/\B G} \to.
$$
By Lemma~\ref{lem-relevant-gerbe},
the relevant rank of $\xi^*\pi_G^*L_{\B G/k}$ vanishes.
Hence the relevant ranks of $\xi^*L_{[X/G]/k}$ and $\xi^*L_{[X/G]/\B G}$ are equal,
concluding the proof.
\end{proof}

In fact, we will only need to use Proposition~\ref{prop-cs-quotient} in the following concrete example.
\begin{example}
\label{ex-tangent-space-linear}
Let $G$ be a finite tame group scheme over a field~$k$,
and let $V$ be an $n$-dimensional $G$-representation.
Then we get a $G$-scheme $X = \mathbb{V}_{k}(V)$,
whose $k$-points correspond to elements in the dual space $V^\vee$.
The tangent space at a point corresponding to an element $v \in V^\vee$
is isomorphic to the restriction of the representation $V^\vee$ along
$G_v \to G$, where $G_v$ denotes the stabilizer of $v$.

Let $V^\vee = V^\vee_\triv \oplus V^\vee_\nt$ be the canonical splitting
into the trivial and non-trivial parts of the $G$-representation~$V^\vee$.
From Proposition~\ref{prop-cs-quotient},
we see that the codimension of stackiness at any $k$-point of $[X/G]$
lies in the interval $[0, m]$, where $m = \dim_k V^\vee_\nt$.
Furthermore, the value~$m$ is obtained if and only if~$v \in V^\vee_\triv$.
\end{example}

Finally, we combine the explicit description in Example~\ref{ex-tangent-space-linear}
with the functorial properties from Proposition~\ref{prop-cs-functorial} and the local structure
theorem for tame stacks in Theorem~\ref{T:Luna-slice-finite}
and obtain the following result.

\begin{theorem}
\label{thm-cs-properties}
Let $\pi\colon X \to S$ be a morphism of algebraic stacks which is smooth of finite presentation with finite and tame relative inertia.
Then the codimension of stackiness of $X$ relative $S$ is an upper semi-continuous function taking its values in the
interval~$[0, \dim X/S]$.

Assume that the codimension of stackiness has a maximal value of $m$ on $X$.
Then the locus where this maximal value is obtained has a unique structure of a closed substack $Z \subset X$
which is smooth over~$S$.
Moreover, the substack $Z$ has the following properties:
\begin{enumerate}
\item
it has codimension $m$ in~$X$;
\item\label{TI:cs-properties:gerbe}
it is a gerbe over its relative coarse space over~$S$;
\item
it has normal crossings with any simple normal crossings divisor on~$X$
(Definition~\ref{def-snc}).
\end{enumerate}
\end{theorem}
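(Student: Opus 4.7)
The plan is to reduce all three assertions to the local model furnished by Theorem~\ref{T:Luna-slice-finite}, which exhibits $X$ étale-locally around any point as a quotient $[\A{n}_{S'}/G]$ for a linear action of a finite locally free tame group scheme $G$ on $\A{n}_{S'}$. By Proposition~\ref{prop-cs-functorial} the codimension of stackiness is invariant under stabilizer preserving étale morphisms and under arbitrary base change, so any statement of a local nature descends; the same functoriality is what will glue the locally defined maximum loci into a globally defined closed substack~$Z$.

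Working now in the local model, identify $\A{n}_{S'}$ with $\VV_{S'}(V^\vee)$ for a representation $V$ of~$G$. By Proposition~\ref{prop-cs-quotient} and Example~\ref{ex-tangent-space-linear}, the codimension of stackiness at a geometric point corresponding to $v\in V^\vee$ equals the dimension of the non-trivial part of $V^\vee$ viewed as a representation of the stabilizer~$G_v$. Since stabilizers grow under specialization, $G_v\subseteq G_{v_0}$ whenever $v\leadsto v_0$, so the invariants $(V^\vee)^{G_v}$ contain $(V^\vee)^{G_{v_0}}$ and hence the dimension of the non-trivial part is upper semi-continuous. The bound by $\dim X/S$ is immediate because the relevant rank of a perfect complex cannot exceed its total rank, and for $L_{X/S}$ the latter equals the relative dimension of a smooth morphism.

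For the maximum locus, let $\sE$ correspond to $V$ as in Section~\ref{S:charts} and set $m := \rk \sE_\nt$. The splitting $\sE = \sE_\triv \oplus \sE_\nt$ identifies the $G$-fixed locus with $\VV_{S'}(\sE_\triv) \subseteq \A{n}_{S'}$, smooth of codimension~$m$, and the previous paragraph shows that the codimension of stackiness attains the value $m$ precisely on its image in $[\A{n}_{S'}/G]$. Functoriality of codimension of stackiness forces these local loci to patch into a unique smooth closed substack $Z \hookrightarrow X$ of codimension~$m$, proving~(i). Because the splitting $\sE_\triv \oplus \sE_\nt$ is by construction $G$-equivariant with $G$ acting trivially on $\sE_\triv$, we have the local identification $Z \cong \VV_{S'}(\sE_\triv) \times_{S'} \B G$. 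This is a gerbe over $\VV_{S'}(\sE_\triv)$, which is its relative coarse space over~$S'$ and hence, by étale-local compatibility of coarse spaces, over~$S$ as well; this gives~\itemref{TI:cs-properties:gerbe}.

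For the normal crossings property~(iii), apply the last assertion of Theorem~\ref{T:Luna-slice-finite} at a point $x$ of $Z$ lying on components $D^{i_1},\dots,D^{i_\ell}$ of the given SNC divisor: these pull back to $G$-invariant coordinate hyperplanes $\{x_{i_j}=0\}$, corresponding to rank-one summands $\sL_{i_j}$ of $\sE$. If $\sL_{i_j}\subseteq \sE_\triv$, then $\{x_{i_j}=0\}$ meets $\VV(\sE_\triv)$ transversely in a coordinate hyperplane inside the fixed locus; if $\sL_{i_j}\subseteq \sE_\nt$, then the fixed locus is entirely contained in $\{x_{i_j}=0\}$, and the intersection with $Z$ equals $Z$ itself. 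In both cases every finite intersection of these divisors with $Z$ remains smooth over~$S$, which is the normal crossings condition of Definition~\ref{def-snc}. The main obstacle throughout is the gluing step at the end of the first paragraph; once one trusts that codimension of stackiness is a well-defined invariant under the stabilizer preserving étale maps produced by the chart theorem, everything else reduces to the completely explicit calculation in the linear representation.
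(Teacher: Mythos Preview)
Your reduction to the linear local model and your computations there are fine, and they match the paper's treatment of existence in the chart. The genuine gap is the gluing/uniqueness step, which you flag as the ``main obstacle'' but then dismiss. Functoriality of the codimension of stackiness only tells you that the underlying closed \emph{subsets} of the locally constructed maximal loci coincide under pull-back; it says nothing about their closed substack \emph{structures}. Over a non-reduced base $S$ there are in general many smooth closed substacks supported on a given closed subset, so the local candidates $\VV_{\B G}(\sE_\triv)$ coming from two different charts need not agree on the overlap, and hence need not descend to a global~$Z$. The paper handles this by a deformation-theoretic uniqueness argument: after reducing to $S$ affine noetherian, one shows that any two smooth closed substack structures with the correct special fiber differ by a class in $\Ext^0_{\sO_{Z_0}}(\sN_{Z_0/X_0},\,\sO_{Z_0}\otimes I^i/I^{i+1})$, and that both $\Ext^0$ and the obstruction group $\Ext^1$ vanish because the conormal bundle $\sN_{Z_0/X_0}$ is (stabilizer-preservingly) pulled back from $\sE_\nt$, whose trivial part is zero. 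This is precisely the content that makes the gluing go through, and your write-up omits it entirely.

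A smaller technical point: Theorem~\ref{T:Luna-slice-finite} requires $\kappa(x)/\kappa(s)$ to be finite separable, which is not automatic for a closed point of a smooth stack over an arbitrary base. The paper first passes to a flat neighborhood of $s$ to force $\kappa(x)=\kappa(s)$ before invoking the chart; you should do the same, and then argue (as the paper does) that the statement descends along fppf covers of $S$ using the uniqueness proved above.
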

\begin{proof}
The questions is Zariski-local on $X$ so we may assume that $X$ has relative dimension~$n$
over~$S$.

First we note that the question is fppf-local on $S$ in the following sense.
Let $S' \to S$ and $S''\to S'\times_S S'$ be faithfully flat morphisms locally
of finite presentation. Then the theorem for $S'$ and $S''$ implies the theorem
for $S$. Indeed, we have an induced fppf covering $X':=X\times_S S'\to X$,
which respects the codimension of stackiness by the base change part of Proposition~\ref{prop-cs-functorial}.
Since flat maps are open, this gives us upper semi-continuity of the codimension of stackiness on~$X$.
Let $Z'\subset X'$ be the unique closed substack structure of the maximal locus
that is smooth over $S'$. Uniqueness of $Z'$
implies that there is at most one smooth structure $Z$ over $S$.
Moreover, uniqueness over $S''$ implies that the two pull-backs of $Z'$ along
$S'\times_S S' \rightrightarrows S'$ agree. By descent, we thus deduce the
existence of $Z$ with its desired properties.
In particular, we may reduce to the case where $S$ is affine.
Similarly, the question is local on $X$ for smooth stabilizer preserving
morphisms.

Secondly, we reduce to the case when $S = \Spec R$ is noetherian.
Since $X$ is of finite presentation over $S$,
we have a cartesian diagram
\begin{equation}
\label{eq-cs-us-second}
\vcenter{\xymatrix{
X \ar[r]\ar[d] & X_0\ar[d] \\
S \ar[r] & S_0,
}}
\end{equation}
where $S_0$ is affine and noetherian and $X_0$ over $S_0$ satisfies the
hypothesis of the proposition. Similarly, given a simple normal crossings
divisor on $X$, we may assume that it is a pull-back from a simple normal
crossings divisor on $X_0$.
Again, since the codimension of stackiness respects arbitrary base change,
the theorem for $X_0\to S_0$ gives
upper semi-continuity on $X$ as well as the existence of $Z \subset X$.
Furthermore, it is clear that $Z$ has all the desired properties except possibly for uniqueness.
If $\widetilde{Z}\subset X$ is another closed substack, smooth over $S$,
then $\widetilde{Z}\subset X$ is of finite presentation so
we may choose a suitable diagram as in \eqref{eq-cs-us-second}
such that both $Z$ and $\widetilde{Z}$ are defined over $S_0$.
This gives uniqueness of $Z$.

Thirdly, we reduce to the situation where there exists a diagram
\begin{equation}
\label{eq-cs-us-third}
\vcenter{\xymatrix{
X \ar[dr]\ar[rr] && \mathbb{V}_{\B G}(\mathcal{E})\ar[dl]\\
& S, &
}}
\end{equation}
where $G$ is a finite locally free tame group scheme on $S$,
$\mathcal{E}$
a rank~$n$ locally free sheaf on $\B G$
and the horizontal arrow is \'etale and stabilizer preserving.
Furthermore, we assume that $\mathcal{E}$ decomposes as
$\mathcal{E}^1 \oplus \cdots \oplus \mathcal{E}^r \oplus \mathcal{E}'$,
with each $\mathcal{E}^i$ of rank~1 corresponding to a component of the
given simple normal crossings divisor on~$X$.
Pick a closed point $x\in |X|$ with image $s\in |S|$. Then the residue
field extension $\kappa(x)/\kappa(s)$ is finite. We can thus find a
(quasi-finite) flat morphism $S'\to S$ and points $x'\in |X\times_S S'|$ and
$s'\in |S'|$ above $x$ and $s$ such that $\kappa(x')=\kappa(s')$
\cite[Corollaire~0.10.3.2]{egaIII}.
Hence we may use the local structure theorem for smooth tame stacks (Theorem~\ref{T:Luna-slice-finite})
to reduce to the situation in diagram~\eqref{eq-cs-us-third},
after first replacing $S$ by a flat neighborhood of $s$ and then replacing
$X$ by a stabilizer preserving \'etale neighborhood of $x$.

Now we assume that we are in the situation obtained after the third reduction.
By tameness of $G$,
the bundle $\mathcal{E}$ decomposes canonically as $\mathcal{E}_\triv\oplus\mathcal{E}_\nt$
into a trivial and a non-trivial part (Theorem~\ref{T:tame-splitting:QCoh}).
This decomposition is preserved under base change to any fiber of $S$.
In particular, it follows from the explicit computation in Example~\ref{ex-tangent-space-linear}
that the the linear substack $\mathbb{V}_{\B G}(\mathcal{E}_\triv)\subset
\mathbb{V}_{\B G}(\mathcal{E})$ gives a smooth stack structure of the locus
where the codimension of stackiness
obtains its maximum.
Note that $\mathbb{V}_{\B G}(\mathcal{E}_\triv)$ is a trivial gerbe over
$\mathbb{V}_S(\mathcal{E}_\triv)$
and that its codimension is equal to $\rk \mathcal{E}_\nt$ --- the codimension of stackiness.
Moreover, the non-trivial part $\mathcal{E}_\nt$ is clearly the direct sum of the non-trivial parts
of $\mathcal{E}^i$ and $\mathcal{E}'$,
so the subspace has normal crossings with the given simple normal crossings divisor.
Since the codimension of stackiness respects composition with smooth stabilizer preserving maps,
we obtain $Z\subset X$ as the pull-back of this linear substack to~$X$.
Again all the desired properties for $Z$, except for uniqueness, follows directly.

It only remains to prove that $Z$ is unique.
This is obvious provided that $S = \Spec R$ is reduced.
Otherwise, let $I \subset R$ denote the ideal of nilpotents and define $R_i = R/I^{i + 1}$.
By the noetherian assumption, we get a finite factorization $R = R_j \to \cdots \to R_0 = R/I$
of the reduction homomorphism,
with each $R_{i} \to R_{i-1}$ being the quotient by the square zero ideal~$I^i/I^{i+1}$.
Denote the base change of $X$ to $\Spec R_i$ by~$X_i$.
Let $Z_0 \hookrightarrow X_0$ denote the smooth closed substack where the codimension
of stackiness is maximal. 
We need to show that for each~$i$,
there is an essentially unique cartesian diagram
\begin{equation}
\label{eq-codim-lift}
\vcenter{\xymatrix{
Z_{i-1} \ar[r]\ar[d] & Z_{i} \ar[d]\\
X_{i-i} \ar[r] & X_{i} \\
}}
\end{equation}
such that $Z_{i}$ is smooth over $R_{i}$.
This lifting problem is controlled by
\begin{equation}
\label{eq-exts}
\Ext^j_{\mathcal{O}_{Z_0}}(\mathcal{N}_{Z_0/X_0}, \mathcal{O}_{Z_0}\otimes_{R_0}I^i/I^{i+1}),
\end{equation}
where $\mathcal{N}_{Z_0/X_0}$ denotes the conormal bundle associated to the inclusion
$Z_0 \hookrightarrow X_0$. This follows from a slight variation of~\cite[Theorem~1.4]{olsson_def-theory-of-stacks}:
instead of assuming that the morphism
$Z_0\to X_0$ is flat, it is sufficient that $Z_0$ and $X_0$ are flat over a
common base $S_0$ and that $X_i\to S_i$ are flat deformations of $X_0\to S_0$.
More specifically, the obstructions to finding lifts as in \eqref{eq-codim-lift} live
in~$\Ext^1$,
and the set of liftings form a torsor under $\Ext^0$
provided that the lifting problem is unobstructed.

The ext-groups \eqref{eq-exts} are given as the cohomology groups of the complex
$$
\Rd\Hom_{\mathcal{O}_{Z_0}}(\mathcal{N}_{Z_0/X_0}, \mathcal{O}_{Z_0}\otimes_{R_0}I^i/I^{i+1}),
$$
which is isomorphic to 
$\Rd\Gamma(Z_0, \mathcal{N}_{Z_0/X_0}^\vee) \otimes^{\Ld}_{R_0}I^i/I^{i+1}$,
by dualization combined with the projection formula.
Let $\pi\colon Z_0 \to (Z_0)_\cs$ denote the canonical projection to the coarse space.
Since $Z_0$ is tame, the higher direct images of $\pi_*$ vanish,
so it suffices to show that the sheaf $\pi_*\mathcal{N}_{Z_0/X_0}^\vee$ vanishes.
But this follows from the fact that $\mathcal{N}_{Z_0/X_0}$ is the pull-back of the sheaf
$\mathcal{E}_\nt$ on $\B G$ along a stabilizer preserving morphism,
and that the trivial part of $\mathcal{E}_\nt$ is zero,
which concludes the proof.
\end{proof}


\end{section}


\begin{section}{Divisorialification}\label{S:divisorialification}
In this section,
we generalize the divisorialification algorithm in
\cite[Algorithm~C]{bergh_functorial-destackification}
to algebraic stacks with tame stabilizers that are not necessarily abelian.

Let $S$ be an algebraic stack,
and let $(X, D)$ be a standard pair over~$S$ (see Definition~\ref{def-standard-pair}).
Composing the morphism $\delta_{D/S} \colon X \to [\mathbb{A}^n_S/\Gm^n]$ in
Remark~\ref{rem-snc-morphism} with
the canonical morphism $[\mathbb{A}^n_S/\Gm^n] \to \B\Gm^n \times S$ yields a morphism
\begin{equation}
\label{eq-snc-torsor}
\tau_{D/S}\colon X \to \B \Gm^n \times S.
\end{equation}
The corresponding morphism of inertia stacks induces a group homomorphism 
\begin{equation}
\label{eq-divisorial-rep}
\rho_{D/S}\colon I_{X/S}\to \Gm^n \times X
\end{equation}
over~$X$.
Here we have used that $\Gm^n$ is abelian,
which implies that the inertia stack of $\B \Gm^n$ can be identified with $\Gm^n \times \B \Gm^n$.

\begin{definition}
\label{def-divisorial-index}
Let $S$ be an algebraic stack and let $(X, D)$ be a standard pair over $S$.
Assume that the relative inertia $I_{X/S}$ is finite and tame.
The \emph{divisorial index} of the pair $(X, D)$ over $S$ at a point $x \in X$
is defined as the codimension of stackiness (see Definition~\ref{def-codim-of-stack}) of
the morphism \eqref{eq-snc-torsor} at $x$.
We say that the pair $(X, D)$ is \emph{divisorial} provided that the divisorial index
is everywhere zero.
\end{definition}

\begin{remark}
\label{rem-di-empty-ordering}
The divisorial index only depends on the underlying divisor of~$D$.
In particular, the ordering of the components of~$D$ is irrelevant.
\end{remark}

\begin{remark}
\label{rem-divisorial}
It is not hard to see that the definition of divisorial index given in Definition~\ref{def-divisorial-index}
is equivalent to the one given in \cite[Definition~7.6]{bergh_functorial-destackification} provided that
$S$ is a scheme and $X$ has diagonalizable stabilizers.
\end{remark}

\begin{proposition}
\label{prop-divisorial}
Let $S$ be an algebraic stack and let $(X, D)$ be a standard pair over $S$.
Assume that the relative inertia $I_{X/S}$ is finite and tame and that
$(X, D)$ is divisorial over $S$.
Then we have a total rigidification $\pi\colon (X, D) \to (X_{\rig/S}, D_{\rig/S})$ relative~$S$
in the sense of Section~\ref{S:sp-rigidification}.
Moreover, the following statements hold:
\begin{enumerate}
\item
\label{it-p-div-coarse}
$\tau_{D/S}$ factors as $\tau_{D_{\rig/S}/S}\circ \pi$ identifying $\tau_{D_{\rig/S}/S}$
with the relative coarse space of $\tau_{D/S}$;
\item
\label{it-p-div-representable}
$X_{\rig/S} \to S$ is representable away from $D_{\rig/S}$;
\item
\label{it-p-div-representation}
$\rho_{D_{\rig/S}/S}\colon I_{X_{\rig/S}/S}\inj \Gm^n\times X_{\rig/S}$ is
a monomorphism.
\item
\label{it-p-div-divisorial}
$(X_{\rig/S}, D_{\rig/S})$ is a divisorial standard pair over $S$;
\end{enumerate}
In particular, the stack $X_{\rig/S}$ has diagonalizable relative stabilizers over $S$.
\end{proposition}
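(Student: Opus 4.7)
\emph{Overall plan.} I will exhibit the total rigidification as the quotient $\pi \colon X \to X_{\rig/S} := X/K$, where $K := \ker(\rho_{D/S})$ is the fppf kernel of the representation~\eqref{eq-divisorial-rep}. The bulk of the work is to verify that $K$ is a finite, flat, linearly reductive normal subgroup of $I_{X/S}$; granting this, properties~(i)--(iv) follow from a fairly formal unwinding of the rigidification formalism of Section~\ref{sec-rigid}. Normality of $K$ is automatic as a kernel, so the substantive local content is flatness and linear reductivity. After a smooth-local reduction to the case where $S$ is a scheme, I would invoke the \etale{} charts of Theorem~\ref{T:Luna-slice-finite}: on a stabilizer-preserving chart $W \to X$ with $W \to [\A{n}_{S'}/G]$ and the components of $D$ pulling back to $G$-invariant coordinate hyperplanes with associated characters $\chi_i \colon G \to \Gm$, the representation $\rho_{D/S}$ pulls back to the morphism induced by $(\chi_1, \ldots, \chi_n)$. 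Thus $K$ pulls back to $\A{n}_{S'} \times_{S'} H$ with $H := \ker(G \to \Gm^n)$, and $H$ is finite, flat, and linearly reductive by the structure theory for tame finite group schemes (both diagonalizable and \'etale pieces have tame character kernels). Descent then gives the corresponding properties for $K \to X$.

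\emph{Rigidification and properties (i), (iii), (iv).} By Section~\ref{sec-rigid}, the flat normal subgroup $K \subset I_{X/S}$ yields a factorization $\pi \colon X \to X' := X/K$ through a gerbe, with $I_{X'/S} = I_{X/S}/K$. Property~(iii) is immediate: $\rho_{D'/S}$ is obtained from $\rho_{D/S}$ by killing its kernel and is therefore a monomorphism, so the stabilizers of $X'$ embed into $\Gm^n$ and are diagonalizable, giving the final assertion. For~(i), the containment $K \subseteq \ker(\rho_{D/S})$ forces $\tau_{D/S}$ to factor through $\pi$ as $\tilde\tau \circ \pi$, and comparing the defining sections of the $\sO_X(D^i)$ identifies $\tilde\tau$ with $\tau_{D'/S}$. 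The relative inertia of $\tilde\tau$ is $\ker(\rho_{D'/S}) = 1$ by~(iii), so $\tilde\tau$ is representable; combined with $\pi$ being a gerbe (hence a coarsening) and the orthogonal factorization system recalled in Section~\ref{S:preliminaries}, this realizes $\tau_{D/S} = \tilde\tau \circ \pi$ as the relative coarse space of $\tau_{D/S}$ over $\B\Gm^n \times S$. Property~(iv) follows automatically, since a smooth representable morphism has codimension of stackiness identically zero.

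\emph{Totalness and property (ii).} To confirm that $X'$ is the \emph{total} rigidification, I would apply the criterion at the end of Section~\ref{sec-rigid}: it suffices to exhibit a schematically dense open of $X'$ over which $X' \to S$ is representable. Each $\sO_X(D^i)$ carries a canonical $I_{X/S}$-invariant global section vanishing precisely on $D^i$, which trivializes the bundle equivariantly off $D^i$; the character $\chi_i$ is therefore trivial on $X \smallsetminus D^i$. Consequently $\rho_{D/S}$ is identically trivial on $X \smallsetminus D$, so $K|_{X \smallsetminus D} = I_{X/S}|_{X \smallsetminus D}$ and $X' \to S$ is representable over the schematically dense open $X' \smallsetminus D'$. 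This simultaneously proves~(ii) and confirms $X' = X_{\rig/S}$.

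\emph{Principal difficulty.} The only real obstacle is the first step: \emph{a priori} the fppf kernel of a morphism between finite flat group schemes over a stack is merely a closed subscheme, so to establish flatness and linear reductivity of $K$ one is genuinely forced to pass through the explicit local model of Theorem~\ref{T:Luna-slice-finite} together with the structure theory for tame finite group schemes. Once flatness of $K$ is secured, the remainder of the proof is essentially a formal manipulation of the rigidification formalism.
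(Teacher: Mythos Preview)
Your overall architecture is sound and, once repaired, coincides with the paper's argument: you build the rigidification by quotienting out $K=\ker(\rho_{D/S})=I_{X/\B\Gm^n\times S}$, whereas the paper passes directly to the relative coarse space $X_{\cs/\B\Gm^n\times S}$ and invokes Theorem~\ref{thm-cs-properties}\ref{TI:cs-properties:gerbe} to see that $X$ is a gerbe over it. These produce the same object.

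The genuine gap is in your flatness argument for $K$, and it is exactly where you fail to use the hypothesis that $(X,D)$ is divisorial---indeed, your proof as written never invokes it. On a chart as in Theorem~\ref{T:Luna-slice-finite} the ambient affine space is $\A{d}_{S'}$ with $d$ the relative dimension of $X$, which has no reason to equal $n$; only those coordinates $x_1,\dots,x_m$ with $m\le d$ correspond to components of $D$ passing through the centre, and $G$ may act nontrivially on the remaining $x_{m+1},\dots,x_d$ via the summand $\sE'$. Setting $H=\ker\bigl((\chi_1,\dots,\chi_m)\colon G\to\Gm^m\bigr)$, the pull-back of $K$ over the atlas $\A{d}$ is not $H\times\A{d}$ but the $H$-stabilizer scheme $\{(h,v):hv=v\}$, which is constant only when $H$ acts trivially on all of $\A{d}$. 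For a concrete failure without the divisorial hypothesis, take $G=\Gmu_2$ acting by sign on $\A{1}$ with $D=\emptyset$: then $H=G$, and $K=I_{X/S}$ jumps from trivial to $\Gmu_2$ at the origin.

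What makes the computation go through is precisely that the divisorial index vanishes: unwinding Definition~\ref{def-divisorial-index} at the chart centre, this says that the $H$-representation on the tangent space $\A{d}$ has no nontrivial part, i.e.\ $H$ acts trivially on all of $\A{d}$. With this inserted your argument runs. The paper avoids redoing this local analysis by citing Theorem~\ref{thm-cs-properties}\ref{TI:cs-properties:gerbe} directly: codimension of stackiness zero along $\tau_{D/S}$ means $X$ is a gerbe over its relative coarse space over $\B\Gm^n\times S$, which is exactly the flatness of $K=I_{X/\B\Gm^n\times S}$ you are after. Your arguments for \ref{it-p-div-coarse}--\ref{it-p-div-divisorial} and for totality of the rigidification are correct.
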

\begin{proof}
By assumption, the codimension of stackiness of $X$ relative $\B \Gm^n\times S$ is identically zero.
Hence $X$ is a gerbe over its relative coarse space over $\B \Gm^n\times S$ by Theorem~\ref{thm-cs-properties}~\itemref{TI:cs-properties:gerbe}.
We denote this gerbe, which is clearly tame, by $\pi\colon X\to X_{\rig/S}$.
The identity $\pi^{-1}(D_{\rig/S}) = D$ gives \ref{it-p-div-coarse}.
Since $\tau_{D_{\rig/S}/S}$ is representable,
the same holds, \emph{a fortiori},
for the morphism $\delta_{D_{\rig/S}/S}\colon X_{\rig/S} \to [\mathbb{A}^n_S/\Gm^n]$
corresponding to $D_{\rig/S}$,
which implies \ref{it-p-div-representable}.
In particular, since the complement of $D_{\rig/S}$ in $X_{\rig/S}$ is schematically dense,
the rigidification $\pi$ is indeed total.
The kernel of $\rho_{D_{\rig/S}/S}$ is the relative inertia $I_{X_{\rig/S}/\B \Gm^n \times S}$.
This vanishes since $\tau_{D_{\rig/S}/S}$ is representable, which gives \ref{it-p-div-representation}.
By Proposition~\ref{prop-cs-functorial},
the codimension of stackiness is invariant under tame gerbes,
which gives \ref{it-p-div-divisorial}.
\end{proof}


\begin{proposition}
\label{prop-di-blow-up}
Let $S$ be an algebraic stack, and let $(X, D)$ be a standard pair over $S$ (see Definition~\ref{def-standard-pair}).
\begin{enumerate}
\item\label{PI:di:bound}
  The divisorial index at any point in $X$ is a natural number bounded by the relative dimension
of $X$ over~$S$.
\item\label{PI:di:usc}
  The divisorial index with respect to $(X, D)$ is an upper semi-continuous function on~$|X|$.
\item\label{PI:di:center}
  If the divisorial index obtains a maximum~$m$,
then the locus where this maximum is obtained admits a unique structure $Z \subset X$
of a closed substack that is smooth over~$S$. The closed substack $Z$ has codimension $m$ and normal crossings with $D$.
Furthermore, the divisorial index of the transform $(\widetilde{X}, \widetilde{D})$ of the blow-up of
$(X, D)$ in $Z$ is everywhere strictly smaller than~$m$.
\end{enumerate}
\end{proposition}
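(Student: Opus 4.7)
The plan is to recognize the divisorial index as the codimension of stackiness of the morphism $\tau_{D/S}\colon X\to \B\Gm^n\times S$ from~\eqref{eq-snc-torsor}, and to deduce everything from Theorem~\ref{thm-cs-properties} together with an explicit analysis in the linear charts supplied by Theorem~\ref{T:Luna-slice-finite}. First I would verify that $\tau_{D/S}$ satisfies the hypotheses of Theorem~\ref{thm-cs-properties}: it is smooth because $\delta_{D/S}$ is (by Remark~\ref{rem-snc-morphism}, since $D$ has simple normal crossings over~$S$) and $[\A{n}_S/\Gm^n]\to \B\Gm^n\times S$ is smooth; moreover $I_{X/\B\Gm^n\times S}$ is a subgroup scheme of the finite tame $I_{X/S}$ via~\eqref{E:inertia-sequence}, hence itself finite and tame.

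Parts~\itemref{PI:di:bound} and~\itemref{PI:di:usc} then follow from Theorem~\ref{thm-cs-properties}, with one caveat: the theorem only provides a bound by the relative dimension of $\tau_{D/S}$ rather than of $X/S$. For the sharper bound I would use the fundamental triangle
\[
\tau_{D/S}^*L_{\B\Gm^n\times S/S}\to L_{X/S}\to L_{X/\B\Gm^n\times S}\to
\]
and observe that on any standard representative $\xi\colon \mathcal{G}\to X$ over $\B\Gm^n\times S$, the complex $\xi^*L_{\B\Gm^n\times S/S}$ has trivial inertia action: the composition $I_\mathcal{G}\to I_{\B\Gm^n\times S}\to I_{\B\Gm^n}$ is trivial (the first map by the stabilizer preserving property of~$\xi$), and the adjoint action of $\Gm^n$ on its Lie algebra is trivial since $\Gm^n$ is abelian. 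Hence the non-trivial parts of $\xi^*L_{X/S}$ and $\xi^*L_{X/\B\Gm^n\times S}$ agree, so the divisorial index equals the relevant rank of $\xi^*L_{X/S}$, which is bounded by $\rk L_{X/S}=\dim_S X$.

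For part~\itemref{PI:di:center}, Theorem~\ref{thm-cs-properties} immediately supplies the unique smooth closed substack $Z\subset X$ of codimension $m$; it is smooth over~$S$ by composition with the smooth $\B\Gm^n\times S\to S$. The normal crossings condition becomes transparent in a local chart $X=[V/G]$ provided by Theorem~\ref{T:Luna-slice-finite}: writing $V=V_1\oplus\cdots\oplus V_m\oplus V'$ with $V_i$ the lines corresponding to the invariant hyperplanes $\{x_i=0\}\leftrightarrow D^i$ and characters $\chi_i\colon G\to\Gm$, setting $G'=\ker\bigl((\chi_i)\colon G\to\Gm^m\bigr)$ and splitting $V'=V'_\triv\oplus V'_\nt$ as a $G'$\nobreakdash-representation, Example~\ref{ex-tangent-space-linear} combined with the functoriality of Proposition~\ref{prop-cs-functorial} identifies $Z$ locally with $[V_\triv/G]$ for $V_\triv=V_1\oplus\cdots\oplus V_m\oplus V'_\triv$. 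Since $V_\triv$ is a coordinate subspace containing every $V_i$, each intersection $V_\triv\cap\bigcap_{j\in J}D^j$ is again a coordinate subspace of the expected codimension, smooth over~$S$, giving the required normal crossings.

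The main obstacle will be the final assertion that the blow-up strictly drops~$m$. Working in the same chart, the transform is $[\widetilde V/G]$ with $\widetilde V\cong \mathrm{Bl}_0(V'_\nt)\times V_\triv$ and exceptional divisor $E\cong\mathbb{P}(V'_\nt)\times V_\triv$. At a point $p=([v],z)\in E$, the line bundle $\sO(E)|_p$ carries the character $\chi_E$ opposite to the action $\psi$ of the stabilizer $G_p$ on $\ell=\langle v\rangle$, so the new divisorial kernel is $G''_p=G_p\cap G'\cap\ker\psi$. Using the splitting $T_p\widetilde V\cong(\ell^\vee\otimes V'_\nt/\ell)\oplus\ell\oplus V_\triv$, each summand except $\ell^\vee\otimes V'_\nt/\ell$ is trivial under $G''_p$ by construction, while the remaining summand has dimension $m-1$, bounding the divisorial index at $p$ by $m-1$. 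Away from $E$ the transform is unchanged and the new character $\chi_E$ contributes trivially to the divisorial representation, so the divisorial index equals the old one, already $<m$ by maximality of $Z$. The étale-local analysis patches using Proposition~\ref{prop-cs-functorial} together with the compatibility of blow-ups with smooth stabilizer preserving base change.
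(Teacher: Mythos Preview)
Your proof is correct, and for parts~\itemref{PI:di:bound}--\itemref{PI:di:usc} and the first half of~\itemref{PI:di:center} it essentially matches the paper's: both recognise the divisorial index as the codimension of stackiness of $\tau_{D/S}$ and invoke Theorem~\ref{thm-cs-properties}, with the same fundamental-triangle trick for the sharp bound by $\dim_S X$. The genuine difference is in how you show the blow-up strictly lowers the maximum. You pass to a linear chart via Theorem~\ref{T:Luna-slice-finite}, identify $Z$ with the $G'$-fixed subspace $V_\triv$, and compute the tangent space of the blow-up at a point of the exceptional divisor explicitly as $(\ell^\vee\otimes V'_\nt/\ell)\oplus\ell\oplus V_\triv$; this is concrete and geometrically transparent, but it requires checking that the \'etale charts remain stabilizer preserving over $\B\Gm^n\times S$ and implicitly uses a relative version of Proposition~\ref{prop-cs-quotient} (your appeal to Example~\ref{ex-tangent-space-linear} as stated computes the absolute invariant, not the one relative to $\B\Gm^n$). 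The paper instead stays global: after replacing $S$ by $\B\Gm^n\times S$ it considers the tower $E\to Z\to Z_\cs\to S$ together with the new divisor map $\tau_E\colon\widetilde{X}\to\B\Gm\times S$, and chains fundamental triangles to show the divisorial index at a point of $E$ equals the relevant rank of $L_{E/Z}$, which is at most $m-1$ since $E\to Z$ has relative dimension $m-1$. The crucial non-obvious input there is that $Z$ is a gerbe over $Z_\cs$ (Theorem~\ref{thm-cs-properties}\itemref{TI:cs-properties:gerbe}), making $\{L_{Z/Z_\cs}\}$ vanish by Lemma~\ref{lem-relevant-gerbe}. Your explicit computation buys a direct picture of why the bound drops by exactly one; the paper's argument avoids re-opening charts and uses only formal properties of the cotangent complex. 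A minor point: you overload $m$ as both the number of divisor components through the point and the maximal divisorial index.
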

\begin{proof}
First we prove \itemref{PI:di:bound}.
Let $G$ denote the torus $\Gm^n \times S$ over~$S$.
Consider the fundamental triangle
\begin{equation}
\label{eq-fundamental-div}
\tau_D^*L_{\B G/ S} \to L_{X/S} \to L_{X/\B G} \to
\end{equation}
associated to the composition $X \to \B G \to S$.
Given a point $x \in X$, we choose a sequence of morphisms $\mathcal{G} \to \mathcal{G}' \to X$
such that the second morphism $\xi'$ is a standard representative for $x$ relative~$S$
and the composition $\xi$ is a standard representative for $x$ relative $\B G$.
The divisorial index is the relevant rank of $\xi^*L_{X/\B G}$.
This is equal to the relevant rank of $\xi^*L_{X/S}$ since the first term
in \eqref{eq-fundamental-div} is defined over $\B G$.
This is in turn bounded above by the relevant rank of $(\xi')^*L_{X/S}$,
which is the codimension of stackiness of $X$ at $x$ relative~$S$.
In particular, this number is bounded by the relative dimension of $X$ over $S$
by Theorem~\ref{thm-cs-properties}.

For the rest of the statements of the proposition,
we may replace the morphism $X \to S$ with $\tau_{D/S}\colon X \to \B G$
and assume that $D = \emptyset$.
In particular, \itemref{PI:di:usc} and the first part of \itemref{PI:di:center}
is a direct consequence of the
main properties of the codimension of stackiness described in Theorem~\ref{thm-cs-properties}.
It remains to prove that the divisorial index decreases after the blow-up.
This question is local on $S$, so we may assume that~$S$ is a scheme.
Consider the diagram
\begin{equation}
\label{eq-div-blow-up}
\vcenter{\xymatrix{
E \ar[r]\ar[d] & \widetilde{X} \ar[d]\ar[dr]^{\tau_E} &\\
Z \ar[r]\ar[d] & X \ar[d] & \B \Gm\times S \ar[dl]\\
Z_\cs \ar[r] & S &
}}
\end{equation}
where $E$ is the exceptional divisor of the blow-up,
$Z_\cs$ is the coarse space of $Z$,
and $\tau_E$ is the canonical map corresponding to the line bundle~$\mathcal{O}_{\widetilde{X}}(E)$.
Let $x \in E$ be a point.
It suffices to prove that the codimension of stackiness of $X$ at $x$ relative $\B \Gm\times S$ is strictly
smaller than $m$.

Let $\xi\colon \mathcal{G}\to E$ be a standard representative of~$x$ relative $\B \Gm\times S$,
and view \eqref{eq-div-blow-up} as a diagram in the undercategory of $\mathcal{G}$.
Given a perfect complex $\mathcal{P}$ on any of the stacks in the diagram \eqref{eq-div-blow-up},
we denote the relevant rank of its pull-back to $\mathcal{G}$ by $\{\mathcal{P}\}$.
We need to prove that $\{L_{\widetilde{X}/\B \Gm\times S}\} < m$.
By the fundamental triangle for the cotangent complex, we have
$$
\{L_{\widetilde{X}/\B \Gm\times S}\} = \{L_{\widetilde{X}/S}\} - \{L_{\B \Gm\times S/S}\} = \{L_{\widetilde{X}/S}\}
$$
where the last equality follows since $L_{\B \Gm\times S/S}$ is defined over $\B \Gm\times S$.
Using the fundamental triangle again,
we get 
$$
\{L_{E/S}\} = \{L_{E/\widetilde{X}}\} + \{L_{\widetilde{X}/S}\} = \{L_{\widetilde{X}/S}\},
$$
where the last equality follows from the fact that $L_{E/\widetilde{X}}$ is the pull-back of $\mathcal{O}_{\widetilde{X}}(-E)$
concentrated in degree $-1$, and that this line bundle is defined over $\B \Gm\times S$.
Similarly, we have
\begin{equation}
\label{eq-es-sum}
\{L_{E/S}\} = \{L_{E/Z}\} + \{L_{Z/Z_\cs}\} + \{L_{Z_\cs/S}\}.
\end{equation}
Here $\{L_{Z/Z_\cs}\}$ vanishes by Lemma~\ref{lem-relevant-gerbe} since $Z$ is a gerbe over $Z_\cs$,
and $\{L_{Z_\cs/S}\}$ vanishes since $Z_\cs$ is representable.
It follows that the codimension of stackiness at $x$ equals $\{L_{E/Z}\}$,
which is strictly smaller than $m$ since $L_{E/Z} \cong \Omega_{E/Z}[0]$ and
$\Omega_{E/Z}$ is locally free of rank $m-1$.
\end{proof}

\begin{theorem}[Divisorialification]\label{T:divisorialification}
Let $S$ be an arbitrary algebraic stack and let $(X, D)$ be a standard pair
(see Definition~\ref{def-standard-pair}).
Assume that $X$ has relative dimension $\leq d$ over~$S$
and that the relative inertia $I_{X/S}$ is finite and tame.
Then there exists a smooth ordinary blow-up sequence
\[
\Pi\colon (X_n, D_n) \to \cdots \to (X_0, D_0) = (X, D)
\]
with $n \leq d$, such that $(X_n, D_n)$ is divisorial.

The construction is functorial with respect to arbitrary base change~$S' \to S$
and with respect to smooth morphisms $X' \to X$ that are either stabilizer preserving over~$S$ or tame gerbes.
\end{theorem}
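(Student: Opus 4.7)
The plan is a straightforward induction: at each step we blow up the unique maximal locus provided by Proposition~\ref{prop-di-blow-up}, and use the fact that the maximum of the divisorial index drops strictly to conclude that the process terminates in at most $d$ steps.

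More concretely, set $(X_0,D_0):=(X,D)$ and let $m_i$ denote the maximal value taken by the divisorial index on $X_i$; by Proposition~\ref{prop-di-blow-up}\itemref{PI:di:bound} we have $m_0\le d$. If $m_i>0$, Proposition~\ref{prop-di-blow-up}\itemref{PI:di:center} produces a unique smooth closed substack $Z_i\subset X_i$ of codimension $m_i$ having normal crossings with $D_i$, on which the divisorial index is maximal. Let $\pi_{i+1}\colon(X_{i+1},D_{i+1})\to(X_i,D_i)$ be the transform of the blow-up centered in $Z_i$ in the sense of Section~\ref{S:sp-blow-up}. The same proposition guarantees $m_{i+1}<m_i$. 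We stop as soon as $m_n=0$, at which point $(X_n,D_n)$ is divisorial by Definition~\ref{def-divisorial-index}; the chain $m_0>m_1>\cdots$ of non-negative integers forces $n\le m_0\le d$. At each step, smoothness of $Z_i$ over $S$ and the normal-crossings condition ensure that $(X_{i+1},D_{i+1})$ is again a standard pair over $S$; the blow-up is representable and is an isomorphism over a dense open, so $I_{X_{i+1}/S}$ remains finite and tame and the relative dimension is unchanged, and the inductive hypothesis persists.

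For functoriality, note that the divisorial index is defined via the codimension of stackiness of $\tau_{D/S}\colon X\to\B\Gm^n\times S$, which by Proposition~\ref{prop-cs-functorial} is preserved under arbitrary base change $S'\to S$ and under smooth morphisms $X'\to X$ that are either stabilizer preserving over $S$ or tame gerbes. The morphism $\tau_{D/S}$ is itself formed intrinsically from the line bundles $\mathcal{O}_X(D^i)$ together with their canonical sections, and hence commutes with both kinds of pull-back, as does its base change along $S'\to S$. Combined with the uniqueness of the smooth closed substack structure of $Z_i$ supplied by Proposition~\ref{prop-di-blow-up}\itemref{PI:di:center}, this shows that $Z_i$ pulls back to the corresponding center in the base-changed or pulled-back sequence; since ordinary blow-ups in smooth centers commute with flat pull-back and with smooth pull-back, an induction on $i$ transports the entire sequence through either operation, giving the desired functoriality.

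The essential content of the theorem has already been carried out in Proposition~\ref{prop-cs-functorial}, Theorem~\ref{thm-cs-properties}, and Proposition~\ref{prop-di-blow-up}, so no genuine obstacle remains. The only bookkeeping point to watch is the appending of the exceptional divisor to the ordered simple normal crossings divisor at each step, which is handled by the conventions of Section~\ref{S:sp-blow-up}; all subsequent divisorial indices, centers, and transforms are computed with respect to this enlarged divisor.
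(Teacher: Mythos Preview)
Your proposal is correct and follows exactly the paper's approach: iterate the blow-up in the maximal locus supplied by Proposition~\ref{prop-di-blow-up} until the divisorial index vanishes, using the strict decrease to bound the length by~$d$. The paper's own proof is a terse three sentences; your version simply fills in the bookkeeping (persistence of the standard-pair hypotheses, the functoriality argument via Proposition~\ref{prop-cs-functorial} and uniqueness of the centers) that the paper leaves implicit.
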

\begin{proof}
We construct the blow-up sequence by repeatedly blowing up the locus where
the divisorial index is maximal.
It follows from Proposition~\ref{prop-di-blow-up} that each center is smooth and
has normal crossings with the given divisor.
It also follows that the procedure stops after at most~$d$ steps.
\end{proof}

\begin{remark}
Assume that $V \subset X$ is an open substack that has flat inertia relative to~$S$.
Then the divisorial index vanishes at $V$,
so $\Pi$ is an isomorphism over~$V$.
\end{remark}

\begin{remark}
In the particular case when $S$ is an algebraic space and $V \subset X$ is an open dense substack
that is an algebraic space,
then $\Pi$ is an isomorphism over~$V$ by the previous remark.
Moreover, the stacky locus of $X_n$ is contained in $D_n$,
and the geometric stabilizers of $X_n$ are diagonalizable by Proposition~\ref{prop-divisorial}.
In particular, the divisorialification algorithm is also an \emph{abelianization} algorithm in this case. 
\end{remark}

\end{section}


\begin{section}{Functorial destackification}\label{S:destackification}

We formulate a relative version of the destackification theorem.
The version stated in the introduction as Theorem~\ref{TM:destackification} is obtained
by letting $T = S$ be an algebraic space.

\begin{theorem}[Destackification]\label{T:destackification}
Let $S$ be a quasi-compact algebraic stack,
and let $(X, D)$ be a standard pair
(see Definition~\ref{def-standard-pair}) over~$S$.
Furthermore, assume that $X \to T$ is a morphism of algebraic stacks
over~$S$ such that the relative inertia $I_{X/T}$ is finite with tame fibers.
Then there exists a smooth stacky blow-up sequence
\[
\Pi\colon (Y, E) = (X_n, D_n) \to \cdots \to (X_0, D_0) = (X, D)
\]
such that the coarse space $(Y_{\cs/T}, E_{\cs/T})$ relative to~$T$
is a standard pair over~$S$.
Moreover, the stack $Y$ admits a rigidification $Y \to Y_{\rig/T}$ over~$T$
such that the canonical morphism $Y_{\rig/T} \to Y_{\cs/T}$
is an iterated root stack in $E_{\cs/T}$.

The construction is functorial with respect to arbitrary base change $S' \to S$ and
with respect to smooth morphisms~$X' \to X$ that are either stabilizer preserving over~$T$ or tame gerbes.
\end{theorem}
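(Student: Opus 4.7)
The plan is to combine Theorem~\ref{T:divisorialification} with the earlier destackification theorem for stacks with diagonalizable stabilizers from \cite[Theorem~1.2]{bergh_functorial-destackification}, relativized over~$T$. First I apply divisorialification to $(X,D)$ with $T$ playing the role of the base. Although Theorem~\ref{T:divisorialification} is stated over~$S$, its proof goes through for the morphism $X\to T$, since $X\to S$ is smooth, $I_{X/T}$ is finite and tame, and the essential input is the codimension-of-stackiness of the classifying map $\tau_{D/T}\colon X\to \B\Gm^n\times T$ (as in Definition~\ref{def-divisorial-index}), which controls the blow-up locus via Theorem~\ref{thm-cs-properties}. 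This produces a smooth ordinary blow-up sequence $\Pi_1\colon (X',D')\to (X,D)$ after which $(X',D')$ is divisorial relative to~$T$. Quasi-compactness of~$S$ ensures that the relative dimension of~$X$ over~$T$ is globally bounded, so $\Pi_1$ is finite.

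By Proposition~\ref{prop-divisorial}, applied relative to~$T$, the divisorial pair $(X',D')$ admits a total rigidification $r\colon X'\to X'_{\rig/T}$ such that $(X'_{\rig/T}, D'_{\rig/T})$ is a standard pair over~$S$ whose relative stabilizers over~$T$ are diagonalizable. I then apply the diagonalizable destackification theorem of \cite{bergh_functorial-destackification} to $(X'_{\rig/T}, D'_{\rig/T})$ over~$T$, obtaining a smooth stacky blow-up sequence
\[
\Pi_2'\colon (Y'', E'') \to (X'_{\rig/T}, D'_{\rig/T})
\]
such that $(Y''_{\cs/T}, E''_{\cs/T})$ is a standard pair over~$S$ and $Y''\to Y''_{\cs/T}$ is an iterated root stack in $E''_{\cs/T}$. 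Since $r$ is a smooth, faithfully flat gerbe, the sequence $\Pi_2'$ pulls back uniquely to a smooth stacky blow-up sequence $\Pi_2\colon (Y,E)\to (X',D')$ with $Y = Y''\times_{X'_{\rig/T}} X'$. The induced morphism $Y\to Y''$ is a tame gerbe, whence $Y''=Y_{\rig/T}$ and $Y_{\cs/T}=Y''_{\cs/T}$. Concatenating $\Pi_1$ and $\Pi_2$ yields the sequence $\Pi$ required by the theorem.

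The main obstacle is the verification of functoriality under arbitrary base change $S'\to S$ and under smooth morphisms $X^\dagger\to X$ that are stabilizer preserving over~$T$ or are tame gerbes. Each of the three ingredients—divisorialification (functorial by Theorem~\ref{T:divisorialification}), formation of the total rigidification~$r$ (functorial in the required sense by the universal characterization recalled in Section~\ref{sec-rigid}), and diagonalizable destackification (functorial by the cited result)—has the relevant compatibility property separately. The point to check is that these compatibilities compose, i.e., that the formation of $X'_{\rig/T}$ commutes with the functorial operations and that the pullback of $\Pi_2'$ along $r$ intertwines them with the pullback of $\Pi_2$ along the induced map on $X^\dagger$. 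Here the key input is that both tame gerbes and smooth stabilizer-preserving morphisms over~$T$ commute with the formation of the relative coarse space and the total rigidification, so that the cartesian squares relating all the stacks in question remain cartesian after pullback.
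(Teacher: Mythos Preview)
Your overall strategy---divisorialize, rigidify, then apply the diagonalizable destackification of \cite{bergh_functorial-destackification}---is the same as the paper's. The gap is in how you relativize over~$T$. You invoke Theorem~\ref{T:divisorialification} ``for the morphism $X\to T$'' and then apply \cite{bergh_functorial-destackification} to $(X'_{\rig/T},D'_{\rig/T})$ ``over~$T$'', but neither step is available as stated. Theorem~\ref{T:divisorialification} requires $(X,D)$ to be a standard pair over its base, in particular $X$ smooth over the base; here $X\to T$ is not assumed smooth, so your map $\tau_{D/T}\colon X\to \B\Gm^n\times T$ need not be smooth and the codimension-of-stackiness invariant of Definition~\ref{def-codim-of-stack} is not defined for it. Likewise, Algorithm~E of \cite{bergh_functorial-destackification} is formulated over a scheme, not over an arbitrary algebraic stack~$T$; you cannot simply cite it over~$T$.

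The paper avoids both problems by descent rather than by working relative to~$T$ throughout. One first treats the case $S=T$ a scheme directly (your argument is correct in that case). For arbitrary~$T$ and $S$ a qcqs scheme, one presents $T$ by a smooth groupoid of affine schemes $T''\rightrightarrows T'$ and pulls back to $(X'',D'')\rightrightarrows(X',D')$. Since $T'$ and $S$ are both schemes, $I_{X'/T'}=I_{X'/S}$, so the already-established absolute case applies to $(X',D')$ over~$S$. The two pullbacks to $X''$ coincide by functoriality---the maps $X''\rightrightarrows X'$ are smooth and stabilizer preserving, being base changes of morphisms of schemes---so the blow-up sequence descends to~$X$. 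A further descent handles a general quasi-compact stack~$S$. The paper does remark that one \emph{could} carry out your direct approach by reworking the invariants of \cite{bergh_functorial-destackification} to use relative rather than absolute stabilizers, but that is a genuine reworking, not a citation.
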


\begin{proof}
Assume first that $S = T$ is a scheme.
First we apply the construction in Theorem~\ref{T:divisorialification} and obtain
an ordinary blow-up sequence $(X',D')\to (X,D)$ with divisorial source.
By Proposition~\ref{prop-divisorial} the rigidification
$(X'_\rig,D'_\rig)$ is divisorial with diagonalizable stabilizers.
Hence we may apply Algorithm~E from \cite{bergh_functorial-destackification} to
$(X'_\rig,D'_\rig)$
to achieve destackification, i.e., a stacky blow-up sequence with the desired properties.

Next we assume that $S$ is a quasi-compact and quasi-separated scheme and let $T$ be arbitrary.
Without loss of generality we can replace $T$ by the relative coarse space $X_{\cs/T}$.
In particular, we may assume that $X \to T$ is separated and surjective,
and therefore $T$ is quasi-compact and quasi-separated.
Present $T$ by a smooth groupoid of affine schemes $T'' \rightrightarrows T'$.
By pull-back along $X \to T$ this induces a pair of morphisms $(X'', D'') \rightrightarrows (X', D')$
over~$X$.
Next we apply destackification to $(X', D')$ and obtain a stacky blow-up sequence starting at $X'$.
Both morphisms $X'' \rightrightarrows X'$ are smooth and stabilizer preserving,
being pull-backs from smooth morphisms of algebraic spaces.
It follows from functoriality that the pull-backs of the blow-up sequence along the two morphisms
$X'' \rightrightarrows X'$ coincide.
By the argument given in the last two paragraphs of the proof of Theorem~3.5
in \cite{bergh_factorization},
the blow-up sequence descends to a blow-up sequence on~$X'$ with the desired properties.

Next, we let $S$ be an arbitrary quasi-compact algebraic stack.
Choose a presentation $S'' \rightrightarrows S'$ with $S'$ a quasi-compact
and quasi-separated scheme,
and let $(X'', D'') \rightrightarrows (X', D')$ be the induced pair of morphisms obtained
by pulling back $(X, D)$ along $S'' \rightrightarrows S'$.
Apply destackification to $(X', D')$.
By the same argument as in the previous paragraph,
we are done once we have checked that the two pull-backs of the blow-up sequence
to $(X'', D'')$ coincide.
But this can be checked locally.
Hence it follows from functoriality since we can cover $S''$ by morphisms
$U_\alpha'' \to S''$ from quasi-compact and quasi-separated schemes such that
each composition $U_\alpha'' \to S'$ is surjective.
\end{proof}

\begin{remark}
Harper uses the same idea in \cite{harper_factorization} to deduce the relative
case from the absolute case,
but we avoid the descent argument using higher stacks by noting that we
only need to descend the centers of each blow-up at each step.
We could also have obtained the same result by consistently working with relative stabilizers
instead of absolute stabilizers for the invariants used in \cite{bergh_functorial-destackification}.
\end{remark}

\begin{remark}
The assumption in Theorem~\ref{T:destackification} that $S$ be quasi-compact
is only needed to ensure that the resulting blow-up sequence becomes finite.
If we remove this assumption, we may get an infinite blow-up sequence,
but this is a minor problem as it is still locally finite over~$S$.
We can also ensure that the blow-up sequence becomes finite by other means.
For instance, it suffices to require that $X$ has bounded dimension over $S$
and that the degrees of the fibers of $I_{X/T}$ are globally bounded.
Indeed, this ensures that there are only finitely many possible \emph{toric types}
(see \cite[Definition~6.2]{bergh_functorial-destackification}) occurring in the destackification process,
and a simple analysis of Algorithm~E in \cite{bergh_functorial-destackification} reveals
that this implies the desired finiteness.
\end{remark}

\end{section}


\begin{section}{Stabilizers of positive dimension}\label{S:infinite-stabilizers}
For the proof of Theorem~\ref{TM:unipotent-stabs}, we will need a variant
of Luna's \'etale slice theorem. A more general version, where $G$ need not be finite, is the
main theorem of~\cite{alper-hall-rydh_luna-stacks}. Since the proof is much
simpler when $G$ is finite, we include it here. When $X$ has finite
tame abelian inertia, the result
is~\cite[Proposition~5.3]{bergh_functorial-destackification}. When $X$ has finite
tame inertia, the result is Theorem~\ref{T:Luna-slice-finite}.

\begin{theorem}\label{T:Luna-slice}
Let $X$ be a smooth stack over a field $k$. Let $x\colon \Spec k\to X$ be a
closed point and let $G\subseteq \stab(x)$ be a finite linearly
reductive group scheme over $k$. Then there exists a commutative
diagram
\[
\xymatrix{%
([\A{m}_k/G],0) & (W,w)\ar[l]_-q\ar[r]^-{p} & (X,x)
}
\]
where $G$ acts linearly on $\A{m}_k$, $q$ is \'etale and stabilizer preserving, $p$ is flat,
$p^{-1}(\mathcal{G}_x)=\mathcal{G}_w$ and
the induced map $G=\stab(w)\to \stab(x)$ is the given inclusion. It can further
be arranged so that:
\begin{enumerate}
\item\label{TI:luna:smooth}
  if $\stab(x)/G$ is smooth (e.g., $\kar k=0$), then $p$ is smooth;
\item\label{TI:luna:rep}
  if $I_X\to X$ is separated, then $p$ is representable; and
\item\label{TI:luna:snc}
  if $D=\sum_i D^i$ is a divisor on $X$ with simple normal crossings such that
  $x\in D^i$ for all $i$, then $E^i=\{x_i=0\}\subset \A{m}_k$ is $G$-invariant,
  $q^{-1}(E^i)=p^{-1}(D^i)$ and the $G$-action on $x_i$ coincides with the
  $G$-action on $x^*\sO(-D^i)$;
\end{enumerate}
\end{theorem}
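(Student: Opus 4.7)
My plan is to adapt the proof of Theorem~\ref{T:Luna-slice-finite}, replacing the role of the full (finite tame) stabilizer by that of the distinguished subgroup $G$, which is the only piece we now assume to be finite linearly reductive. Write $\tilde x\colon \B G \to X$ for the representable morphism classifying the inclusion $G \hookrightarrow \stab(x)$. The argument has two essential stages: producing a $G$-equivariant smooth affine chart at $\tilde x$, and then linearizing as in the classical Luna slice construction.

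\emph{Step 1 (equivariant chart).} I would first produce an affine scheme $\Spec A$ with $G$-action, a smooth $G$-equivariant morphism $f\colon \Spec A \to X$, and a $G$-fixed $k$-point $a$ with $f(a) = x$ inducing the given inclusion $G \hookrightarrow \stab(x)$ on stabilizers. This is the finite-stabilizer case of the local structure theorem of \cite{alper-hall-rydh_luna-stacks}; it also admits a direct proof, since $\tilde x^*L_{X/k}$ is perfect of tor-amplitude $[0,1]$ and has vanishing higher cohomology on $\B G$ (by linear reductivity of $G$), so one can build a formal $G$-equivariant lift of $\tilde x$ by unobstructed deformation theory and algebraize via Artin approximation. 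After restricting to a $G$-invariant affine neighborhood of $a$, the map $f$ is smooth throughout.

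\emph{Step 2 (linearization and refinements).} Let $V := T_a\Spec A$, a $G$-representation of dimension $m = \dim X$. Linear reductivity of $G$ yields a $G$-equivariant section of the quotient $\mathfrak{m}_a \twoheadrightarrow V^\vee$, hence a $G$-equivariant morphism $g\colon \Spec A \to \A{m}_k$ étale at $a$; shrinking $\Spec A$ $G$-equivariantly, we may assume $g$ is étale on all of $\Spec A$. Set $W := [\Spec A/G]$, $p := [f]$, and $q := [g]$; then $q$ is étale and stabilizer preserving (the stabilizers at $a$ and at $0$ both equal $G$), $p$ is flat (since $f$ is smooth), and $p^{-1}(\mathcal{G}_x) = \mathcal{G}_w$ follows from the matching of stabilizers. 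For~\itemref{TI:luna:smooth}: smoothness of $\stab(x)/G$ makes $\B G \to \mathcal{G}_x$ smooth, whence $p$ is smooth. For~\itemref{TI:luna:rep}: separatedness of $I_X \to X$ ensures that the relative inertia $I_{W/X}$, which is trivial at $w$ by construction, remains trivial on a $G$-invariant open neighborhood of $w$, making $p$ representable after further shrinking. For~\itemref{TI:luna:snc}: I would modify the splitting $V^\vee \hookrightarrow \mathfrak{m}_a$ compatibly with the divisor, sending each one-dimensional $G$-invariant conormal line $\tilde x^*\sN_{D^i/X} \subseteq V^\vee$ into the principal ideal cutting out $f^{-1}(D^i)$; the resulting coordinate $x_i$ is then a $G$-semi-invariant carrying the required character, and $\{x_i = 0\}$ maps to $p^{-1}(D^i)$, exactly as in the proof of Theorem~\ref{T:Luna-slice-finite}.

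\emph{Main obstacle.} The crux is Step 1. Theorem~\ref{T:Luna-slice-finite} exploited finite tameness of the full stabilizer to obtain an étale-local presentation $X \cong [U/\stab(x)]$, which is not available here since $\stab(x)$ may be positive-dimensional or wild. The substitute is equivariant formal deformation over $\B G$ combined with Artin approximation — the finite-$G$ case of \cite{alper-hall-rydh_luna-stacks} — which is technically the heart of the argument.
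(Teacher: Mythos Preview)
Your approach coincides with the paper's at its core: deform $\tilde x\colon \B G \to X$ over formal thickenings (obstructions vanish since $G$ is linearly reductive and $L_{X/k}$ has tor-amplitude $[0,1]$), then algebraize via Artin approximation. The paper does this with source the formal completion of $[\A{m}_k/G]$ at the origin, producing $q$ and $p$ simultaneously; you instead split into ``produce a chart $\Spec A$'' and then ``linearize'', which is a harmless reorganization. Your handling of~\itemref{TI:luna:snc} by choosing adapted coordinates after the fact is a legitimate alternative to the paper's device of deforming over $[\A{r}_k/\Gm^r]$.

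There is one real slip. You assert that $f\colon \Spec A\to X$ can be taken \emph{smooth}, but this fails whenever $\stab(x)/G$ is not smooth. Indeed, the requirement $p^{-1}(\mathcal{G}_x)=\mathcal{G}_w$ forces the fiber of $p$ over the $k$-point $x$ to be $\stab(x)/G$, so $p$ is smooth exactly under hypothesis~\itemref{TI:luna:smooth}; and since $\Spec A \to [\Spec A/G]$ is fppf surjective, smoothness of $f$ would descend to smoothness of $p$ (the fibers of $p$ are stack quotients by $G$ of the smooth fibers of $f$, hence smooth). What the construction actually yields is that $p$ is \emph{flat}: one arranges at first order that $f_1^{-1}(X_0)=Y_0$, and then invokes the local criterion of flatness, exactly as in the paper. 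Replace ``smooth'' by ``flat'' in Step~1 and your argument goes through; your deductions of flatness of $p$, and of~\itemref{TI:luna:smooth} and~\itemref{TI:luna:rep}, then match the paper's.
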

\begin{proof}
Let $\im\subseteq \sO_X$ denote the maximal ideal of $x\in |X|$. The group
$G$ acts on the normal space $N=\mathbb{V}(x^*\im/\im^2)=\A{m}_k=\Spec B$
where $B=\Sym(x^*\im/\im^2)$. Let $N_n=\Spec B/\im_B^{n+1}$ be the
$n$th infinitesimal neighborhood of the origin and let $X_n$ be the $n$th
infinitesimal neighborhood of $x$ in $X$, that is, the closed substack
defined by $\im^{n+1}$. Let $Y=[N / G]$ and $Y_n=[N_n / G]$.
The inclusion $G\subseteq \stab(x)$ induces a flat
representable morphism $f_0\colon \B G\to \B \stab(x)$, that is, a morphism
$f_0\colon Y_0\to X_0$. We will first extend $f_0$
to compatible flat maps $f_n\colon Y_n\to X_n$, then to a map
$\widehat{f}\colon \widehat{Y}\to X$ and finally use Artin approximation to
obtain a diagram $Y \leftarrow W\rightarrow X$ as in the statement.

We begin by extending $f_0$ to a flat morphism $f_1\colon Y_1\to X_1$.
The morphism $f_0\colon Y_0\to X_0$
deforms to a flat representable morphism $f'_1\colon Y'_1\to X_1$. Indeed, by
\cite[Theorem~1.4]{olsson_def-theory-of-stacks} the
obstruction to a flat deformation lies in $\Ext^2_{\B
  G}(L_{f_0},f_0^*\im/\im^2)$, which vanishes since $L_{f_0}$ has tor-amplitude in
$[-1,0]$ and $\B G$ is cohomologically
affine.  We note that $Y_0\inj
Y'_1$ is a trivial deformation. Indeed, by \cite[Theorem~1.1]{olsson_def-theory-of-stacks},
we have $\Exal_k(\B G,f_0^*\im/\im^2)=\Ext^1_{\B G}(L_{\B G/k},f_0^*\im/\im^2)$,
which vanishes since
$L_{\B G/k}$ has tor-amplitude in
$[0,1]$.
Thus, $Y_1\cong Y'_1$.

We proceed by extending $f_1\colon Y_1\to X_1$ to
compatible maps $f_n\colon Y_n\to X_n$ for every $n$.
By \cite[Theorem~1.5]{olsson_def-theory-of-stacks}, the
obstruction to lifting $f_{n-1}\colon Y_{n-1}\to X$ to $f_n\colon Y_n\to X$
lies in $\Ext^1_{\B
  G}(f_0^*L_{X/k},f_0^*\im^n/\im^{n+1})$, which vanishes since $L_{X/k}$ has
tor-amplitude in $[0,1]$. Since
$f_1^{-1}(X_0)=Y_0$, by the construction of $f_1$, it follows that
$f_n^{-1}(X_0)=Y_0$ and that $f_n$ factors through $X_n$. It follows that
$f_n\colon Y_n\to X_n$ is flat by the local criterion
of flatness~\cite[0.10.2.1]{egaIII}.

Next, we extend the $f_n$'s to a map $\widehat{f}\colon \widehat{Y}\to X$.
Let $\widehat{Y}=[\widehat{N} / G]=\varinjlim_n Y_n$ be
the completion, that is, $\widehat{N}=\varinjlim_n N_n=\Spec \widehat{B}$ where
$\widehat{B}=\varprojlim_n B/\im_B^{n+1}$. We also have the descriptions
$\widehat{Y}=Y\times_{Y_\cs} \widehat{Y_\cs}$ and
$\widehat{N}=N\times_{Y_\cs} \widehat{Y_\cs}$. Indeed, $Y_\cs=\Spec B^G$ and
since $B$ is a finite $B^G$-algebra,
we have $\widehat{B}=B\otimes_{B^G} \widehat{B^G}$.
Since
\[
X(\widehat{N}) = \varprojlim_n X(N_n)\quad\text{and}\quad
X(G\times \widehat{N}) = \varprojlim_n X(G\times N_n)
\]
it follows by descent that the system
$\{f_n\}$ extends to an essentially unique morphism $\widehat{f}\colon
\widehat{Y}\to X$.

The final step is to construct the diagram $Y \leftarrow W\rightarrow X$ using
Artin approximation. Thus consider the contravariant functor
\[
F\colon (T\to Y_\cs)\longmapsto \pi_0\bigl(\Mor_k(Y\times_{Y_\cs} T,X)\bigr)
\]
from affine schemes over $Y_\cs$ to sets. The functor $F$ preserves filtered colimits
since $X/k$ is locally of finite presentation.
Artin
approximation~\cite[Theorem~1.12]{artin_alg_approximation}
applied to $F$ and $\widehat{f}\in F(\widehat{Y}_\cs)$ thus gives an
\'etale neighborhood $q_\cs\colon (T,t)\to (Y_\cs,0)$ and a morphism
$p\colon W:=Y\times_{Y_\cs} T\to X$ such that there exists a $2$-isomorphism
$p|_{W_1} \cong f_1\circ q_1$.
Here $q\colon W\to Y$
is the induced stabilizer preserving \'etale morphism
and $q_n\colon W_n\to Y_n$ is the induced isomorphism on $n$th infinitesimal
neighborhoods. Note that $p|_{W_n}$ is not necessarily $2$-isomorphic to
$f_n\circ q_n$ for $n>1$.
Nevertheless, the $2$-isomorphism $p|_{W_1} \cong f_1\circ q_1$ and the
identification of conormal bundles $\sN_{W_0/W}=f_0^*\im/\im^2$ is enough
to deduce that $p^{-1}(X_0)=W_0$ and that $p$ is flat at $W_0$ by the local
criterion of
flatness~\cite[0.10.2.1]{egaIII}.

\medskip
We now consider \itemref{TI:luna:smooth} and \itemref{TI:luna:rep}.
If $\stab(x)/G$ is smooth, then $f_0$ is smooth so $p$ is smooth at $w$.
If $I_X$ is separated, then $I_{W/X}\subset I_W$ is a closed subgroup, hence
finite. Since $I_{W/X}\to W$ is trivial at $w$, it is trivial in a neighborhood of $w$. Thus, \itemref{TI:luna:smooth} and \itemref{TI:luna:rep} hold after replacing $W$ with an open neighborhood of
$w$.

For \itemref{TI:luna:snc}, we get a splitting
\[
\im/\im^2=\sN_{D_1/X,x}\oplus \dots \oplus \sN_{D_r/X,x} \oplus \sE'.
\]
By the construction above, we obtain that $p^{-1}(D^i)=q^{-1}(E^i)$ after
restricting to $W_1$. To get an equality over $W$, we modify the step where
we extend $f_{n-1}$ to $f_n$. Instead of deforming over $k$, we deform over
the stack $[\A{r}_k/\Gm^r]$ where the structure morphisms $Y\to [\A{r}_k/\Gm^r]$
and $X\to [\A{r}_k/\Gm^r]$ correspond to the $E^i$'s and the $D^i$'s. That
$D$ and $E$ are divisors with normal crossings is equivalent to the smoothness
of these structure morphisms. Since \cite[Theorem~1.5]{olsson_def-theory-of-stacks}
requires the base to be a scheme, we apply a different argument. The
fundamental triangle of the
composition $Y_{n-1}\to X\times \B G\to [\A{r}_k/\Gm^r]\times \B G$ together
with \cite[Theorem~1.1]{olsson_def-theory-of-stacks} gives the
exact sequence
\begin{gather*}
\Exal_{X\times \B G}(Y_{n-1},f_0^*\im^n/\im^{n+1})
\to \Exal_{[\A{r}_k/\Gm^r]\times \B G}(Y_{n-1},f_0^*\im^n/\im^{n+1})
\to\\
\to \Ext^1_{\B G}(f_0^*L_{X/k},f_0^*\im^n/\im^{n+1})=0.
\end{gather*}
We can thus promote the deformation $Y_{n-1}\inj Y_n$ over $[\A{r}_k/\Gm^r]$
to a deformation over $X$. In the final Artin approximation step, we replace
the target of the functor $F$ with maps over $[\A{r}_k/\Gm^r]$ instead of
maps over $k$.
\end{proof}

We can now prove the generalization
of~\cite[Theorem~4.1]{reichstein-youssin_ess-dim-resolution-thm}: if $(X,D)$ is a
standard pair such that the stacky locus is contained in $D$, then the kernel of
the representation $\rho_D\colon I_X\to \Gm^n\times X$ defined in \eqref{eq-divisorial-rep}
has unipotent fibers.
\begin{proof}[Proof of Theorem~\ref{TM:unipotent-stabs}]
The line bundles $\sO(D^i)$ induce one-dimensional representations of
$\stab(x)$. Let $\rho\colon \stab(x)\to \Gm^n$ be the sum of these
representations
and let $U=\ker \rho$ be its kernel. We will prove that $U$ has no non-trivial
multiplicative
subgroups. It is enough to prove that $U$ does not have a subgroup of the form
$\Gmu_r$ for any $r>1$.
It then
follows that $U$ is unipotent~\cite[Expos\'e~17, Th\'eor\`eme~4.6.1]{sga3}. Thus $\stab(x)$
is the extension of the diagonalizable group $\Delta=\image(\rho)$ by the unipotent group
$U$. Such extensions are always split since $\overline{K}$ is algebraically
closed~\cite[Expos\'e~17, Th\'eor\`eme~5.1.1]{sga3}.

Let $\Gmu_r\subseteq U$ be a finite multiplicative subgroup. By
Theorem~\ref{T:Luna-slice}, we have a flat representable morphism $p\colon
W\to X$, an \'etale stabilizer preserving morphism $q\colon W\to [\A{m}_k /
  \Gmu_r]$, and a divisor $E\subset \A{m}_k$ with simple normal crossings such
that $p^{-1}(D)=q^{-1}(E)$. Moreover, $E$ is a sum of coordinate hyperplanes
$E^i=\{x_i=0\}$ and the action of $\Gmu_r$ on $x_i$ coincides 
with the action of $\Gmu_r$ on $x^*\sO(-D_i)$ which is trivial by construction.
 Thus, the
$\Gmu_r$-fixed locus of $\A{m}_k$ is not contained in $E$. Since
$X\smallsetminus D$ is an algebraic space and $p$ is representable and
$q$ is stabilizer preserving, the action of $\Gmu_r$ on
$\A{m}_k\smallsetminus E$ is free. It follows that $r=1$.
\end{proof}

In Theorem~\ref{TM:unipotent-stabs}, we do not require that $X$ has
finite or even affine stabilizers.
Similarly, we do not impose any separatedness assumptions on $X$,
such as finite inertia, except that we require that $X$ has separated diagonal.

We note some simple consequences of Theorem~\ref{TM:unipotent-stabs}.
\begin{enumerate}
\item If $X'\to X$ denotes the $\Gm^n$-torsor corresponding to
  $\sO(D_1),\dots,\sO(D_n)$, then $X'$ has only unipotent stabilizer groups.
\item If $X$ has connected stabilizers, then the stabilizers are solvable.
\item Suppose $X$ has finite tame stabilizers, e.g., $k$ has characteristic
  zero and $X$ is Deligne--Mumford.
  Then $U=0$ so $\stab(x)=\Delta$ is diagonalizable, $X'$ is an algebraic space
  and $(X,D)$ is divisorial.
\item Suppose $k$ has characteristic $p>0$ and $X$ has only finite stabilizers.
  Then $U$ is a unipotent $p$-group: a successive extension of $\Galpha_p$'s and
  $\ZZ/p\ZZ$'s. If in addition $X$ is Deligne--Mumford, then $U$ is an \etale{}
  $p$-group. Note that $\stab(x)$ is not necessarily abelian.
\end{enumerate}

The following theorem is a generalization
of~\cite[Theorem~3.2]{reichstein-youssin_ess-dim-resolution-thm}. It gives a
different proof of Theorem~\ref{TM:divisorialification} (Divisorialification)
when $S$ is the spectrum of a field of characteristic zero,
using resolution of singularities.

\begin{theorem}\label{T:partial-torification-char0}
Let $k$ be a field of characteristic zero. Let $X$ be a smooth stack of finite
type over $k$ with separated diagonal.
Let $V\subseteq X$ be an open dense substack that is an algebraic space.
Then there exists a $V$-admissible smooth ordinary blow-up sequence
\[
\Pi\colon (Y,E)=(X_n,D_n)\to \cdots \to (X_1,D_1)\to (X_0,D_0) = (X,\emptyset)
\]
such that $Y$ has geometric stabilizers of the form
$U\rtimes \Delta$ with $U$ unipotent and $\Delta$ diagonalizable. More
precisely, the non-divisorial stabilizer $\ker(\rho_E)$ is unipotent. In
particular, if $X$ is Deligne--Mumford, then $U=0$,
$\Delta$ is finite diagonalizable and $(Y,E)$ is divisorial.
The sequence is functorial with respect to smooth stabilizer preserving
morphisms and arbitrary field extensions.
\end{theorem}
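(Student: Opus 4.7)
The proof strategy is to reduce the abelianization question to embedded resolution of singularities: by Theorem~\ref{TM:unipotent-stabs}, once the non-schematic locus of $X$ has been turned into a simple normal crossings divisor $E$, every geometric stabilizer automatically has the form $U\rtimes\Delta$ with $U=\ker(\rho_E)$ unipotent and $\Delta$ diagonalizable. Since we are in characteristic zero, Hironaka-style embedded resolution is available, so the work consists of arranging the geometry and then invoking Theorem~\ref{TM:unipotent-stabs}.

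Concretely, I would let $Z\subseteq X$ be the closed substack $X\smallsetminus V$ with its reduced structure. This is the ``stacky locus'' in the sense that $V$ is an algebraic space. The plan is to apply a functorial embedded resolution algorithm (e.g.\ Bierstone--Milman or W\l odarczyk) to the pair $(X,Z)$. Such an algorithm produces a sequence of blow-ups in smooth centers, each contained in (the strict transform of) $Z$ together with the previously introduced exceptional divisors, and terminates in a stack $Y\to X$ whose total transform $E$ of $Z$ is a simple normal crossings divisor. Because every center is disjoint from $V$, the sequence is $V$-admissible, so $V\subseteq Y$ continues to be an open algebraic space and $Y\smallsetminus E=V$.

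The fact that the resolution algorithm we cite is stated for schemes is not an obstacle, because the standard algorithms are functorial with respect to smooth morphisms and arbitrary (separable) field extensions. This functoriality lets the algorithm descend from any smooth presentation of $X$ to $X$ itself, and also propagates to make the resulting blow-up sequence functorial with respect to smooth stabilizer preserving morphisms $X'\to X$ and field extensions, as required in the statement. This is the main technical point that needs to be checked carefully; with positive-dimensional stabilizers one cannot reduce to a coarse space, so the descent has to happen for the full stack, relying only on the functoriality of the resolution.

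Once $(Y,E)$ is obtained, we apply Theorem~\ref{TM:unipotent-stabs} to the standard pair $(Y,E)$: at every geometric point $y\in |E|$ the stabilizer is $U\rtimes\Delta$ with $U$ unipotent, $\Delta$ diagonalizable, and the induced map $\Delta\to\Gm^n$ faithful on the local components of $E$ through $y$; outside $|E|$ the point lies in $V$ and has trivial stabilizer. In particular $\ker(\rho_E)=U$ is unipotent. If $X$ is Deligne--Mumford, then $Y$ is as well, so the stabilizers are finite; in characteristic zero the only finite unipotent group is trivial, giving $U=0$, $\Delta$ finite diagonalizable, and $(Y,E)$ divisorial. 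This completes the argument.
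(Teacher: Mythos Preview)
Your proposal is correct and follows essentially the same approach as the paper: apply functorial embedded resolution of singularities to a closed substack containing all the stackiness, then invoke Theorem~\ref{TM:unipotent-stabs}. The only difference is the choice of $Z$: the paper takes $Z=\overline{W}$ where $W$ is the locus of non-trivial stabilizers, while you take $Z=X\smallsetminus V$; since $V$ is an algebraic space we have $\overline{W}\subseteq X\smallsetminus V$, and either choice works equally well for the argument and the stated functoriality.
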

\begin{proof}
Let $W\subseteq |X|$ be the locus of points with non-trivial stabilizer. Let
$Z=\overline{W}$. Then $V\cap Z=\emptyset$ and the formation of $Z$ commutes
with smooth stabilizer preserving morphisms.

We use functorial embedded resolution of singularities to obtain a resolution
$\pi\colon Y\to X$ of $Z$. That is, $\pi$ is a smooth blow-up sequence and
$\pi^{-1}(Z)=E=\sum_i E^i$ is a divisor with simple
normal crossings. That the stabilizers have the required form follows
from Theorem~\ref{TM:unipotent-stabs}.
\end{proof}

\begin{remark}\label{R:NpS}
Let $k$ be a field of characteristic $p>0$ and let $X$ be a stack of finite
type over $k$ with finite stabilizers. We say that $X$ satisfies (NpS) if the
stabilizers of $X$ are of the form $U\rtimes H$ where $U$ is unipotent and
$H$ is linearly reductive. If $X$ is Deligne--Mumford, then equivalently
the stabilizers have normal $p$-Sylow subgroups. Theorem~\ref{TM:unipotent-stabs}
implies that if $X$ is smooth and the stackiness is contained in a divisor
with simple normal crossings, then $X$ satisfies (NpS).

Abbes and Saito prove that if $G$ is a finite group and $X=[V/G]$ has a dense
open subscheme, then there exists a normalized blow-up $Y\to X$ such that $Y$
satisfies (NpS)~\cite[Prop.~2.22]{abbes-saito_ramification-and-cleanliness}.
It would be interesting to achieve (NpS) by a smooth blow-up sequence if $X$ is
smooth. This could perhaps be a first step towards a new proof of the existence
of $p$-alterations.
\end{remark}

\end{section}


\appendix

\begin{section}{Sheaves on tame gerbes}\label{A:gerbes}
In this appendix we prove that every quasi-coherent sheaf and every complex of
quasi-coherent sheaves on a tame gerbe canonically splits as a direct sum of a
trivial part and a purely non-trivial part.

We say that $\pi\colon X\to S$ is a \emph{finite tame gerbe} if
$\pi$ is an fppf gerbe and $I_{X/S}\to X$ is finite with linearly reductive
fibers. Then $\pi$ is also
a tame coarse space: $\pi_*$ is exact and $\pi_*\sO_X=\sO_S$. In addition,
$\pi_*$ has cohomological dimension zero: $R^i\pi_*(\sF)=0$ for all $\sF\in
\QCoh(X)$ and all $i>0$. Indeed, this can be checked locally on $S$ and then
$\Dqc^+(X)=\D^+(\QCoh(X))$ because $X$ has affine diagonal~\cite[Theorem~3.8]{lurie_tannakian}.

For any gerbe $\pi\colon X\to S$ the functor $\pi^*\colon \QCoh(S)\to \QCoh(X)$
is exact and fully faithful, that is, the unit map $\sF\to
\pi_*\pi^*\sF$ is always an isomorphism. In addition, the counit map
$\pi^*\pi_*\sF\to \sF$ is always injective. Both these claims are local on $S$
so we can assume that the gerbe is neutral: $X=\B G$ for a group scheme $G\to
S$. We can then identify the category $\QCoh(X)$ with the category of
$G$-equivariant sheaves $\QCoh^G(S)$, the functor $\pi_*$ with taking
$G$-invariant subsheaves, and the functor $\pi^*$ with endowing a sheaf with
the trivial action.

\begin{definition}
Let $\pi\colon X \to S$ be a gerbe and
let $\sF\in \QCoh(X)$ be a quasi-coherent sheaf. We say that $\sF$ has
\emph{trivial action} if the counit map $\pi^*\pi_*\sF\to \sF$ is an
isomorphism. We say that $\sF$ has \emph{purely non-trivial action} if
$\pi_*\sF=0$.
We let $\QCoh_\triv(X)$ (resp.\ $\QCoh_\nt(X)$) denote the full subcategories
of $\QCoh(X)$ consisting of objects with trivial (resp.\ purely non-trivial)
action.
We also let $\sF_\triv:=\pi^*\pi_*\sF$ and $\sF_\nt:=\sF/\sF_\triv$ so that
we have an exact sequence
\[
0\to \sF_\triv \to \sF \to \sF_\nt\to 0.
\]
\end{definition}

Note that $\sF_\triv$ has trivial action but $\sF_\nt$ need not have purely
non-trivial action.

\begin{theorem}\label{T:tame-splitting:QCoh}
Let $\pi\colon X\to S$ be a finite tame gerbe.
\begin{enumerate}
\item\label{TI:tame-gerbe:Serre}
  The subcategories $\QCoh_\triv(X)$ and $\QCoh_\nt(X)$ are Serre
  subcategories.
\item\label{TI:tame-gerbe:triv}
  The functor $\pi^*$ is fully faithful with essential image $\QCoh_\triv(X)$
  and quasi-inverse $\pi_*$.
\item\label{TI:tame-gerbe:exact}
  The assignments $\sF\mapsto \sF_\triv$ and $\sF\mapsto \sF_\nt$
  give exact functors
  \begin{align*}
  \triv\colon &\QCoh(X)\to \QCoh_\triv(X),\text{ and}\\
  \nt\colon &\QCoh(X)\to \QCoh_\nt(X).
  \end{align*}
  These functors
  commute with arbitrary base change $S'\to S$.
\item\label{TI:tame-gerbe:equiv}
  The exact functor $(\triv,\nt)\colon \QCoh(X) \to
  \QCoh_\triv(X)\times \QCoh_\nt(X)$ is an equivalence of abelian categories
  with quasi-inverse given by $(\sF,\sG)\mapsto \sF\oplus \sG$.
\end{enumerate}
In particular, every quasi-coherent sheaf $\sF$ splits canonically as
$\sF_\triv\oplus \sF_\nt$.
\end{theorem}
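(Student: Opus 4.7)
The plan is to address the claims roughly in the order (ii), (iii), (i), (iv), since the real content lies in producing the canonical splitting demanded by (iv). Part (ii) is essentially a restatement of what is recalled just before the theorem: for any gerbe, the unit $\sF \to \pi_*\pi^*\sF$ is an isomorphism (so $\pi^*$ is fully faithful), and the essential image consists exactly of those $\sF$ for which the counit $\pi^*\pi_*\sF \to \sF$ is an isomorphism, which is the definition of $\QCoh_\triv(X)$. The quasi-inverse on this subcategory is $\pi_*$.

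For (iii), the functor $\triv = \pi^*\pi_*$ is exact, since $\pi^*$ is exact for any gerbe and $\pi_*$ is exact by tameness. Applying $\pi_*$ to the defining sequence $0 \to \sF_\triv \to \sF \to \sF_\nt \to 0$ and using the unit isomorphism $\pi_*\pi^*\pi_*\sF \cong \pi_*\sF$, I conclude $\pi_*\sF_\nt = 0$, so $\sF_\nt$ indeed lies in $\QCoh_\nt(X)$; exactness of $\nt$ then follows formally from that of $\triv$ and the snake lemma. Compatibility with base change $S' \to S$ reduces to base change compatibility of $\pi_*$, which is standard for the tame coarse space of a gerbe and was already used in the body of the paper.

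For (i), closure of $\QCoh_\nt(X)$ under subobjects, quotients, and extensions is immediate from the exactness of $\pi_*$. Closure of $\QCoh_\triv(X)$ under extensions follows from the exactness of $\pi^*\pi_*$ by a five-lemma comparison with the defining extension, using that the counit is an isomorphism on the outer terms. Closure of $\QCoh_\triv(X)$ under subobjects and quotients is a local question on $X$, which I would verify fppf-locally on $S$ where $X = \B G$ for a finite linearly reductive $G$: any $G$-equivariant subsheaf, or quotient, of a sheaf with trivial $G$-action inherits a trivial $G$-action.

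The main obstacle is (iv), specifically the canonical splitting $\sF \cong \sF_\triv \oplus \sF_\nt$; although $\sF_\triv \hookrightarrow \sF$ is formal, a complement is not. I would work fppf-locally on $S$, where $X = \B G$ with $G$ finite linearly reductive, and invoke complete reducibility: every such $\sF$ decomposes canonically into isotypic components, so collecting the trivial (resp.\ non-trivial) isotypes produces a canonical $G$-equivariant decomposition $\sF = \sF_\triv \oplus \sF'_\nt$ with $(\sF'_\nt)^G = 0$. Equivalently, the Reynolds operator associated to $G$ supplies a canonical $G$-equivariant projector $\sF \to \sF_\triv$. Both constructions are functorial in $\sF$ and compatible with arbitrary base change on $S$, so by canonicity the local splittings descend under fppf descent to a global splitting $\sF = \sF_\triv \oplus \sF_\nt$. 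With this splitting in hand, the remainder of (iv) is formal: objects of $\QCoh_\triv(X)$ and $\QCoh_\nt(X)$ are orthogonal with respect to $\pi_*$, so $(\triv,\nt)$ and $\oplus$ are readily verified to be mutually quasi-inverse.
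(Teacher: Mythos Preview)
Your proof is correct; for (ii) and (iii) it coincides with the paper's argument. For (i) the paper is slightly slicker: since the counit $\pi^*\pi_*\Rightarrow\id$ is injective for any gerbe and both $\pi_*,\pi^*$ are exact, the snake lemma applied to the morphism of short exact sequences $\pi^*\pi_*(-)\to(-)$ shows directly that $\sF$ lies in $\QCoh_\triv(X)$ (resp.\ $\QCoh_\nt(X)$) if and only if both $\sF'$ and $\sF''$ do --- no localization is needed.

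The real divergence is in (iv). You build the splitting directly: fppf-locally $X=\B G$ with $G$ finite linearly reductive, the Reynolds operator furnishes a retraction of $\sF_\triv\hookrightarrow\sF$, and since any such retraction is unique (two differ by a map $\sF_\nt\to\sF_\triv$, which is zero by (i)), the local splittings descend. The paper takes a different route and proves a stronger auxiliary lemma: after localizing so that $\pi$ admits a section $s\colon S\to X$, it exhibits $\sO_X$ as a projective generator for $\QCoh_\triv(X)$ and $Q:=(s_*\sO_S)_\nt$ as a projective generator for $\QCoh_\nt(X)$, and from this deduces $\pi_*\sheafExt^i(\sF,\sG)=\pi_*\sheafExt^i(\sG,\sF)=0$ for \emph{all} $i\geq 0$ whenever $\sF$ is trivial and $\sG$ is purely non-trivial. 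Your approach is more direct for the abelian-category statement at hand; the paper's approach yields the higher Ext-vanishing for free, which it then immediately reuses to establish the orthogonal decomposition of $\Dqc(X)$ in the derived analogue that follows.
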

\begin{proof}
For \itemref{TI:tame-gerbe:Serre}, let $0\to \sF'\to \sF\to \sF''\to 0$ be an
exact sequence in $\QCoh(X)$. Since $\pi_*$ and $\pi^*$ are exact and
$\pi^*\pi_*\Rightarrow \id$ is injective, it follows that $\sF$ has trivial
action (resp.\ purely non-trivial action) if and only if $\sF'$ and $\sF''$ has
such an action.

Item \itemref{TI:tame-gerbe:triv} follows from the fully faithfulness of
$\pi^*$ and the definition of $\QCoh_\triv(X)$ and holds for any gerbe.

For \itemref{TI:tame-gerbe:exact}, we note that $\triv=\pi^*\pi_*$ is a
composition of exact functors that land in $\QCoh_\triv(X)$. That $\sF_\nt$ has
purely non-trivial action follows from the exactness of $\pi_*$ and that the
functor $\nt$ is exact follows from the exactness of $\triv$. Since
$\pi_*$ and $\pi^*$ commute with arbitrary base change, so does $\triv$ and
$\nt$.

For \itemref{TI:tame-gerbe:equiv}, it remains to prove that if $\sF\in
\QCoh_\triv(X)$ and $\sG\in \QCoh_\nt(X)$, then $\Ext^i(\sF,\sG)=0$ and
$\Ext^i(\sG,\sF)=0$ for all $i$. This follows from the following lemma.
\end{proof}

\begin{lemma}
If $\sF\in \QCoh_\triv(X)$ and $\sG\in \QCoh_\nt(X)$, then
$\pi_*\sheafExt^i(\sF,\sG)=0$ and $\pi_*\sheafExt^i(\sG,\sF)=0$ for all $i$.
In particular, the exact sequence $0\to \sF_\triv\to \sF\to \sF_\nt\to 0$ is
canonically split for every $\sF\in\QCoh(X)$.
\end{lemma}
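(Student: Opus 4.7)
The plan is to reduce to the neutral case $X = \B G \to S = \Spec R$ with $G$ a finite linearly reductive group scheme over $S$, and then exploit the fact that $\pi_*$ is exact and serves as \emph{both} left and right adjoint to $\pi^*$. Since $\pi_*$, $\sheafExt^i$, and the subcategories $\QCoh_\triv$ and $\QCoh_\nt$ are all compatible with flat base change on $S$ (using tameness for the exactness of $\pi_*$), both statements $\pi_*\sheafExt^i(\sF,\sG) = 0$ and $\pi_*\sheafExt^i(\sG,\sF) = 0$ can be checked fppf-locally on $S$, so this reduction is harmless.

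In the neutral setting I would first observe that $\pi^*$ sends projectives to projectives and injectives to injectives in $\QCoh^G(S) = \QCoh(X)$, which is immediate from the two adjunctions
\[
\Hom_{\QCoh^G}(M,\pi^*N) = \Hom_S(\pi_* M,N), \qquad \Hom_{\QCoh^G}(\pi^*N,M) = \Hom_S(N,\pi_* M),
\]
combined with the exactness of $\pi_*$. Writing $\sF = \pi^*\sF_0$, choose a free resolution $F^\bullet \to \sF_0$ and an injective resolution $\sF_0 \to J^\bullet$ on $S$; their $\pi^*$-pullbacks give respectively a projective and an injective resolution of $\sF$ in $\QCoh^G(S)$. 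For any $\sG$ with $\pi_*\sG = 0$, the adjunctions instantly yield
\[
\Hom_{\QCoh^G}(\pi^*F^j,\sG) = \Hom_S(F^j,\pi_*\sG) = 0, \qquad \Hom_{\QCoh^G}(\sG,\pi^*J^j) = \Hom_S(\pi_*\sG,J^j) = 0
\]
for every $j$, so $\Ext^i_{\QCoh^G}(\sF,\sG) = 0$ and $\Ext^i_{\QCoh^G}(\sG,\sF) = 0$ for all $i$.

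To convert this global Ext vanishing into vanishing of $\pi_*\sheafExt^i$, I would invoke the local-to-global spectral sequence $H^p(X,\sheafExt^q_X) \Rightarrow \Ext^{p+q}_X$: since $\pi_*$ is exact and $S$ is affine, the higher cohomology on $X$ vanishes, so $\Ext^i_X(\sF,\sG) = \Gamma(S,\pi_*\sheafExt^i_X(\sF,\sG))$, and running this over all affine opens shows $\pi_*\sheafExt^i_X(\sF,\sG) = 0$, and similarly in the other direction. For the canonical splitting, note that $\sF_\nt$ is genuinely purely non-trivial, since $\pi_*$ is exact and $\pi_*\sF_\triv = \pi_*\sF$. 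The obstruction to splitting $0 \to \sF_\triv \to \sF \to \sF_\nt \to 0$ lies in $\Ext^1_X(\sF_\nt,\sF_\triv)$, which vanishes Zariski-locally on $S$ by what we just proved; and any two local splittings differ by an element of $\Hom_X(\sF_\nt,\sF_\triv) = \Gamma(S,\pi_*\sheafHom_X(\sF_\nt,\sF_\triv)) = 0$, so the local splittings glue uniquely to a canonical global one. The main subtlety to watch is the degeneration of the local-to-global spectral sequence together with the preservation of projectives/injectives under $\pi^*$; both ultimately rest on the exactness of $\pi_*$, i.e., on tameness and linear reductivity.
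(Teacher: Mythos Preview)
Your argument is correct and the reduction, the spectral-sequence step, and the gluing of the splitting are all fine, but it rests on one assertion you do not justify: the identity
\[
\Hom_{\QCoh^G}(M,\pi^*N)=\Hom_S(\pi_*M,N).
\]
This is \emph{not} the standard adjunction $\pi^*\dashv\pi_*$ (that is your second displayed formula); it says that $\pi_*$ is also a \emph{left} adjoint to $\pi^*$, i.e., that coinvariants and invariants agree for linearly reductive $G$. Equivalently, it is the existence of a natural Reynolds retraction $M\to M^G$. That is of course a standard fact for finite linearly reductive group schemes, but note that the resulting unit $M\to\pi^*\pi_*M=M_\triv$ already splits $M_\triv\hookrightarrow M$ naturally. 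So you are importing the canonical splitting---the ``in particular'' clause---as an input rather than deriving it; the only new content your argument supplies is the higher $\pi_*\sheafExt^i$ vanishing. If you want the proof to be self-contained you should either establish the Reynolds operator first (say fppf-locally via the Abramovich--Olsson--Vistoli decomposition $G\cong\Delta\rtimes H$) or avoid the non-standard adjunction altogether.

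The paper takes the second route. After reducing to $S$ affine with a section $s\colon S\to X$, it produces an explicit projective generator $Q:=\operatorname{coker}(\sO_X\to s_*\sO_S)$ for $\QCoh_\nt(X)$, using the right adjoint $s^\times$ of $s_*$ to surject $Q^{\oplus I}\twoheadrightarrow\sG$. Likewise $\sO_X$ generates $\QCoh_\triv(X)$. Both $\sF$ and $\sG$ are then resolved by direct sums of these vector bundles, so computing $\pi_*\sheafExt^i$ in either direction reduces to the case $i=0$, which follows at once from the Serre-subcategory property already proved. Your approach is slicker once the bi-adjunction is granted---you resolve only the trivial object and never need to build resolutions inside $\QCoh_\nt$---while the paper's approach is more hands-on but does not invoke the Reynolds operator as a black box.
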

\begin{proof}
The question is local on $S$, so we may assume that $S$ is affine and $\pi$ has
a section $s\colon S\to X$. Note that for $i=0$, the question is simple: the
image of $\sF\to \sG$ is both trivial and purely non-trivial, hence $0$.  Now
consider the exact sequence
\[
0\to \sO_X\to s_*\sO_S\to Q\to 0.
\]
Since $s$ is faithfully flat, finite and of finite presentation, we have that
$Q$ is flat
and of finite presentation, i.e., a vector bundle, and $s^*\sO_X\to
s^*s_*\sO_X$ is split injective. Note that $Q=(s_*\sO_S)_\nt$. We will prove
that $\sO_X$ is a projective generator for $\QCoh_\triv(X)$ and that $Q$ is a
projective generator for $\QCoh_\nt(X)$.

If $\sF\in \QCoh_\triv(X)$, then we can find a surjection $\sO_S^{\oplus
  I}\surj \pi_* \sF$ and thus obtain a surjection $\sO_X^{\oplus I}\surj
\pi^*\pi_*\sF = \sF$.

Recall that since $s$ is finite, of finite presentation and flat, $s_*$ admits
a right adjoint $s^\times$. Moreover,
$s_*s^\times=\sheafHom_{\sO_X}(s_*\sO_S,-)$ and the counit map
$s_*s^\times\sG\to \sG$ is surjective since $\sO_X\to s_*\sO_S$ is fppf-locally
split injective.  If $\sG\in \QCoh_\nt(X)$, then we can find a surjection
$\sO_S^{\oplus I}\surj s^\times \sG$ and thus obtain a surjection
$(s_*\sO_S)^{\oplus I}\surj s_*s^\times \sG\surj \sG$. But $\sG$ is purely
non-trivial so every map $\sO_X\to \sG$ is zero. We thus obtain an induced
surjection $Q^{\oplus I}\surj \sG$.

To calculate $\pi_*\sheafExt^i(\sF,\sG)$, we may resolve $\sF$ by trivial
vector bundles and it is thus enough to calculate $\pi_*\sheafHom(\sO_X,\sG)$
which we have already seen is zero (case $i=0$). Similarly, to calculate
$\pi_*\sheafExt^i(\sG,\sF)$, we may resolve $\sG$ by vector bundles of the form
$Q^{\oplus I}$ and it is thus enough to calculate $\pi_*\sheafHom(Q,\sF)$ which
is zero (case $i=0$).
\end{proof}

We have the following derived analogue. Let $\Dqc(X)_\triv$
(resp.\ $\Dqc(X)_\nt$) be the full subcategory of $\Dqc(X)$ consisting of
objects $\sF$ such that $\pi^*\pi_*\sF\to \sF$ is an isomorphism
(resp.\ $\pi_*\sF=0$). Let $\sF_\triv=\pi^*\pi_*\sF$ and let $\sF_\nt$ be the
cone of $\sF_\triv\to \sF$.

\begin{theorem}\label{T:tame-splitting:Dqc}
Let $\pi\colon X\to S$ be a finite tame gerbe.
\begin{enumerate}
\item\label{TI:tame-gerbe-derived:localizing}
  The subcategories $\Dqc(X)_\triv$ and $\Dqc(X)_\nt$ are localizing
  subcategories.
\item\label{TI:tame-gerbe-derived:triv}
  The functor $\pi^*$ is fully faithful with essential image
  $\Dqc(X)_\triv$ and quasi-inverse $\pi_*$.
\item\label{TI:tame-gerbe-derived:orthogonal}
  There is an \emph{orthogonal decomposition}
  $\Dqc(X)=\langle \Dqc(X)_\triv, \Dqc(X)_\nt \rangle$.
\end{enumerate}
In particular, the triangle $\sF_\triv\to \sF\to \sF_\nt\to$ is split.
\end{theorem}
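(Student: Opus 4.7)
The plan is to promote Theorem~\ref{T:tame-splitting:QCoh} to the derived level by exploiting the $t$-exactness of both $\pi^*$ and $\pi_*$. Since $\pi$ is faithfully flat, $\pi^* = \Ld\pi^*$ is exact; and by the cohomological-dimension-zero property noted at the beginning of the appendix, $\Rd\pi_* = \pi_*$ is exact as well. Hence both functors extend to $t$-exact, coproduct-preserving endofunctors of $\Dqc$. Part~\itemref{TI:tame-gerbe-derived:triv} is then immediate: the unit $\id \to \pi_*\pi^*$ is an isomorphism of quasi-coherent sheaves by Theorem~\ref{T:tame-splitting:QCoh}\itemref{TI:tame-gerbe:triv}, hence an isomorphism in $\Dqc(S)$ by $t$-exactness, so $\pi^*$ is fully faithful with quasi-inverse $\pi_*$ and its essential image is $\Dqc(X)_\triv$ by definition.

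For part~\itemref{TI:tame-gerbe-derived:localizing}, the essential image of the triangulated, coproduct-preserving functor $\pi^*$ is automatically a localizing subcategory, giving $\Dqc(X)_\triv$. The kernel $\ker(\pi_*) = \Dqc(X)_\nt$ of the triangulated, coproduct-preserving functor $\pi_*$ is likewise a localizing subcategory.

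For part~\itemref{TI:tame-gerbe-derived:orthogonal}, the canonical triangle $\sF_\triv \to \sF \to \sF_\nt \to$ satisfies $\sF_\triv = \pi^*\pi_*\sF \in \Dqc(X)_\triv$ tautologically; applying the exact functor $\pi_*$ and invoking the unit isomorphism shows $\pi_*\sF_\nt = 0$, so $\sF_\nt \in \Dqc(X)_\nt$. Orthogonality in one direction is a direct application of adjunction: $\Rd\Hom(\pi^*A, \sG) \cong \Rd\Hom(A, \pi_*\sG) = 0$ whenever $\sG \in \Dqc(X)_\nt$. The other direction follows once one knows that on a tame gerbe $\pi^*$ is in fact \emph{biadjoint} to $\pi_*$. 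This can be checked fppf-locally on $S$: after a cover, $\pi$ takes the form $\B G \to S$ for $G$ a finite linearly reductive group scheme, the left adjoint of the trivial-action functor $\pi^*$ is the coinvariants functor, and the averaging map identifies coinvariants with invariants, i.e., with $\pi_*$. By uniqueness of adjoints this local identification globalizes, and granting it, $\Rd\Hom(\sG, \pi^*A) \cong \Rd\Hom(\pi_*\sG, A) = 0$ for $\sG \in \Dqc(X)_\nt$.

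The principal technical input is thus the biadjointness of $\pi^*$ and $\pi_*$, which is the averaging-map incarnation of tameness for a finite tame gerbe. Once this is in place, together with the elementary observation that both $\pi^*$ and $\pi_*$ are $t$-exact on $\Dqc$, the remainder of the argument is a formal consequence of Theorem~\ref{T:tame-splitting:QCoh}.
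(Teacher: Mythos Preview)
Your argument is correct and takes a genuinely different route from the paper's. The paper deduces the two-sided orthogonality in \itemref{TI:tame-gerbe-derived:orthogonal} from the preceding lemma, which establishes the local vanishing $\pi_*\sheafExt^i(\sF,\sG)=\pi_*\sheafExt^i(\sG,\sF)=0$ for $\sF$ trivial and $\sG$ purely non-trivial by exhibiting explicit projective generators: $\sO_X$ for $\QCoh_\triv(X)$ and the bundle $Q=(s_*\sO_S)_\nt$ for $\QCoh_\nt(X)$. Your approach instead identifies the structural reason behind the orthogonality: for a finite tame gerbe, $\pi^*$ and $\pi_*$ form a \emph{Frobenius} (biadjoint) pair, because linear reductivity makes the coinvariants functor (the left adjoint of $\pi^*$) naturally isomorphic to the invariants functor $\pi_*$ via the Reynolds operator. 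This immediately gives both $\Rd\Hom(\pi^*A,\sG)=0$ and $\Rd\Hom(\sG,\pi^*A)=0$ for $\sG\in\Dqc(X)_\nt$ by adjunction, bypassing the construction of $Q$. The paper's route yields the sharper sheaf-level statement $\pi_*\sheafExt^i=0$ (useful elsewhere), while yours isolates the ambidexterity of $\pi$ as the conceptual core and is arguably more portable to other contexts. One small point worth tightening: your globalization of the biadjunction is slightly informal; a clean way to phrase it is that the retraction $\sF\to\pi^*\pi_*\sF$ of the counit, furnished globally by Theorem~\ref{T:tame-splitting:QCoh}, already serves as the unit of the $(\pi_*,\pi^*)$-adjunction, so no descent argument is needed.
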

\begin{proof}
\ref{TI:tame-gerbe-derived:localizing} and \ref{TI:tame-gerbe-derived:triv} are
straightforward. For \ref{TI:tame-gerbe-derived:orthogonal}, we note that
$\Dqc(X)_\triv$ and $\Dqc(X)_\nt$ generate $\Dqc(X)$ since $\sF_\nt\in
\Dqc(X)_\nt$. The result thus follows from the lemma above.
\end{proof}

We will also have use of the following standard lifting result.

\begin{lemma}\label{L:deform-bundle-tame-gerbe}
Let $X\to S$ be a tame gerbe.
Let $s\in |S|$ be a point and let $\sE_0\in \VB(X_s)$ be a locally free sheaf
of finite rank on the fiber $X_s$. Then there is an \'etale neighborhood $U\to S$ of $s$ and a vector bundle
$\sE\in \VB(X\times_S U)$ such that $\sE|_s\cong \sE_0$. In addition, if
$\sE_0$ has trivial (resp.\ purely non-trivial) action, then so does $\sE$.
\end{lemma}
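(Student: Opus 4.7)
The plan is to construct $\sE$ by formally deforming $\sE_0$ and then algebraizing; the statements about trivial or purely non-trivial actions will follow from the compatibility of the canonical splitting with base change, as established in Theorem~\ref{T:tame-splitting:QCoh}.

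First I would reduce to the case where $S=\Spec R$ is affine and, by a standard spreading-out argument, noetherian. Let $\mathfrak{m}\subseteq R$ be the maximal ideal at $s$, set $\kappa=R/\mathfrak{m}$, $R_n=R/\mathfrak{m}^{n+1}$ and $X_n=X\times_S\Spec R_n$. I would then construct compatible lifts $\sE_n\in\VB(X_n)$ of $\sE_0$ by induction on~$n$. By standard deformation theory, the obstruction to extending $\sE_n$ to $\sE_{n+1}$ lies in $\Ext^2_{X_s}(\sE_0,\sE_0\otimes_\kappa V)$ with $V=\mathfrak{m}^{n+1}/\mathfrak{m}^{n+2}$, and the set of extensions, when non-empty, is a torsor under $\Ext^1_{X_s}(\sE_0,\sE_0\otimes_\kappa V)$. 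Since $\sE_0$ is locally free we have $\sheafExt^i(\sE_0,\sE_0)=0$ for $i>0$, so these Ext groups reduce to $H^i\bigl(X_s,\sheafHom(\sE_0,\sE_0)\otimes_\kappa V\bigr)$. Because $X_s\to\Spec\kappa$ is a finite tame gerbe, higher direct images along the structure map vanish on quasi-coherent sheaves, and hence $H^i(X_s,-)=0$ for $i>0$. Thus both obstruction and ambiguity vanish, and the lifts $\sE_n$ exist for all~$n$.

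The main obstacle is algebraization of the resulting formal vector bundle $\{\sE_n\}$ to a vector bundle over an \'etale neighborhood of~$s$. I would handle this via Artin approximation applied to the stack of rank-$r$ vector bundles on $X$ relative to~$S$ (with $r=\rk\sE_0$), which is algebraic and locally of finite presentation over~$S$. The compatible system $\{\sE_n\}$ provides a point of this stack with values in $\Spec\widehat{R}$, and Artin approximation produces an \'etale neighborhood $(U,u)\to(S,s)$ together with a vector bundle $\sE\in\VB(X\times_S U)$ whose restriction to $X_s$ is isomorphic to $\sE_0$.

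For the addendum, Theorem~\ref{T:tame-splitting:QCoh}~(iv) provides a canonical splitting $\sE=\sE_\triv\oplus\sE_\nt$ on $X\times_S U$, and by part~(iii) this splitting is preserved by restriction to the fibre $X_s$. If $\sE_0$ has trivial action then $(\sE_\nt)|_{X_s}=0$; since $\sE_\nt$ is locally free its rank is locally constant on $|X\times_S U|$, so $\sE_\nt$ vanishes on the open union of all connected components meeting $X_s$. As $X\to S$ is universally closed (finite tame gerbes being proper over their coarse spaces), the image in $U$ of the complement is closed, so after shrinking $U$ we may arrange that $\sE_\nt=0$, giving $\sE=\sE_\triv$. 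The purely non-trivial case is entirely symmetric.
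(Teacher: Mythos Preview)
Your proof follows essentially the same strategy as the paper's: lift $\sE_0$ through the infinitesimal thickenings using vanishing of obstructions on the cohomologically affine gerbe $X_s$, then algebraize. The paper orders the steps slightly differently---it first reduces to $(S,s)$ complete local (via limit arguments and Artin approximation) and then invokes Grothendieck's existence theorem for Artin stacks~\cite{olsson_proper-coverings} to algebraize the formal system directly---whereas you package the algebraization by appealing to algebraicity of the relative moduli stack of vector bundles together with Artin approximation.

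Two points deserve a little more care. First, the sentence ``the compatible system $\{\sE_n\}$ provides a point of this stack with values in $\Spec\widehat{R}$'' hides a nontrivial step: passing from a formal system to an honest object over $\widehat{R}$ is exactly effectivity, i.e.\ Grothendieck existence for the proper morphism $X\to S$, and this is what underlies algebraicity of the moduli stack. Second, Artin approximation requires the base ring to be excellent (or at least a G-ring), so ``noetherian'' alone is not quite enough; the paper handles this by reducing to the henselization of something of finite type over $\Spec\ZZ$. Finally, for the addendum the paper takes a shorter route: since $\triv$ and $\nt$ commute with base change and preserve vector bundles on a tame gerbe, one may simply replace $\sE$ by $\sE_\triv$ (resp.\ $\sE_\nt$), avoiding the shrinking argument entirely.
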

\begin{proof}
By a simple limit argument, we may 
assume that $(S,s)$ is local henselian. By another limit argument, we may
assume that $(S,s)$ is the henselization of a scheme of finite type over $\Spec
\ZZ$. In particular, we may assume that $S$ is excellent. Using Artin
approximation~\cite[Theorem~1.12]{artin_alg_approximation}, we may further assume that $(S,s)$ is complete local.

Let $S_n=\Spec A/\im^{n+1}$ be the $n$th infinitesimal neighborhood of $s$ in
$S=\Spec A$. Then the obstruction to lifting a vector bundle $\sE_{n-1}\in
\VB(X\times_S S_{n-1})$ to a vector bundle $\sE_n\in \VB(X\times_S S_n)$
lies in $\Ext^2(\sE_{n-1},\sE_{n-1}\otimes
\im^n/\im^{n+1})=\H^2(X_s,\sE_{n-1}^\vee\otimes \sE_{n-1}\otimes
\im^n/\im^{n+1})$ which vanishes since $X_s$ is cohomologically affine.
We can thus find a compatible system of vector bundles $(\sE_n)_{n\geq
  0}$ on $(X\times_S S_n)_{n\geq 0}$. Since $X\to S$ is proper, it follows by
Grothendieck's existence theorem for Artin
stacks~\cite{olsson_proper-coverings} that there exists a vector bundle $\sE\in
\VB(X)$ as required.

For the final claim, simply note that the splitting into trivial and
non-trivial parts commutes with arbitrary base change.
\end{proof}

The lemma also holds when $X\to S$ is the coarse space of a tame stack, although
without additional finiteness assumptions, the limit arguments become a little
more involved.

\end{section}


\begin{section}{The cotangent complex for a group of multiplicative type}
\label{app-mult-type}
In this appendix, we compute the cotangent complex for a group of multiplicative type
and for its classifying stack.
Our primary goal is to show the vanishing in $\K_0(\B G)$ of the the class of the cotangent complex $L_{\B G/k}$
of the classifying stack of a finite tame group scheme $G$ over an algebraically closed field~$k$.

Throughout the section, we fix a base algebraic stack~$S$.
It will be convenient to make the following definition.
\begin{definition}
\label{def-alg-group}
Let $S$ be an algebraic stack.
A \emph{group space} is a group object in the category of algebraic stacks over~$S$ whose
structure morphism is representable by algebraic spaces.
We say that a group space $\pi\colon G \to S$ is an \emph{algebraic group} if $\pi$ is flat and locally
of finite presentation.
\end{definition}
Note that a group space $G$ over~$S$ is algebraic if and only if the classifying stack $\B G$ is algebraic.

We start with a general discussion on cotangent complexes of algebraic groups and their classifying stacks.
Given a group space $G$ over~$S$, we consider the complex
$\Ld e^* L_{G/S}$,
where $e$ is the unit of $G$ and~$L_{G/S}$ denotes the cotangent complex.
The association
\begin{equation}
\label{eq-colie-funct}
G \mapsto C(G) := \Ld e^* L_{G/S},
\end{equation}
gives a contravariant functor from the category of group spaces
to the derived category~$\Dqc(S)$.

\begin{remark}
Note that the complex $C(G)$ generalizes the Lie algebra $\mathfrak{g}$,
which may be recovered as $\mathcal{H}^0(C(G)^\vee)$.
Also note that an exact sequence of algebraic groups
\begin{equation}
\label{eq-exact-groups}
1 \to G' \xrightarrow{f} G \xrightarrow{g} G'' \to 1
\end{equation}
induces a distinguished triangle
\begin{equation}
C(G'') \to C(G) \to C(G') \to
\end{equation}
generalizing the exact sequence of Lie algebras obtained from
\eqref{eq-exact-groups} in the case when the groups in question are smooth.
\end{remark}

\begin{proposition}
\label{prop-cotang-group}
Let $S$ be an algebraic stack and let $\pi\colon G \to S$ be an algebraic group over~$S$.
Then $C(G)$, as defined in \eqref{eq-colie-funct}, is perfect with tor-amplitude in $[-1, 0]$,
and we have
\begin{equation}
\label{eq-cotang-group}
L_{G/S} \cong \pi^*C(G).
\end{equation}
Denote the structure morphism of the classifying stack of $G$ by $\psi\colon \B G \to S$,
and let $\iota\colon I_{\B G/S} \to \B G$ be the relative inertia stack of $\psi$.
Then
\begin{equation}
\label{eq-cotang-classifying}
L_{\B G/S} \cong C(I_{\B G/S})[-1].
\end{equation}
Furthermore, 
in the special case when $G$ is abelian,
we have $L_{\B G/S} \cong \psi^*C(G)[-1]$.
\end{proposition}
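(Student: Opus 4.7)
The plan is to leverage the fppf presentation $\sigma\colon S \to \B G$ corresponding to the trivial $G$-torsor. This morphism is flat and locally of finite presentation (it is itself a $G$-torsor) and fits into a $2$-cartesian square whose fiber product is $G$; this square is tor-independent and will supply all the base change identifications needed for the cotangent complex.

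First I would prove (2) by applying tor-independent base change to the square with $G = S\times_{\B G}S$, in which both projections are identified with $\pi$. This gives $L_{G/S}\cong \pi^* L_{S/\B G}$, and pulling back along the unit $e\colon S\to G$ (a section of $\pi$) identifies $L_{S/\B G}$ with $C(G)$, proving $L_{G/S} \cong \pi^*C(G)$. Statement (1) then follows immediately, since $\sigma$ is flat and locally of finite presentation, so $L_{S/\B G}\cong C(G)$ is perfect with tor-amplitude $[-1,0]$.

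For (3), I would use the $2$-cartesian square exhibiting the relative inertia $I := I_{\B G/S}$ as the fiber product of the diagonal $\Delta\colon \B G\to \B G\times_S \B G$ with itself. Tor-independent base change along $\Delta$ yields $L_{I/\B G}\cong p^* L_{\B G/\B G\times_S\B G}$, where $p\colon I\to \B G$ is the projection compatible with the group structure. The fundamental triangle associated to the section $\Delta$ of the second projection $p_2\colon \B G\times_S\B G\to \B G$, combined with base change for $p_2$, identifies $L_{\B G/\B G\times_S\B G}$ with $L_{\B G/S}[1]$. Pulling back along the unit section $e_I\colon \B G\to I$ (which splits $p$) then produces $C(I)\cong L_{\B G/S}[1]$, that is, $L_{\B G/S}\cong C(I)[-1]$.

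Finally, for (4), when $G$ is abelian the conjugation action on $G$ is trivial, so $I_{\B G/S}$ is canonically isomorphic to $\psi^*G = G\times_S \B G$ as a group space over $\B G$. Since the formation of $C(-)$ commutes with base change of group spaces (by (2) together with tor-independent base change), $C(I_{\B G/S})\cong \psi^*C(G)$, and (4) follows from (3). The main obstacle will be keeping the two projections and the various unit sections in the inertia square straight so that the shift in (3) has the correct sign; everything else reduces to standard base change and manipulations of fundamental triangles.
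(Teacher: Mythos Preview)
Your approach is essentially the same as the paper's: both arguments use the tor-independent cartesian square $G \cong S\times_{\B G} S$ for \eqref{eq-cotang-group}, the analogous inertia square $I_{\B G/S}\cong \B G\times_{\B G\times_S \B G}\B G$ for \eqref{eq-cotang-classifying}, and the identification $I_{\B G/S}\cong \psi^*G$ in the abelian case. The only cosmetic difference in (3) is that you derive the shift $L_{\B G/S}\cong L_{\Delta_\psi}[-1]$ by hand from the fundamental triangle for $p_2\circ\Delta=\id$, whereas the paper cites \cite[Proposition~17.8]{laumon} directly.

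There is, however, a genuine gap in your justification of (1). You assert that $L_{S/\B G}\cong C(G)$ is perfect with tor-amplitude in $[-1,0]$ ``since $\sigma$ is flat and locally of finite presentation''. This inference is false in general: an fppf morphism need not have perfect cotangent complex of amplitude $[-1,0]$ (for instance $k\to k[x,y]/(x^2,xy,y^2)$ over a field $k$ is flat and of finite presentation but not lci). The paper closes this gap differently: it first uses the fundamental triangle for $\psi\circ\tau=\id_S$ to obtain $\tau^*L_{\B G/S}\cong L_{S/\B G}[-1]$, and then invokes the smoothness of $\psi\colon \B G\to S$ together with \cite[Proposition~17.10]{laumon} to conclude that $L_{\B G/S}$ is perfect with tor-amplitude in $[0,1]$, whence $C(G)$ has tor-amplitude in $[-1,0]$. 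If you wish to argue directly via $\sigma$, you must supply the additional input that the $G$-torsor $\sigma$ is syntomic (equivalently, that flat lfp group spaces are automatically lci), which is true but not immediate.
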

\begin{proof}
Let $\tau\colon S\to \B G$ denote the universal $G$-torsor.
Using tor-independent base change applied to the cartesian square
\[
\xymatrix{
G \ar[r]^\pi\ar[d]_\pi & S \ar[d]^\tau\\
S \ar[r]_-\tau & \B G,\\
}
\]
we see that $L_{G/S} \cong \pi^* L_{S/\B G}$.
Since the unit map $e$ of $G$ is a section of $\pi$,
this gives $L_{S/\B G} \cong \Ld e^*\pi^* L_{S/\B G} \cong C(G)$,
from which \eqref{eq-cotang-group} follows.

By applying the fundamental triangle to the composition~$\psi\circ \tau = \id_S$,
we see that $\tau^*L_{\B G /S} \cong L_{S/\B G}[-1]$.
In particular, since $\psi$ is smooth, the complex $L_{\B G/S}$ is perfect with tor
amplitude in $[0, 1]$, from which the statement about the perfectness and the tor-amplitude
of $C(G)$ follows.

To prove \eqref{eq-cotang-classifying},
we apply the same argument as above to the cartesian square
$$
\xymatrix{
I_{\B G/S} \ar[r]^\iota\ar[d]_\iota & \B G \ar[d]^{\Delta_\psi}\\
\B G \ar[r]_-{\Delta_\psi} & \B G\times_S \B G,\\
}
$$
to get $L_{\Delta_{\psi}}\cong C(I_{\B G/S})$.
Since $\psi$ is flat, the cotangent complex of $\psi$ is the shift of the cotangent complex for the
diagonal morphism $\Delta_\psi$
by~$-1$ (see \cite[Proposition~17.8]{laumon}),
from which the result follows.

Finally, recall that the inertia stack of $\psi$ is isomorphic to the stack quotient $[G/G]$,
where $G$ acts on itself by conjugation.
In particular, if $G$ is abelian then $I_{\B G/S}$ is just the base change of $G$ along $\psi$.
Hence $C(I_{\B G/S}) \cong \psi^*C(G)$ by tor independent base change,
from which the last statement follows.
\end{proof}

Recall that a group space $G$ over~$S$ is of \emph{multiplicative type}
if it is isomorphic to the \emph{Cartier dual} $D(A) := \sheafHom(A, \Gm)$
of a locally constant sheaf of abelian groups  $A$ over~$S$.
A group of multiplicative type over~$S$ is algebraic in the sense of Definition~\ref{def-alg-group}
if and only if it is of finite type, which corresponds to $A$ being of finite type.
Denote the constant sheaf of rings associated to~$\ZZ$ by~$\ZZ_S$.

\begin{proposition}
\label{prop-cotang-mult-type}
Let $S$ be an algebraic stack.
Then the restriction of the functor $G \mapsto C(G)$,
as in \eqref{eq-colie-funct},
to the category of groups of multiplicative type over~$S$ is isomorphic to
$
\Delta \mapsto \mathcal{O}_S\otimes^\Ld_{\ZZ_S} D(\Delta).
$
\end{proposition}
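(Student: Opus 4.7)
The plan is to identify the two functors on tori, establish that both are exact on the category of groups of multiplicative type, and reduce the general case via free resolutions of character groups.

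For the base case $\Delta = \Gm^n$, smoothness gives $L_{\Gm^n/S} = \Omega^1_{\Gm^n/S}$, free on the invariant forms $dt_i/t_i$, so $C(\Gm^n) = \mathcal{O}_S^n$; on the other side $D(\Gm^n) = \ZZ^n$, whence $\mathcal{O}_S \otimes^\Ld_{\ZZ_S} D(\Gm^n) = \mathcal{O}_S^n$. A morphism $\Gm^n \to \Gm^r$ is determined by $r$ monomials in $n$ variables, and a direct computation of pullbacks of the $dt_i/t_i$ shows that the induced map on $C$'s coincides with the $\mathcal{O}_S$-linearization of the Cartier-dual map $D(\Gm^r) \to D(\Gm^n)$; this produces a natural isomorphism on the full subcategory of tori.

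Next, I would show that both sides convert a short exact sequence $1 \to \Delta' \to \Delta \to \Delta'' \to 1$ of multiplicative-type groups into a distinguished triangle. For $C$ this follows from the fundamental triangle for $\Delta \to \Delta'' \to S$ pulled back along the unit section of $\Delta$, combined with tor-independent base change: since $\Delta \to \Delta''$ is flat and $\Delta' \hookrightarrow \Delta$ is its fiber over the unit of $\Delta''$, the pullback of $L_{\Delta/\Delta''}$ along the unit of $\Delta$ is canonically $C(\Delta')$. For the right-hand side, Cartier duality is an exact anti-equivalence between multiplicative-type groups and finitely generated locally constant abelian sheaves, and $\mathcal{O}_S \otimes^\Ld_{\ZZ_S}(-)$ preserves triangles.

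Combining these, the general case is reduced as follows. After fpqc base change we may assume $A := D(\Delta)$ is a constant finitely generated abelian group; since $\ZZ$ has global dimension one we pick a free resolution $0 \to \ZZ^r \to \ZZ^n \to A \to 0$, whose Cartier dual is an exact sequence $1 \to \Delta \to \Gm^n \to \Gm^r \to 1$. Applying the two functors yields distinguished triangles whose first two terms are canonically identified via the base case and whose connecting morphisms agree by the naturality statement there; comparing cones gives $C(\Delta) \cong \mathcal{O}_S \otimes^\Ld_{\ZZ_S} D(\Delta)$. The main obstacle is to ensure that the isomorphism thus constructed is truly natural in $\Delta$ and independent of the chosen resolution, so that it descends along the fpqc covering used to trivialize $A$ to a global natural transformation; this is handled by the standard argument that two resolutions may be compared via a common refinement, with exactness forcing the two resulting isomorphisms to coincide.
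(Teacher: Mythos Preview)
Your strategy differs from the paper's, and the point you yourself flag as ``the main obstacle'' is a genuine gap as written. The paper constructs the natural isomorphism directly and globally, without choosing resolutions or passing to a cover: for any locally constant abelian sheaf $A$ it writes down the map
\[
\theta_A\colon A \longrightarrow \ZZ_S \otimes_{\ZZ_S[A]} \Omega_{\ZZ_S[A]/\ZZ_S}, \qquad \alpha \longmapsto 1 \otimes dx^\alpha,
\]
and checks by hand from the Leibniz rule that this is an isomorphism of abelian sheaves. It then observes that the natural transformation $\ZZ_S \otimes^\Ld_{\ZZ_S[A]} L_{\ZZ_S[A]/\ZZ_S} \to \ZZ_S \otimes_{\ZZ_S[A]} \Omega_{\ZZ_S[A]/\ZZ_S}$ induced by $\mathcal{H}^0$ is an isomorphism; a free resolution of $A$ is used here, but only to verify that an \emph{already-defined} natural map is a quasi-isomorphism. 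Derived base change along $\ZZ_S\to\mathcal{O}_S$ and precomposition with $D$ finish the argument. Naturality is built in from the outset, and no descent is needed.

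Your route instead tries to \emph{define} the isomorphism via a comparison of cones, and this is where the ``standard argument'' you invoke does not apply without further work. In a triangulated category the induced map on cones of a commuting square is not unique: the ambiguity is a torsor under $\Hom\bigl(C(\Delta),C(\Gm^r)\bigr)$, which is typically nonzero (take $\Delta=\Gmu_p$ over a base of characteristic $p$). The common-refinement argument for independence of resolution works in an abelian setting but does not force two such cone comparisons to coincide in $\Dqc(S)$, and the same non-uniqueness obstructs the cocycle condition needed for your fpqc descent. Your approach can be salvaged by working with explicit two-term complexes rather than their images in the derived category, or---more efficiently---by first producing a natural map and only then using resolutions to check it is an isomorphism, which is exactly what the paper does.
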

\begin{proof}
Let $A$ be a locally constant sheaf of (not necessarily finitely generated) abelian groups,
whose group operation we denote additively.
We denote the elements of the group algebra $\mathbb{Z}_S[A]$ as formal linear combinations
of the symbols $x^\alpha$ for $\alpha \in A$,
subject to the relations $x^{\alpha + \beta} = x^\alpha x^\beta$ and $x^0 = 1$,
and we let
$$
\varepsilon_A\colon \mathbb{Z}_S[A] \to \mathbb{Z}_S, \qquad x^\alpha \mapsto 1, \qquad \alpha \in A
$$
denote the augmentation map.

Consider the endofunctor
$$
\Theta \colon A \mapsto \ZZ_S\otimes_{\ZZ_S[A]} \Omega_{\ZZ_S[A]/\ZZ_S}
$$
on the category of $\ZZ_S$-modules.
This functor acts on morphisms $\varphi\colon A \to B$ by taking the base change of
$$
\Omega_{\ZZ_S[A]/\ZZ_S} \to \Omega_{\ZZ_S[B]/\ZZ_S}, \qquad dx^\alpha \mapsto dx^{\varphi(\alpha)}, \qquad \alpha \in A,
$$
along the augmentation maps, 
which makes sense since the diagram
$$
\xymatrix{
\ZZ_S[A] \ar[drr]_{\varepsilon_A}\ar[rr]^{\ZZ_S[\varphi]} & & \ZZ_S[B] \ar[d]^{\varepsilon_B} \\
& &\ZZ_S
}
$$
commutes.
For each $A$, we also have a morphism of sheaves
$\theta_A\colon A \to \Theta(A)$
defined by taking $\alpha$ to $1\otimes dx^\alpha$,
which is natural in $A$.
Since $\ZZ_S\otimes_{\ZZ_S[A]} \Omega_{\ZZ_S[A]/\ZZ_S}$ is generated over
$\ZZ_S$ by the symbols $1\otimes dx^\alpha$ for $\alpha \in A$,
subject to the relations
$$
1\otimes dx^0 = 0,
\qquad
1\otimes d(x^{\alpha + \beta})
= 1\otimes(x^\beta dx^\alpha + x^\alpha dx^\beta)
= 1\otimes dx^\alpha + 1\otimes dx^\beta,
$$
we see that the map $\theta_A$ is in fact a group isomorphism.

Now note that the natural morphism $\mathcal{H}^0\colon L_{\ZZ_S[A]/\ZZ_S} \to \Omega_{\ZZ_S[A]/\ZZ_S}$
induces a natural transformation from the functor
$$
\Theta'\colon A \mapsto \ZZ_S \otimes^\Ld_{\ZZ_S[A]}L_{\ZZ_S[A]/\ZZ_S}
$$
to $\Theta$, if we regard the latter functor as taking values in the derived category.
Also note that both $\Theta'$ and $\Theta$ are exact and that the natural morphism $\Theta' \to \Theta$
is an isomorphism when evaluated on locally free sheaves $F$.
Hence it is actually an isomorphism for all $A$, since any locally constant sheaf of abelian groups
fits into an exact sequence
$$
0 \to F' \to F \to A \to 0
$$
with $F'$ and $F$ locally free.
The cotangent complex respects derived tor-independent base change,
so the functor
\begin{equation}
\label{eq-cotang}
A \mapsto \mathcal{O}_S \otimes^\Ld_{\mathcal{O}_S[A]}L_{\mathcal{O}_S[A]/\mathcal{O}_S}
\end{equation}
is isomorphic to $\mathcal{O}_S\otimes^{\Ld}_{\ZZ_S}\Theta'$.
Here the right hand side of \eqref{eq-cotang} is naturally isomorphic to
$\Ld e_{D(A)}^*L_{D(A)/S}$ since groups of multiplicative type are affine and the
augmentation map corresponds to the unit.
From this, we get the desired isomorphism by pre-composing \eqref{eq-cotang} with $D(-)$
and using that groups of multiplicative type are reflexive with respect to the Cartier dual.
\end{proof}

\begin{remark}
It is interesting to note that there is a moduli stack of groups of multiplicative type.
Let
$\operatorname{Core}(\operatorname{Ab}_*^{\operatorname{fg}}) \to \operatorname{Core}(\operatorname{Ab}^{\operatorname{fg}})$ be the forgetful functor from the groupoid of pointed f.g.\ abelian groups to the
groupoid of f.g.\ abelian groups.
Denote the associated morphism of constant stacks over $\Spec \ZZ$ by
$\mathcal{A} \to \mathcal{M}$.
Note that these stacks are algebraic but neither quasi-compact nor quasi-separated.
In fact, since they are constant, they are \'etale Deligne--Mumford stacks over $\Spec \ZZ$.
The stack $\mathcal{M}$ can be regarded as the moduli stack of locally constant f.g.\ abelian groups,
with $\mathcal{A}$ being the universal such group.
By Cartier duality, $\mathcal{M}$ may also be considered as the moduli stack of groups of multiplicative type,
with the universal such group given by the Cartier dual $D(\mathcal{A})$.
Denote the structure morphism by $\pi\colon D(\mathcal{A}) \to \mathcal{M}$.
Since $\pi$ is flat, the cotangent complex of any group of multiplicative type
$G \to S$ over some algebraic stack $S$ is given as the derived pull-back of
$L_{D(\mathcal{A})/\mathcal{M}}$ along the canonical map $G \to D(\mathcal{A})$.
\end{remark}

\begin{proposition}
\label{prop-const-elementary-tor}
Let $A$ be a finite abelian group endowed with an action by a finite group~$H$,
and let $k$ be an arbitrary field.
Then $\Tor_0^{\ZZ}(k, A)$
and~$\Tor_1^{\ZZ}(k, A)$
are isomorphic as $H$-representations provided that all elementary divisors of $A$
divisible by the characteristic of~$k$ are equal.
\end{proposition}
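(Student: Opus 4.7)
The plan is to reduce to a direct computation by two simplifications and then exhibit an explicit $H$-equivariant isomorphism $A/pA \cong A[p]$. First, if $\kar k = 0$, then $k$ is torsion-free, hence flat over $\ZZ$, so $\Tor_1^\ZZ(k,A) = 0$; moreover $k \otimes_\ZZ A = 0$ since $A$ is finite. Both sides vanish and the claim is trivial. Assume therefore $\kar k = p > 0$. The primary decomposition $A = A_p \oplus A_{p'}$ is canonical, hence $H$-equivariant. The summand $A_{p'}$ is annihilated by an integer invertible in $k$, so contributes zero to both $\Tor_0^\ZZ(k,-)$ and $\Tor_1^\ZZ(k,-)$. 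We may therefore assume $A = A_p$, and the hypothesis on elementary divisors then says precisely that $A$ is free of some rank $n$ as a module over $\ZZ/p^e$, with $H$-action given by a homomorphism $H \to \operatorname{GL}_n(\ZZ/p^e)$.

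Since $\ZZ/p \to k$ is a field extension, and so flat, base change for Tor yields an $H$-equivariant identification
\[
\Tor_i^\ZZ(k,A) \;\cong\; \Tor_i^\ZZ(\ZZ/p,A) \otimes_{\ZZ/p} k, \qquad i = 0, 1.
\]
Using the short free resolution $0 \to \ZZ \xrightarrow{p} \ZZ \to \ZZ/p \to 0$ one reads off $\Tor_0^\ZZ(\ZZ/p,A) = A/pA$ and $\Tor_1^\ZZ(\ZZ/p,A) = A[p]$, each with its natural $\ZZ[H]$-module structure. The problem thus reduces to producing an $H$-equivariant isomorphism $A/pA \xrightarrow{\sim} A[p]$.

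This is where the hypothesis enters and constitutes the only substantive step. Since $A$ is free over $\ZZ/p^e$ of exponent exactly $p^e$, multiplication by $p^{e-1}$ is a $\ZZ[H]$-linear endomorphism of $A$ whose kernel is $pA$ and whose image is $A[p]$; it therefore descends to the desired $H$-equivariant isomorphism $A/pA \xrightarrow{\sim} A[p]$. Had $A$ contained elementary divisors $p^{e_i}$ with differing valuations, no single power of $p$ would simultaneously cut out both $pA$ and $A[p]$, and the two representations would in general fail to be isomorphic — so the hypothesis is used in an essential way precisely here. Tensoring this isomorphism with $k$ over $\ZZ/p$ yields the required identification of $\Tor_0^\ZZ(k,A)$ with $\Tor_1^\ZZ(k,A)$.
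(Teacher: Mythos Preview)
Your proof is correct. Both you and the paper make the same reductions: to characteristic $p>0$, to the $p$-primary part, and then use the hypothesis to conclude $A\cong(\ZZ/p^e)^n$ non-equivariantly. The final step, however, is carried out differently. The paper chooses coordinates, writes an automorphism $\varphi$ of $A$ as a matrix $(\varphi_{ij})$ with $\varphi_{ij}\in\Hom(\ZZ/q,\ZZ/q)$, lifts each entry to an integer $a_{ij}$ via the resolution $0\to\ZZ\xrightarrow{q}\ZZ\to\ZZ/q\to 0$, and then checks that the induced maps on $\Tor_0$ and $\Tor_1$ are both given by the same matrix $(a_{ij}\bmod p)$. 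Your argument is coordinate-free: multiplication by $p^{e-1}$ is a natural endomorphism of $A$, hence automatically $H$-equivariant, and on a free $\ZZ/p^e$-module it has kernel $pA$ and image $A[p]$, yielding the isomorphism $A/pA\xrightarrow{\sim}A[p]$ directly. Your route is a bit cleaner since it avoids matrices and makes the role of the equal-elementary-divisors hypothesis transparent (it is exactly what makes a single power of $p$ work uniformly); the paper's route has the minor advantage of making explicit what the common action looks like.
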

\begin{proof}
The group $A$ decomposes $H$-equivariantly into $p$-groups,
so by additivity of the tor-functors we may assume that $A$ is a $p$-group for some prime~$p$.
If the characteristic of the field is not $p$, then both tor-groups vanish and the statement is trivial.
We may therefore assume that the characteristic of $k$ is $p > 0$ and that the elementary divisors are all equal,
i.e., that $A=(\ZZ/q\ZZ)^{\oplus n}$ non-equivariantly for some $p$-power $q$.
An automorphism $\varphi$ of $A$ decomposes as $(\varphi_{ij})$
with $\varphi_{ij} \in \Hom(\ZZ/q\ZZ, \ZZ/q\ZZ)$ for $i,j=1,2,\dots,n$.
For each pair $i, j$, we have a commutative diagram
$$
\xymatrix{
0 \ar[r] & 
  \ZZ \ar[r]^q\ar[d]_{a_{ij}} &
  \ZZ \ar[r]\ar[d]_{a_{ij}} &
  \ZZ/q\ZZ \ar[r]\ar[d]_{\varphi_{ij}} &
  0 \\
0 \ar[r] & 
  \ZZ \ar[r]^q &
  \ZZ \ar[r] &
  \ZZ/q\ZZ \ar[r] &
  0 \\
}
$$ 
for some $a_{ij}$, uniquely determined modulo~$q$.
By reducing modulo~$p$ and using additivity of $\Tor_i^\ZZ(k, -)$,
we see that $\Tor_d^\ZZ(k, \varphi_{ij})\colon k \to k$ is given by
multiplication with $a_{ij}$, or equivalently, by multiplication with
$\varphi_{ij}$  for $d = 0, 1$.
This holds for any automorphism~$\varphi$,
so the group actions on $\Tor_0^{\ZZ}(k, A)$
and~$\Tor_1^{\ZZ}(k, A)$ are indeed isomorphic.
\end{proof}

\begin{example}
\label{ex-wild-tor}
The following example illustrates that the assumptions on the elementary divisors in the statement of
Proposition~\ref{prop-const-elementary-tor} cannot be removed in general.
Let $k$ be a field of characteristic~3,
and let $H$ be the cyclic group of order~$3$ acting on the group
$A = \ZZ/3\ZZ \times \ZZ/9\ZZ$ by letting
$1 \in H$ act as
$$
\begin{pmatrix}
1 & 0 \\
3 & 1 \\
\end{pmatrix}
$$
Then the action on $k^2 \cong \Tor_0^\ZZ(k, A)$ is trivial whereas $1 \in H$ acts on 
$k^2 \cong \Tor_1^\ZZ(k, A)$ as
$$
\begin{pmatrix}
1 & 0 \\
1 & 1 \\
\end{pmatrix}
$$
In particular, $\Tor_0^\ZZ(k, A)$ and $\Tor_1^\ZZ(k, A)$ are not $H$-equivariantly isomorphic.
\end{example}

\begin{remark}
\label{rem-tame-tor}
The situation in Example~\ref{ex-wild-tor} cannot occur if the order of $H$ is invertible in~$k$.
That is, in this situation $\Tor_0^{\ZZ}(k, A) \cong \Tor_1^{\ZZ}(k, A)$ $H$-equivariantly
regardless of the elementary divisors of~$A$.
This is a consequence of Maschke's theorem, which implies that $A$ splits $H$-equivariantly into 
components with constant elementary divisors, to which Proposition~\ref{prop-const-elementary-tor} applies.
\end{remark}

\begin{proposition}
\label{prop-well-split-gerbe}
Let $G = \Delta \rtimes H$ be a finite group scheme over a field~$k$,
where $\Delta$ is a diagonalizable group and $H$ is a constant group.
Then the class of the cotangent complex $L_{\B G}$ is trivial in $\K_0(\B G)$.
In particular, this applies to any finite tame group scheme over an algebraically
closed field.
\end{proposition}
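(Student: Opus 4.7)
The plan is to reduce to a $K_0$-theoretic calculation of Tor groups using the cotangent complex formulas of this appendix. First, by Proposition~\ref{prop-cotang-group} applied to $\B G\to \Spec k$, we have $L_{\B G/k}\cong C(I_{\B G/k})[-1]$, where the inertia $I_{\B G/k}$ is the group space $[G/G]$ over $\B G$ under conjugation. Pulling $C(I_{\B G/k})$ back along the universal torsor $\tau\colon \Spec k\to \B G$ produces $C(G)=\Ld e^*L_{G/k}$ equipped with the adjoint $G$-action. Since $\tau^*$ detects $K_0$-classes on $\B G$ via the identification of perfect complexes with bounded complexes of $G$-representations, it suffices to show that $[C(G)]$ vanishes in $K_0(\B G)$.

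Next, I combine the fundamental triangle for $g\colon G\to H\to \Spec k$ with tor-independent base change along $e_H\colon \Spec k\to H$ (using flatness of $g$ and the cartesian identification $\Delta = G\times_H \Spec k$) to obtain a $G$-equivariant distinguished triangle
\[
C(H)\to C(G)\to C(\Delta)\to
\]
where $G$ acts by conjugation on each term. Because $H$ is constant over $k$, it is étale over $\Spec k$, so $L_{H/k}=0$ and hence $C(H)=0$; this gives a $G$-equivariant isomorphism $C(G)\cong C(\Delta)$. Since $\Delta$ is abelian and normal in $G$, conjugation restricts trivially on $\Delta$, and the resulting $G$-action on $C(\Delta)$ factors through $H$ via the given semidirect product action. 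Proposition~\ref{prop-cotang-mult-type} then identifies
\[
C(\Delta)\cong k\otimes^{\Ld}_{\ZZ} D(\Delta),
\]
where $H$ acts on the character group $D(\Delta)$ by the Cartier dual of the conjugation action.

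It remains to prove that $[\Tor_0^{\ZZ}(k,D(\Delta))] = [\Tor_1^{\ZZ}(k,D(\Delta))]$ in $K_0(\B G)$, where both sides are viewed as $H$-representations inflated along $G\to H$. I decompose $D(\Delta)$ $H$-equivariantly into its primary parts; only the $p$-primary summand with $p=\kar k$ contributes to Tor. On that summand I introduce the $H$-stable filtration by successive $p^i$-torsion, whose subquotients are $\FF_p$-vector spaces with $H$-action, and in particular have all elementary divisors equal to $p$. Proposition~\ref{prop-const-elementary-tor} then supplies an $H$-equivariant isomorphism $\Tor_0^{\ZZ}(k,Q)\cong \Tor_1^{\ZZ}(k,Q)$ on each subquotient $Q$. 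Additivity of Euler characteristics in $K_0$ along the long exact sequences of Tor produced by the filtration yields the desired equality. The ``in particular'' clause then follows from the structure theorem for finite linearly reductive group schemes over an algebraically closed field, which exhibits any such $G$ as a semidirect product $\Delta\rtimes H$ of the required form. The main obstacle I anticipate is a careful verification of $G$-equivariance for the triangle in the second step, together with the use of the filtration argument (rather than a direct Maschke-style splitting) to bypass the possible non-invertibility of $|H|$ in $k$.
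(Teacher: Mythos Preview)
Your argument is correct and follows essentially the same route as the paper: reduce to the $H$-equivariant complex $C(\Delta)\cong k\otimes^{\Ld}_{\ZZ}D(\Delta)$, then use the $p^i$-torsion filtration together with Proposition~\ref{prop-const-elementary-tor} to kill the class in $K_0$. The one packaging difference worth noting, since you flag $G$-equivariance as your main obstacle: the paper avoids tracking equivariance by hand by working directly on $\B H$, observing that $\B G\to\B H$ is the classifying stack of the abelian group $[\Delta/H]$ over $\B H$, and applying the abelian case of Proposition~\ref{prop-cotang-group} and Proposition~\ref{prop-cotang-mult-type} with base $S=\B H$; your equivariant triangle $C(H)\to C(G)\to C(\Delta)\to$ is then simply $\tau^*$ of the fundamental triangle for $\B G\to\B H\to\Spec k$, so the equivariance is automatic.
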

\begin{proof}
Assume that $\Delta = D(A)$, where $A$ is a constant finite abelian group.
The structure morphism of the gerbe $\B G$ factors as $\B G \xrightarrow{\psi} \B H \to \Spec k$.
Since $H$ is \'etale, the cotangent complex $L_{\B H}$ vanishes,
so $L_{\B G} \cong L_{\B G/\B H}$.
By assumption, the sequence $1 \to \Delta \to G \to H \to 1$ is split,
so the morphism $\B G \to \B H$ is the classifying stack of $[\Delta/ H]$ viewed as
a group scheme over $\B H$.
In particular, we have $L_{\B G} \cong \psi^*(\sO_{\B H}\otimes^\Ld_{\ZZ_{\B H}}[A/H])[-1]$
by Proposition~\ref{prop-cotang-group} combined with Proposition~\ref{prop-cotang-mult-type}.
Thus it suffices to show that the image of the class of $[A/H]$ under the map
$\K_0(\ZZ_{\B H}) \to \K_0(\B H)$,
induced by the derived base change along $\ZZ_{\B H} \to \sO_{\B H}$, vanishes.

It is clear that $A$ decomposes $H$-equivariantly into $p$-groups for various primes~$p$.
Hence we may just as well assume that $A$ has prime power exponent $q$.
Let $A' \subset A$ be the subgroup of elements of order less than~$q$.
Then all elementary divisors of $A'' := A/A'$ are equal to~$p$.
In particular, it follows from Proposition~\ref{prop-const-elementary-tor}
that the class of $[A''/H]$ maps to zero in $\K_0(\B H)$.
By using additivity of $\K$-groups over short exact sequences together with induction on~$q$,
we see that also the class of $[A/H]$ maps to zero in $\K_0(\B H)$,
which concludes the proof.
\end{proof}


\end{section}

\bibliography{destackification}
\bibliographystyle{dary}

\end{document}